\newcommand{\be}{\beta}
\newcommand{\de}{\delta}
\newcommand{\al}{\alpha}
\newcommand{\e}{\varepsilon}
\newcommand{\si}{\sigma}
\newcommand{\Si}{\Sigma}
\newcommand{\m}{\mathfrak{m}}
\newcommand{\ov}{\overline}
\newcommand{\p}{\bm p}
\newtheorem{lemma}{Lemma}[section]
\newtheorem{prop}[lemma]{Proposition}
\newtheorem{thm}[lemma]{Theorem}
\newtheorem{cor}[lemma]{Corollary}
\theoremstyle{definition}
\newtheorem{Def}[lemma]{Definition}
\newtheorem{example}[lemma]{Example}
\theoremstyle{remark}
\newtheorem{rmk}[lemma]{Remark}
\numberwithin{equation}{section} \numberwithin{table}{section}
\title[Periodic unique beta-expansions]
{Periodic unique beta-expansions: \\
the Sharkovski\u{\i} ordering}
\author{Jean-Paul Allouche}
\author{Matthew Clarke}
\author{Nikita Sidorov}
\address{CNRS, LRI, UMR 8623, Universit\'e Paris-Sud,  B\^atiment 490,
F-91405 Orsay Cedex, France. Email: allouche@lri.fr}
\address{Mathematical Institute, University of Oxford,
24-29 St Giles', Oxford, OX1 3LB, United Kingdom. Email:
matthew.clarke@maths.ox.ac.uk}
\address{School of Mathematics, The University of Manchester,  Oxford Road, Manchester M13 9PL, United Kingdom. E-mail:
sidorov@manchester.ac.uk}
\date{\today}
\dedicatory{To O.~M.~Sharkovski\u{\i} on the occasion of his 70th birthday}
\subjclass[2000]{37B10, 11A63, 11B85, 68R15} \keywords{Beta-expansion,
univoque numbers, iteration of continuous functions, Sharkovski\u{\i}'s ordering, Thue-Morse sequence.}
\begin{document}

\begin{abstract}
Let $\be\in(1,2)$. Each $x\in[0,\frac{1}{\be-1}]$ can be represented
in the form
\[
x=\sum_{k=1}^\infty \e_k\be^{-k},
\]
where $\e_k\in\{0,1\}$ for all $k$ (a $\be$-expansion of $x$). If
$\be>\frac{1+\sqrt5}{2}$, then, as is well known, there always exist
$x\in(0,\frac1{\be-1})$ which have a unique $\be$-expansion.

In the present paper we study (purely) periodic unique
$\be$-expansions and show that for each $n\ge2$ there exists
$\be_n\in[\frac{1+\sqrt5}{2},2)$ such that there are no unique
periodic $\be$-expansions of smallest period $n$ for $\be\le\be_n$
and at least one such expansion for $\be>\be_n$.

Furthermore, we prove that $\be_k<\be_m$ if and only if $k$ is less
than $m$ in the sense of the Sharkovski\u{\i} ordering. We give two
proofs of this result, one of which is independent, and the other
one links it to the dynamics of a family of trapezoidal maps.

\end{abstract}

\maketitle

\section{History of the problem and formulation of results}
\label{sec:intro}

This paper continues the line of research related to the
combinatorics of representations of real numbers in non-integer
bases (\cite{EJK, GS, KV, S}).

Let $\be\in(1,2)$ be our parameter and let $x\in
I_\be:=[0,\frac{1}{\be-1}]$. Then $x$ has at least one
representation of the form
\begin{equation}\label{eq:beta-exp}
x=\pi_\be(\e_1,\e_2,\dots):=\sum_{k=1}^\infty \e_k\be^{-k},\quad
\e_k\in\{0,1\},
\end{equation}
(use, e.g., the greedy algorithm) which we call {\em a
$\be$-expansion of $x$} and write $x\sim(\e_1,\e_2,\dots)_\be$.

Let us recall some key results regarding $\be$-expansions. Firstly,
if $1<\be<G:=\frac{1+\sqrt5}{2}$, then each $x\in
\bigl(0,\frac1{\be-1}\bigr)$ has a continuum of $\be$-expansions
\cite{EJK}. On the other hand, for any $\be>G$, there exist
infinitely many $x$ which have a unique $\be$-expansion (see
\cite{DarKat, GS}), although almost all $x\in I_\be$ still have a
continuum of $\be$-expansions \cite{S}.

More specifically, put $x\sim(010101\dots)_\be=\frac1{\be^2-1}$.
Then both $x$ and $\be x$ have a unique $\be$-expansion \cite{GS}.

Let $X_\be$ denote the set of $x$ which have a unique
$\be$-expansion (these numbers are sometimes called univoque numbers
with respect to $\be$, see \cite{DK} for example). Denote by
\[
(\m_k)_{k=0}^\infty= 0110\,\, 1001\,\, 0110 \,\, 1001\dots
\]
the Thue-Morse sequence, i.e., the fixed point of the morphism
$0\to01, 1\to10$ (see, e.g., the survey paper \cite{AS} for the many
wonderful properties of this famous sequence). Let now
$\be_{KL}\approx1.78723$ denote the {\em Komornik-Loreti constant},
i.e., the unique positive solution of the equation
\[
\sum_{k=1}^\infty \m_k x^{-k}=1.
\]
This constant proves to be the smallest $\be$ such that $1\in X_\be$
-- see \cite{KL}. Note that in \cite{AC} it was shown that
$\be_{KL}$ is transcendental.

The main result of \cite{GS} asserts that the set $X_\be$ is

\begin{enumerate}
\item infinite countable if $G<\be<\be_{KL}$;
\item a continuum of zero Hausdorff dimension if $\be=\be_{KL}$; and
\item a continuum of Hausdorff dimension strictly between 0 and 1 if
$\be_{KL}<\be<2$.
\end{enumerate}

Notice that for $\be>\be_{KL}$ the set $X_\be$ is not necessarily a
Cantor set and may have a somewhat complicated topology -- see
\cite{KV} for more detail and also for the case of an arbitrary
$\be>1$.

Let $\Si_\be$ denote the set of all 0-1 sequences which are unique
$\be$-expansions, and $\si_\be$ denote the shift on $\Si_\be$, i.e.,
$\si_\be(\e_1,\e_2,\e_3,\dots)=(\e_2,\e_3,\dots)$. Let
$\pi_\be:\Si_\be\to X_\be$ be the projection given by
(\ref{eq:beta-exp})). It is obvious that $\pi_\be$ is a bijection.

Since $\pi_\be(\{\e:\e_1=0\})\subset\bigl[0,\frac1{\be}\bigr]$ and
$\pi_\be(\{\e:\e_1=1\})\subset
\bigl[\frac1{\be(\be-1)},\frac1{\be-1}\bigr]$, the set $X_\be$ has
an empty intersection with the middle interval
$\bigl[\frac1{\be},\frac1{\be(\be-1)}\bigr]$. Thus, we have the
following commutative diagram:

\[
\begin{CD}
\Si_\be @>\si_\be>> \Si_\be \\
@VV{\pi_\be}V  @VV{\pi_\be}V \\
X_\be @>{F_\be}>> X_\be
\end{CD}
\]
where
\[
F_\be(x)=\begin{cases} \be x,& 0\le x < \frac1{\be},\\
\be x -1, & \frac1{\be(\be-1)} < x \le \frac1{\be-1}.
\end{cases}
\]
To state the main theorem of the present paper, we recall that the
{\em Sharkovski\u{\i} ordering} on the natural numbers is as
follows:

\medskip

\begin{tabular}{cccccccccccccc}

& $3$ & $\rhd$ & $5$ & $\rhd$ & $7$ & $\rhd$  & $\cdots $ & $\rhd$ &
$2m+1$
& $\rhd$ & $\cdots$\\

$\rhd$ &$2\cdot 3$ & $\rhd$ & $2\cdot 5$ & $\rhd$ & $2\cdot 7$ &
$\rhd$ &$ \cdots $ & $\rhd$ & $2\cdot (2m+1)$ & $\rhd$ & $\cdots$\\

$\rhd$ &$4\cdot 3$ & $\rhd$ & $4\cdot 5$ & $\rhd$ & $4\cdot 7$ &
$\rhd$ & $ \cdots $ & $\rhd$ & $4\cdot (2m+1)$ & $\rhd$ & $\cdots$\\

&$\vdots$&&$\vdots$&&$\vdots$&&$$&&$\vdots$\\

$\rhd$ &$2^n\cdot 3$ & $\rhd$ & $2^n\cdot 5$ & $\rhd$ & $2^n\cdot 7$
& $\rhd$  &$ \cdots $ & $\rhd$ & $2^n\cdot (2m+1)$ & $\rhd$ &
$\cdots$\\

&$\vdots$&&$\vdots$&&$\vdots$&&$$&&$\vdots$\\

  & && $\cdots$ &$\rhd$ &$8$ &$\rhd$ &$4$ &$\rhd$ &$2$ &$\rhd$& $1$,\\

\end{tabular}

\medskip\noindent
where the relation $a \; \rhd \; b$ indicates that $a$
comes before $b$ in the ordering.

\begin{rmk}
This is a complete ordering on $\mathbb{N} := \{1, 2, 3, \ldots \}$
since $(n,m) \mapsto 2^n(2m+1)$ is a bijection
$(\mathbb{N}\cup\{0\})^2 \rightarrow \mathbb{N}$.
\end{rmk}

\begin{thm}
\emph{\textbf{(Sharkovski\u{\i}'s Theorem)}} Let $f$ be a continuous
map of the real line.   If \;$k \rhd l$ in Sharkovki\u{\i}'s
ordering and if $f$ has a point of smallest period $k$, then $f$
also has a point of smallest period $l$.
\end{thm}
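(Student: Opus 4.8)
The plan is to prove Sharkovski\u{\i}'s Theorem via the combinatorics of \emph{covering relations} between intervals, which is the standard route to this classical result. For closed intervals $I,J\subset\mathbb{R}$, say that $I$ \emph{covers} $J$, written $I\to J$, if $f(I)\supseteq J$. The whole argument rests on three elementary consequences of the Intermediate Value Theorem, which I would establish first: (i) if $I\to J$, then $I$ contains a closed subinterval $K$ with $f(K)=J$; (ii) if $f(I)\supseteq I$ for a closed interval $I$, then $f$ has a fixed point in $I$; and (iii) given a loop $I_0\to I_1\to\cdots\to I_{n-1}\to I_0$, there is a point $x$ with $f^j(x)\in I_j$ for $0\le j<n$ and $f^n(x)=x$. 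Statement (iii) follows from (i) by pulling the loop back to a nested sequence of subintervals and then applying (ii).

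Next I would reduce everything to the transition graph of a single periodic orbit. Suppose $f$ has a point of smallest period $k$, and let $P=\{p_1<\cdots<p_k\}$ be its orbit. The points $p_i$ partition $[p_1,p_k]$ into the $k-1$ basic intervals $I_i=[p_i,p_{i+1}]$, and I would record all covering relations $I_i\to I_j$ in a directed graph $\Gamma$. Since $f$ permutes $P$ cyclically, elementary arguments force $\Gamma$ to contain certain loops; realizing a given period $l$ then amounts to exhibiting in $\Gamma$ a loop of length $l$ that is \emph{not} a repetition of a shorter loop and that produces, via (iii), a genuine point of smallest period $l$.

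The heart of the matter is the odd case. If the smallest period $k=m$ is odd and $m>1$, I would prove the \v{S}tefan cycle lemma: after a suitable relabelling, $\Gamma$ contains a distinguished interval $I$ carrying a self-loop $I\to I$, a primitive cycle of length $m$ through the intervals, and enough ``chords'' so that for every $l$ with $m\rhd l$ there is a loop of length exactly $l$ meeting $I$ exactly once. The single visit to $I$ is precisely what guarantees that the resulting point has smallest period exactly $l$ rather than a proper divisor of $l$. This combinatorial extraction is the main obstacle: pinning down the exact shape of $\Gamma$ for a minimal odd orbit, and verifying the minimality of the periods produced, is where all the genuine work lies.

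Finally I would handle the powers of two by passing to $g=f^2$, which is again continuous on $\mathbb{R}$. A point of smallest period $2^n\cdot q$ for $f$, with $q$ odd and $n\ge1$, has smallest period $2^{n-1}\cdot q$ for $g$, so an induction on $n$ reduces the general ordering to the odd case applied to $g$ together with the terminal chain $\cdots\rhd 8\rhd 4\rhd 2\rhd 1$. The one point requiring care here is the lifting bookkeeping: a period-$l$ point of $f^2$ yields a point of period $l$ or $2l$ for $f$, and one must track which occurs. Assembling the odd case, the doubling reduction, and this bookkeeping then yields the full Sharkovski\u{\i} ordering.
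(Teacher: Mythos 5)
First, a point of orientation: the paper does not prove this statement at all. Sharkovski\u{\i}'s theorem is quoted as a classical black box, with the proof attributed to the original paper \cite{SH} and to Devaney \cite{RD}, and the paper's own work consists in applying it (and, in Section~4, reproving the ordering for the specific family $(\Si_\be,\si_\be)$ by other means). So there is no internal proof to compare yours against; the fair comparison is with the standard proofs in the cited literature, and your outline is exactly that canonical route: the three Intermediate Value Theorem covering lemmas, the transition graph of a periodic orbit, the \v{S}tefan cycle in the odd case, and the $f^2$ doubling induction. The doubling bookkeeping you flag is handled correctly in your sketch: a point of smallest $f^2$-period $l'$ has smallest $f$-period $l'$ or $2l'$, and in fact $l'$ is only possible when $l'$ is odd (an even smallest $f$-period $d$ forces $d=2l'$), which makes the lifting induction go through with the one odd-period case recursed back into the \v{S}tefan argument.

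Two concrete points in your odd case need repair, and they sit precisely where your sketch is vaguest. First, the \v{S}tefan structure is guaranteed only for an orbit realizing the \emph{least} odd period of $f$ exceeding $1$; an arbitrary orbit of odd smallest period $k$ need not have the spiral graph, so you must either induct on the odd period or first pass to a minimal odd orbit before invoking the lemma. Second, your stated mechanism --- ``a distinguished interval $I$ carrying a self-loop \dots\ so that for every $l$ with $m\rhd l$ there is a loop of length exactly $l$ meeting $I$ exactly once'' --- is false as a uniform claim. Already for $m=3$ the graph has vertices $I,J$ with edges $I\to I$, $I\to J$, $J\to I$, and the only primitive loop of length $4$ is $I\,I\,I\,J$, which visits the self-loop vertex three times; in general a loop visiting the self-loop vertex exactly once cannot use the self-loop edge at all, which restricts the achievable lengths and parities. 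The standard repair is different: choose the loop to be \emph{primitive} (not a cyclic power of a shorter loop), observe that the point $x$ produced by your lemma (iii) then has itinerary of least period exactly $l$ unless its orbit meets the original orbit $P$, and rule out that degenerate case separately (points of $P$ have period $m$, and $m\mid l$ can be excluded or re-chosen). With those two repairs --- minimal odd orbit, and primitivity-plus-endpoint analysis in place of the single-visit criterion --- your proposal does yield the theorem, in the same way as the reference \cite{RD} the paper relies on.
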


This was originally proved in \cite{SH}, see also \cite{RD}.

Now we are ready to state the main theorem of the present paper. Put
\begin{align*}
U_n&=\{\be\in(1,2) : \Si_\be\,\,\text{contains a periodic sequence of smallest period $n$} \}.\\
&=\{\be\in(1,2) : F_\be: X_\be\to X_\be\ \ \text{has an $n$-cycle}.\}
\end{align*}
(By the result quoted above, $U_2=(G,2)$, for instance.)

\begin{thm}\label{thm:main}
There exist real numbers $\beta_n$ in $(1, 2)$ such that
$U_n=(\be_n,2)$ for any $n\ge2$. Furthermore, $\be_n<\be_m$ if and
only if $n\triangleleft m$ in the sense of the Sharkovski\u{\i}
ordering.
\end{thm}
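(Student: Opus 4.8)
The plan is to reformulate the dynamical statement combinatorially and then feed it through Sharkovski\u{\i}'s theorem. Recall that a sequence $(\e_k)\in\{0,1\}^\infty$ lies in $\Si_\be$ precisely when $\si^k(\e)\prec\al(\be)$ whenever $\e_k=0$ and $\ov{\si^k(\e)}\prec\al(\be)$ whenever $\e_k=1$, where $\al(\be)=(\al_1,\al_2,\dots)$ is the quasi-greedy expansion of $1$ in base $\be$ and $\ov{\,\cdot\,}$ is the digitwise reflection $0\leftrightarrow1$. For a periodic word $w=w_1\cdots w_n$ this collapses to the single inequality $M(w)\prec\al(\be)$, where $M(w)$ is the lexicographically largest element of $\{\si^k(w^\infty):w_k=0\}\cup\{\ov{\si^k(w^\infty)}:w_k=1\}$. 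I would first record the monotonicity $\be<\be'\Rightarrow\al(\be)\prec\al(\be')$, which shows that admissibility of a fixed primitive $w$ holds on an interval $(\be(w),2)$ that is open on the left, since at $\be=\be(w)$ one has the borderline equality $\al(\be(w))=M(w)$ and $w^\infty$ ceases to be univoque. Taking the union over the finitely many primitive words of length $n$ yields $U_n=(\be_n,2)$ with $\be_n=\min_{w}\be(w)$, attained at an \emph{extremal word} $w_n^*$ minimising $M(w)$ lexicographically; this proves the first assertion and installs the dictionary $\be_l<\be_m\iff M(w_l^*)\prec M(w_m^*)$ on which the rest of the argument rests.

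For the ordering I would pass to the dynamics. Using the symmetry $F_\be\circ R=R\circ F_\be$ with $R(x)=\tfrac1{\be-1}-x$, I would fold $F_\be$ into a continuous trapezoidal map $g_\be$ whose plateau sits over the escape interval $\bigl(\tfrac1\be,\tfrac1{\be(\be-1)}\bigr)$ on which $\be$-expansions are non-unique. The correspondence to establish is that the periodic orbits of $F_\be$ inside $X_\be$ (equivalently, the periodic univoque sequences) are exactly the periodic orbits of $g_\be$ avoiding the interior of the plateau, with periods matching up to the factor $2$ dictated by whether the orbit is $R$-symmetric. Granting this, Sharkovski\u{\i}'s Theorem applied to the continuous map $g_\be$ forces the companion periods, and these translate back through the correspondence to the presence of an $l$-cycle of $F_\be$ lying off the plateau whenever $\be\in U_m$ and $m\rhd l$. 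Thus $U_m\subseteq U_l$ and $\be_l\le\be_m$ whenever $l\triangleleft m$; equivalently, $n\mapsto\be_n$ is non-decreasing along the Sharkovski\u{\i} ordering, with the period-doubling part $\be_2<\be_4<\be_8<\cdots$ accumulating at $\be_{KL}$, the base whose $\al$ is the Thue--Morse sequence $(\m_k)$.

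The converse and, above all, the strictness are where the real difficulty lies, and I expect the passage from $\le$ to $<$ to be the main obstacle, since Sharkovski\u{\i} forcing alone can never separate two thresholds. I would handle this with the combinatorial dictionary from the first step: the task reduces to identifying the extremal words $w_n^*$ explicitly and proving the strict separation $M(w_l^*)\prec M(w_m^*)$ for every pair $l\triangleleft m$. This is the genuinely hard, self-contained part of the argument; it requires understanding the minimisers of $M$ over each period class (governed by the period-doubling/Thue--Morse combinatorics below $\be_{KL}$ and by Sharkovski\u{\i}-type blocks above it) and verifying that no two of the periodic sequences $M(w_n^*)$ coincide, so that distinct periods yield distinct bases via the strict monotonicity of $\be\mapsto\al(\be)$.

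Finally, the equivalence in the theorem follows formally once strictness is in hand. Since $\triangleleft$ is a total order and $\be$ is a genuine function of $n$, the implication $l\triangleleft m\Rightarrow\be_l<\be_m$ already yields the reverse direction: if $\be_k<\be_m$ but $k\triangleleft m$ failed, then either $k=m$, forcing $\be_k=\be_m$, or $m\triangleleft k$, forcing $\be_m<\be_k$, each a contradiction. Hence $\be_k<\be_m\iff k\triangleleft m$, completing the plan.
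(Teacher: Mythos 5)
Your Part 1 --- the lexicographic dictionary, the openness of each $(\be(w),2)$, and $U_n=(\be_n,2)$ with $\be_n$ attained at an extremal word --- is sound and is essentially the paper's Proposition~\ref{main} and Corollary~\ref{maincor}. The gaps are in the two steps that carry the actual content. In your trapezoidal step, ``apply Sharkovski\u{\i} to $g_\be$ and translate back'' does not work as stated, for two reasons that occupy most of Section~\ref{sec:trapezoidal} of the paper: (i) the cycles forced by the classical theorem may meet the plateau, where the correspondence with $F_\be$ breaks down, so one needs a plateau-avoiding version of Sharkovski\u{\i}'s theorem (Proposition~\ref{prop:lr}, proved by truncating the map at the cycle point nearest $C$); and (ii) pulling an $l$-cycle of the trapezoidal map back through the coding can produce a cycle of $F_\be$ of smallest period $2l$ rather than $l$ (exactly when the pulled-back word has the form $v\overline{v}$), and discarding this case is the hardest part of that proof --- a page of word combinatorics resting on the Lyndon--Sch\"utzenberger theorem \cite{LS}, plus a separate treatment of periods $2^k$ via \cite{GS}. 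Your phrase ``granting this'' conceals precisely these two issues. Likewise, your Part 3 is not a proof but a description of what a proof would need: the explicit identification of the extremal words and their strict separation is the heart of the paper's first proof (the period-doubling map $\mu$, Proposition~\ref{smallest}, and Proposition~\ref{shark-like} via Lemma~\ref{elementary-gamma}), and none of that combinatorics appears in your plan.

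There is also a structural misjudgment: you present Part 2 and Part 3 as if both were needed, and you locate the ``real difficulty'' in the passage from $\le$ to $<$. In fact either part alone finishes the theorem once Part 1 is in place. If Part 2 were properly executed, strictness is free from your own dictionary: $\al(\be_l)=M(w_l^*)$ and $\al(\be_m)=M(w_m^*)$ are periodic with different smallest periods, hence distinct, hence $\be_l\neq\be_m$ because a base has only one quasi-greedy expansion; combined with $\be_l\le\be_m$ from forcing, this is already strict. Conversely, if Part 3 were executed, Part 2 would be superfluous --- that combination is exactly the paper's first proof, where monotonicity of $\be\mapsto d'(\be)$ transfers the strict ordering of the sequences $a_k$ directly to the $\be_k$. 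So the genuine obstacles are the unproved steps inside Part 2 (or, alternatively, all of Part 3), not the separation of thresholds that your plan singles out as the main difficulty.
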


\begin{rmk}
We are going to give explicit formulae for the $\be_n$ via the
fragments of the Thue-Morse sequence as well as the first $n$-cycle
to appear -- see Proposition~\ref{prop:explicit} below. Note that
for $n=2^k$ this result is essentially contained in \cite{GS}, where
it is also shown that $\be_{2^k}\nearrow\be_{KL}$.
\end{rmk}

We will make use {\it inter alia} of results in combinatorics on
words. In particular a set of binary sequences studied in
\cite{A-bordeaux} and denoted by $\Gamma$ (see (\ref{eq:gamma}))
will play a r\^ole in some proofs. In this respect one could compare
the combinatorial part of the main result of \cite{GS} with the
``Th\'eor\`eme'' and ``Corollaire'' on page~37 of \cite{A-etat}.

\medskip

The structure of this paper is as follows: the next section will be
devoted to the proof of Theorem~\ref{thm:main}. In
Section~\ref{sec:extension} we discuss possible links of this claim
with the classical theory of one-dimensional continuous maps. (Note
that Reference \cite{AC2} showed a link between kneading sequences
of unimodal continuous maps and unique $\beta$-expansions of $1$.)

Our central result of that section is in the negative direction: we
show that if $\be>\be_4$, then any continuous extension $S_\be$ of
$F_\be:X_\be\to X_\be$  has a $k$-cycle for any $k\ge2$ provided
$S_\be$ is monotonic on $[0,1/\be]$.

Section~\ref{sec:trapezoidal} is devoted to a different proof of our
Theorem~\ref{thm:main} via the classical Sharkovski\u{\i} theorem
applied to a family of trapezoidal maps, and  -- again -- some
combinatorics on words and properties of the set $\Gamma$.

\section{Proof of the main theorem}\label{sec:proof}

\begin{Def}

Put $\tau_\be x=\be x\bmod 1$, and $\e_n=[\be\tau_\be^{n-1}(x)]$ for
$x\in[0,1]$ and $n\ge1$. Then $(\e_1 \e_2 \dots)$ is called the {\em
greedy expansion} of $x$ in base~$\be$. Put $\Sigma=\{0,1\}^{\mathbb
N}$ and let $d(\be) \in \Sigma$ denote the greedy expansion of $1$
in base $\be$. If $d(\be)$ is finite, i.e., of the form $d(\be) =
\e_1 \dots \e_{n-1}\, 10^\infty$, then $d'(\be) :=
(\e_1\dots\e_{n-1}\,0)^{\infty}$ - the \emph{quasi-greedy expansion
of \, $1$}.

We say that the sequence (resp.~the finite word) $\e = \e_1 \e_2
\ldots$ is {\em lexicographically less} than the sequence (resp.~the
finite word with same length) $\e'$ if $\e_k<\e'_k$ for the least
$k$ such that $\e_k\neq\e_k'$. Notation: $\e\prec\e'$. We write $\e
\preceq \e'$ if either $\e\prec\e'$ or $\e = \e'$.

\end{Def}

We will use the following simple remark.

\begin{rmk}

The relation $\preceq$ is a total order on the set of infinite
sequences (resp.~on the set of words of given length). Further
let $\e$ and $\e'$ be two infinite sequences. Let $u$ and $u'$
be their respective prefixes of length say $\ell$, then $u \prec u'$
implies $\e \prec \e'$, and $\e \preceq \e'$ implies $u \preceq u'$.

\end{rmk}

We need the following auxiliary results on greedy and quasi-greedy
expansions:

\begin{lemma} \label{lemmas}

\begin{enumerate}
\item If $d'(\be)$ is defined, then it is also an expansion of \, $1$
in base~$\be$.

\item The equation
$1 = \sum_{i=1}^{\infty}\e_j x^{-j}$, for some fixed $\e =
\e_1\e_2\e_3\dots\in \Si$, with at least one $0$ and at
least two $1$s, always has a unique solution $\be\in(1,2)$.

\item \emph{\textbf{(Monotonicity)}}
Let $\be, \tilde{\be} \in (1,2)$. Then $\be > \tilde{\be}$ if, and
only if $d(\be) \succ d(\tilde{\be})$.

\item Let $\be, \tilde{\be} \in (1,2)$
and assume that $d'(\be)$ and $d'(\tilde{\be})$ are both defined and
of the same smallest period.  Then $\be > \tilde{\be}$ if, and only
if $d'(\be) \succ d'(\tilde{\be})$.

\item Assume $\e\in\Sigma$ satisfies \[ \sigma^j \e  \left\{
\begin{array}{ll}
         \prec \e & \mbox{if $j\not\equiv 0 \bmod n$};\\
         = \e & \mbox{if $j\equiv 0 \bmod n$}. \end{array} \right.\]
\noindent Then there exists $\be\in(1,2)$ such that $d'(\be)$ is
defined and equals $\e$.
\end{enumerate}

\end{lemma}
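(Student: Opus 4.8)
The plan is to dispatch (1)--(4) quickly and concentrate on (5). For (1), writing $d(\be)=\e_1\cdots\e_{n-1}10^\infty$ gives $1=\sum_{k=1}^{n-1}\e_k\be^{-k}+\be^{-n}$, so $\sum_{k=1}^{n-1}\e_k\be^{-k}=1-\be^{-n}$, and summing the geometric series shows $\pi_\be\bigl((\e_1\cdots\e_{n-1}0)^\infty\bigr)=(1-\be^{-n})^{-1}(1-\be^{-n})=1$. For (2), the map $g(x)=\sum_{j\ge1}\e_j x^{-j}$ is continuous and strictly decreasing on $(1,2]$ (there is at least one $1$); having at least two $1$s makes $g(x)>1$ for $x$ close to $1$, while the presence of a $0$ gives $g(2)<1$, so by the intermediate value theorem there is a unique root $\be\in(1,2)$.

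Parts (3) and (4) are the classical monotonicity of the (quasi-)greedy expansion. For (3) I would compare $d(\be)$ and $d(\tilde\be)$ at their first discrepant digit, say position $k$ carrying a $1$ for the larger sequence: subtracting the two expansions of $1$ and bounding the tails by geometric series yields $\be>\tilde\be$, and conversely the greedy algorithm shows $\be\mapsto d(\be)$ is order preserving, injectivity upgrading the inequalities to strict ones. Part (4) then follows by applying the same tail estimate to the period words of $d'(\be)$ and $d'(\tilde\be)$, which, sharing a period, first differ inside a single period; alternatively one transfers (3) to the finite greedy expansions that define them.

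The heart of the lemma is (5). The hypothesis says precisely that $\sigma^n\e=\e$, so $\e=(\e_1\cdots\e_n)^\infty$, while $\sigma^j\e\prec\e$ for $1\le j\le n-1$; in particular the smallest period is exactly $n$ and $\e$ is the strict lexicographic maximum among the rotations of $w:=\e_1\cdots\e_n$. First I would show $\e_n=0$: if $\e_n=1$, comparing $\sigma^{n-1}\e=(\e_n\e_1\cdots\e_{n-1})^\infty$ with $\e$ forces $\e_2=\e_3=\cdots=1$ in order to preserve $\sigma^{n-1}\e\prec\e$, giving $\e=1^\infty$ and contradicting period $n\ge2$. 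Thus $\e=(\e_1\cdots\e_{n-1}0)^\infty$ with $\e_1=1$, and by part (2) (note $\e$ has a $0$ and infinitely many $1$s) I let $\be\in(1,2)$ be the unique solution of $\pi_\be(\e)=1$.

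It remains to identify the greedy expansion of this $\be$. Put $d=\e_1\cdots\e_{n-1}10^\infty$; reversing the computation of (1) gives $\pi_\be(d)=1$. I would then verify the greedy admissibility $\sigma^k d\prec d$ for every $k\ge1$, which (being an admissible expansion of $1$) forces $d=d(\be)$ and hence, by definition, $d'(\be)=(\e_1\cdots\e_{n-1}0)^\infty=\e$, as required. For $k\ge n-1$ this is immediate, since the tail of $d$ is eventually $0^\infty$ while $d$ begins with $1$. For $1\le k\le n-2$ I transport the inequality $\sigma^k\e\prec\e$ across the single altered digit at position $n$. The main obstacle is one dangerous configuration: if $\sigma^k\e$ and $\e$ first differed only beyond position $n-k$, the inserted $1$ of $\sigma^k d$ at position $n-k$ would overtake the entry $0=\e_{n-k}=d_{n-k}$ and yield $\sigma^k d\succ d$. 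This configuration corresponds exactly to $w$ having a border (a common proper prefix and suffix) of length $n-k$. The crux is therefore the combinatorial fact that a strictly maximal rotation is unbordered; I expect to prove this directly by comparing the two rotations $\mathrm{rot}_m(w)$ and $\mathrm{rot}_{n-m}(w)$ that share the alleged border, which forces the contradictory inequalities $\e_{m+1}\cdots\e_n\succ\e_1\cdots\e_{n-m}$ and $\e_{m+1}\cdots\e_n\prec\e_1\cdots\e_{n-m}$. This unborderedness, a hallmark of the maximal words underlying the set $\Gamma$, is exactly where the full strength of the strict hypotheses is used.
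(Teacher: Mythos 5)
Your proposal is correct and takes essentially the same route as the paper: parts (1)--(4) are handled just as there (the paper simply cites Parry for (3)), and for (5) both arguments first force $\e_n=0$ and then identify $\e_1\cdots\e_{n-1}10^\infty$ as $d(\be)$ via Parry's criterion. The paper compresses that identification into a single ``whence''; your verification that $\sigma^k d\prec d$ for all $k\ge1$ --- in particular the observation that the only dangerous case is a border of the period word, ruled out because a strictly maximal rotation is unbordered --- correctly supplies exactly the detail the paper leaves implicit.
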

\begin{proof}
(1) is trivial exercise, while (2) follows from letting
$f(x):=\sum_{i=1}^{\infty}\frac{\varepsilon_i}{x^i} - 1$ and
observing that $f(1)>0$, $f(2)<0$, and $f'(x)<0$, $\forall
x\in(1,2)$; (3) is proved in \cite{Pa} and (4) is an easy
consequence of it.

(5) follows from the fact that necessarily $\e_{pn}=0$ for all
$p\in {\mathbb{N}}$ (otherwise the condition in question is not satisfied),
whence $\e_1 \dots \e_{n-1} 10^\infty$ is the greedy expansion of 1 in base~$\be$.
\end{proof}

\begin{thm} \label{parrycor} \emph{\textbf{(Parry \cite{Pa})}}
Let $\e \in \Si$. Then $\e = d(\be)$ for some  $\be \in (1,2)$ if,
and only if, $\si^j \e \prec \e$, $\forall j \ge1$.
\end{thm}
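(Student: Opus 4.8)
The plan is to prove the two implications separately: necessity is straightforward, while sufficiency hinges on a delicate comparison between the lexicographic and numerical orders (delicate precisely because for $\be<2$ a lexicographically smaller sequence need not be numerically smaller). The workhorse I would isolate first is the elementary estimate: \emph{if $a\in\Si$ has all proper tails strictly below $1$, i.e. $\pi_\be(\si^m a)<1$ for every $m\ge1$, and if $a\prec b$, then $\pi_\be(a)<\pi_\be(b)$.} This follows by looking at the first index $m$ where $a$ and $b$ differ (so $a_m=0$, $b_m=1$) and writing $\pi_\be(b)-\pi_\be(a)=\be^{-m}\bigl(1-\pi_\be(\si^m a)+\pi_\be(\si^m b)\bigr)$, which is positive since $\pi_\be(\si^m a)<1$ and $\pi_\be(\si^m b)\ge0$.

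For necessity, suppose $\e=d(\be)$. By the definition of $\tau_\be$, every proper tail satisfies $\pi_\be(\si^j\e)=\tau_\be^{j}(1)\in[0,1)$ for $j\ge1$, while $\pi_\be(\e)=1$; moreover each shift $\si^j\e$ again has all proper tails below $1$. If $\si^j\e=\e$ then $\tau_\be^{j}(1)=1$, impossible; if $\si^j\e\succ\e$ then the estimate applied to the greedy sequence $\e$ gives $1=\pi_\be(\e)<\pi_\be(\si^j\e)<1$, again impossible. Hence $\si^j\e\prec\e$ for all $j\ge1$.

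For sufficiency, I would first note that $\si^j\e\prec\e$ for all $j\ge1$ forces $\e_1=1$ (otherwise the first occurrence of a $1$, shifted to the front, yields a tail exceeding $\e$). Assuming $\e$ contains at least two $1$s -- the only Parry-admissible exception being $\e=10^\infty$, which would correspond to $\be=1\notin(1,2)$ -- part (2) of Lemma~\ref{lemmas} supplies a unique $\be\in(1,2)$ with $\pi_\be(\e)=1$. Writing $t_j:=\pi_\be(\si^j\e)$, it then suffices to show $t_j\in[0,1)$ for every $j\ge1$: the lower bound is automatic, and once $0\le t_j<1$ holds, the recursion $t_j=\be t_{j-1}-\e_j$ (with $t_0=1$) forces $\e_j=\lfloor\be t_{j-1}\rfloor$, so the greedy algorithm on $1$ reproduces $\e$ exactly and $\e=d(\be)$.

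The crux, and where I expect the real difficulty, is the bound $t_j<1$. I would set $M:=\sup_{j\ge1}t_j$ and compare $\si^j\e\prec\e$ at their first difference (at some position $m=m(j)\ge1$, where $\e_{j+m}=0$ and $\e_m=1$); subtracting the two expansions yields the identity $1-t_j=\be^{-m}(1+t_m-t_{j+m})$. Using $t_m\ge0$, $t_{j+m}\le M$, and $\be^{-m}\le\be^{-1}$, one obtains, whenever $M\ge1$, the uniform bound $t_j\le1+\be^{-1}(M-1)$ for all $j$; taking the supremum gives $M\le1+\be^{-1}(M-1)$, i.e. $(M-1)(1-\be^{-1})\le0$, whence $M\le1$. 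To upgrade this to strict inequality, suppose $t_{j_0}=1$ for some $j_0\ge1$; the same identity forces $t_m=0$ and $t_{j_0+m}=1$, the former meaning $\e=\e_1\cdots\e_{m-1}10^\infty$ is finite, so every tail beyond position $m$ vanishes, contradicting $t_{j_0+m}=1$. Thus $t_j<1$ for all $j\ge1$, completing the argument. The only genuinely delicate step is the supremum estimate that closes the contradiction; everything else is bookkeeping with the comparison estimate. (Since the result is classical one may instead simply invoke Parry~\cite{Pa}, but the self-contained argument above is what I would record.)
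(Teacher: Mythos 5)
Your proposal is correct, but it takes a genuinely different route from the paper for the simple reason that the paper gives no proof at all: it merely states that the theorem ``is essentially proved in'' \cite{Pa} and moves on. Your argument supplies the missing self-contained proof, and it handles the one real subtlety correctly: for $\be<2$ lexicographic order on $\Si$ does \emph{not} in general imply numerical order of $\pi_\be$-values, so you rightly isolate the comparison estimate (valid only when all proper tails of the smaller sequence stay strictly below $1$), use it for necessity, and then close sufficiency with the identity $1-t_j=\be^{-m}\bigl(1+t_m-t_{j+m}\bigr)$, the supremum bootstrap $M\le 1+\be^{-1}(M-1)$ forcing $M\le1$, and the finite-expansion rigidity argument upgrading $t_j\le1$ to $t_j<1$; all of these steps check out (note $M$ is finite since $t_j\le\frac1{\be-1}$). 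Your treatment also pinpoints exactly why the paper hedges with ``essentially'': the sequence $\e=10^\infty$ satisfies $\si^j\e\prec\e$ for all $j\ge1$ yet equals $d(\be)$ for no $\be\in(1,2)$ (it would force $\be=1$), so the statement as literally written has this single exceptional case, which you correctly flag and exclude. One microscopic point worth recording explicitly: part~(2) of Lemma~\ref{lemmas} requires at least one $0$ as well as two $1$s, but under your hypotheses this is automatic, since a sequence with no $0$ is $1^\infty$, which violates $\si\e\prec\e$. What the paper's citation buys is brevity and deference to the classical source; what your argument buys is a self-contained proof within the paper's own notation, plus an explicit delineation of the exceptional sequence that the word ``essentially'' silently sweeps aside.
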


This is essentially proved in \cite{Pa}. The following auxiliary
lemmas will be needed later on.

\begin{lemma} \label{noroom}
There exist no $\be, \tilde\be \in(1,2)$ such that $d'(\be) \prec
d(\tilde\be) \prec d(\be)$.
\end{lemma}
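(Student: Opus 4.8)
The plan is to argue by contradiction, using Parry's characterization (Theorem~\ref{parrycor}) as the only non-trivial ingredient. Suppose such $\be,\tilde\be$ exist; since the hypothesis forces $d'(\be)$ to be defined, write $d(\be)=\e_1\cdots\e_{n-1}\,1\,0^\infty$ and set $w=\e_1\cdots\e_{n-1}\,0$, so that $d'(\be)=(w)^\infty$. Put $a=d(\be)$ and $c=d(\tilde\be)$; the assumption reads $(w)^\infty\prec c\prec a$, and by monotonicity (Lemma~\ref{lemmas}(3)) we have $\tilde\be<\be$.

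First I would pin down the prefix of $c$. Since $a$ and $(w)^\infty$ share the prefix $\e_1\cdots\e_{n-1}$, at the first position $k\le n-1$ (if any) where $c_k\neq\e_k$ we would get either $c_k=0<\e_k$, forcing $c\prec(w)^\infty$, or $c_k=1>\e_k$, forcing $c\succ a$; both contradict the squeeze, so $c_i=\e_i$ for $1\le i\le n-1$. At position $n$ we have $a_n=1$ while $a$ is identically $0$ thereafter, so $c_n=1$ would give $c\succeq a$; hence $c_n=0$ and $c$ begins with the block $w$.

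The heart of the argument is a clash between the lower bound $c\succ(w)^\infty$ and Parry's condition. As $c\neq(w)^\infty$, let $p\ge1$ be maximal such that $c$ begins with $w^p$, and write $c=w^p w'$ with $w'=\si^{pn}c$, so that $w'$ does not begin with $w$. Stripping the common prefix $w^p$ from $c\succ(w)^\infty=w^p(w)^\infty$ gives $w'\succ(w)^\infty$; since $w'$ and $(w)^\infty$ first disagree at some position $r\le n$, this means $w'_r=1>0=w_r$. On the other hand, Parry's condition applied to $c=d(\tilde\be)$ at shift $pn$ gives $\si^{pn}c=w'\prec c=w^p w'$; as $w^p w'$ begins with $w$ and $w'$ agrees with $w$ up to position $r-1$, the first disagreement here is again at position $r$, forcing $w'_r<w_r$. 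These two strict inequalities are incompatible, which is the desired contradiction.

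The only delicate point is the bookkeeping of where the decisive disagreements occur: one must verify that both the comparison $w'\succ(w)^\infty$ and the Parry comparison $w'\prec w^p w'$ are settled at the \emph{same} position $r\le n$ inside the first block of $w'$, which is exactly what the maximality of $p$ guarantees (it ensures $w'$ does not start with $w$, so $r\le n$). Everything else follows immediately from the lexicographic definitions and requires no computation with $\be$ itself.
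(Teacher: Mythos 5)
Your proof is correct and takes essentially the same approach as the paper: both first pin down that $d(\tilde\be)$ must begin with the block $w=\e_1\cdots\e_{n-1}0$, and then play Parry's condition at shifts that are multiples of $n$ against the lower bound $d'(\be)\prec d(\tilde\be)$. The only difference is organizational — the paper iterates block-by-block to force $d(\tilde\be)=d'(\be)$ and contradicts strictness, whereas you pass directly to the first block where $d(\tilde\be)$ deviates from $w^\infty$ and obtain the two incompatible strict inequalities at a single position — but the decomposition, key lemma, and underlying squeeze are identical.
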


\begin{proof}
Assume $d'(\be) := (\e_1\dots\e_{n-1}0)^{\infty}$ is defined and let
$d(\tilde\be) := \de_1\de_2\de_3\dots$ and suppose $d'(\be) \prec
d(\tilde\be) \prec d(\be)$. This immediately forces
\[
\de_1\de_2\de_3\dots\de_n = \e_1\e_2\e_3\dots\e_{n-1}0.
\]
By Theorem \ref{parrycor}, $\de_{n+1}\de_{n+2}\dots =  \sigma^n
d(\tilde\be) \prec d(\tilde\be) = \e_1 \dots \e_{n-1}0
\de_{n+1}\dots.$ Hence
\[
\de_{n+1} \dots \de_{2n} \preceq \e_1 \dots \e_{n-1}0.
\]
On the other hand $(\e_1 \dots \e_{n-1} 0 )^{\infty} = d'(\be) \prec
d(\tilde\be)$ implies that
\[
\de_{n+1} \dots \de_{2n} \succeq \e_1 \dots \e_{n-1}0,
\]
and hence
\[
\de_{n+1} \dots \de_{2n} = \e_1 \dots \e_{n-1}0.
\]
By repeating this process we see that we are forced into the
spurious conclusion that $d(\tilde\be) = d'(\be)$.
\end{proof}

Put
\[
\mathcal{A_{\be}} = X_{\be} \cap \left(\frac{2-\be}{\be-1},1\right).
\]
It is clear that $\mathcal A_\be$ is invariant under $F_\be$ and
moreover, it is an attractor for $F_\be$ (see \cite{GS} for more
detail).

Let $\ov\e$ denote the mirror image of $\e$, i.e., $(\ov\e)_n=1-\e_n$.

\begin{lemma} \cite{GS}
\label{crit}
Let $\be\in(1,2)$. Then
\begin{equation}\label{eq:uniq}
\Si_\be^{\mathcal A_\be}:=\pi_\be^{-1}(\mathcal A_\be)=
\left\{\e\in\Sigma\; : \; \ov{d(\be)} \prec \sigma^j \e \prec
d(\be),\quad \forall \, j \ge 0 \right\}
\end{equation}
if $d'(\be)$ is not defined. If it is, replace $d(\be)$ and
$\ov{d(\be)}$ with $d'(\be)$ and $\ov{d'(\be)}$ respectively in the
above.
\end{lemma}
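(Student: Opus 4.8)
The plan is to reduce the metric description of $\mathcal A_\be$ to a purely lexicographic one via the order-isomorphism between the $\be$-shift and $[0,1]$. Throughout write $\gamma$ for the quasi-greedy expansion of $1$, i.e. $\gamma=d(\be)$ when $d'(\be)$ is undefined and $\gamma=d'(\be)$ otherwise; in both cases $\pi_\be(\gamma)=1$ (Lemma~\ref{lemmas}(1)), and by the mirror rule $\pi_\be(\ov\eta)=\frac1{\be-1}-\pi_\be(\eta)$ one has $\pi_\be(\ov\gamma)=\frac{2-\be}{\be-1}$. Thus $\gamma$ and $\ov\gamma$ are expansions of the two endpoints of the interval cutting out $\mathcal A_\be$, which is exactly why the two-sided condition $\ov\gamma\prec\si^j\e\prec\gamma$ should single out that open interval, and why the case split $d(\be)\leftrightarrow d'(\be)$ is absorbed once and for all into the single symbol $\gamma$.

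The key tool is the comparison principle of Rényi--Parry theory: on the $\be$-shift $W_\be:=\{\eta\in\Si:\si^j\eta\preceq\gamma\ \forall j\ge0\}$ the map $\pi_\be$ is strictly increasing for $\prec$ and carries $W_\be$ onto $[0,1]$, with $\pi_\be(\gamma)=1$. One proves this from the telescoping identity $\pi_\be(\eta)-\pi_\be(\zeta)=\be^{-k}\bigl(1+\pi_\be(\si^k\eta)-\pi_\be(\si^k\zeta)\bigr)$ at the first index $k$ of disagreement (with $\eta_k=1,\zeta_k=0$), together with the fact that every tail of a $W_\be$-sequence again lies in $W_\be$ and hence projects into $[0,1]$. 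The principle will run in both directions, converting strict word inequalities into strict value inequalities and back; I also use that the lower bound $\si^j\e\succ\ov\gamma$ is equivalent to $\si^j\ov\e\prec\gamma$, i.e. to membership of $\ov\e$ in $W_\be$ with value below $1$.

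For the inclusion $\supseteq$, suppose $\ov\gamma\prec\si^j\e\prec\gamma$ for all $j\ge0$. The strict upper bounds put every shift of $\e$ below $\gamma$, so $\e\in W_\be$ and the comparison principle applied to $\e\prec\gamma$ gives $\pi_\be(\e)<1$; the strict lower bounds give $\si^j\ov\e\prec\gamma$, so $\ov\e\in W_\be$ and $\pi_\be(\e)>\frac{2-\be}{\be-1}$, placing $\pi_\be(\e)$ in the open interval. Finally $\e$ is a \emph{unique} expansion: at a position with $\e_j=0$ the hypothesis $\si^j\e\prec\gamma$ forbids raising that digit, and at a position with $\e_j=1$ the hypothesis $\ov\gamma\prec\si^j\e$ forbids lowering it (the standard greedy/lazy criterion for univoqueness), so $\e\in\Si_\be$ and $\pi_\be(\e)\in\mathcal A_\be$. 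For $\subseteq$, let $\e\in\pi_\be^{-1}(\mathcal A_\be)$. Since $\mathcal A_\be$ is forward-invariant under $F_\be$ and $\pi_\be\circ\si=F_\be\circ\pi_\be$, the whole orbit satisfies $\pi_\be(\si^j\e)=F_\be^j(\pi_\be(\e))\in\bigl(\frac{2-\be}{\be-1},1\bigr)$ for every $j\ge0$. Being univoque, $\e$ is simultaneously its own greedy and lazy expansion, whence $\e,\ov\e\in W_\be$ together with all their shifts; feeding the strict bounds $\pi_\be(\si^j\e)<1$ and $\pi_\be(\ov{\si^j\e})=\frac1{\be-1}-\pi_\be(\si^j\e)<1$ into the comparison principle yields $\si^j\e\prec\gamma$ and $\si^j\ov\e\prec\gamma$, i.e. $\ov\gamma\prec\si^j\e\prec\gamma$ for all $j\ge0$.

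The main obstacle is the comparison principle itself: lexicographic order and the metric order induced by $\pi_\be$ genuinely disagree on all of $\Si$ when $\be<2$, so value inequalities cannot be read off directly from word inequalities. Monotonicity is recovered only on the admissible set $W_\be$, and the argument hinges on the Parry property of the quasi-greedy $\gamma$ (its tails stay in $[0,1]$). Care is likewise needed with strict versus non-strict inequalities at the boundary---when $\gamma$ is periodic, or when a shift of $\e$ ends in $0^\infty$ or $1^\infty$---and it is precisely the passage to the \emph{open} interval defining $\mathcal A_\be$, rather than to all of $X_\be$, that keeps every inequality strict and makes the clean symmetric description valid.
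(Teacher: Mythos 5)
The paper itself offers no proof of this lemma: it is quoted directly from \cite{GS}, so your attempt can only be measured against the standard argument there, which your outline does essentially reconstruct --- the mirror identity $\pi_\be(\ov\eta)=\frac1{\be-1}-\pi_\be(\eta)$, the conjugacy $\pi_\be\circ\si_\be=F_\be\circ\pi_\be$ together with the elementary invariance of $\mathcal A_\be$, and a Parry-type comparison between lexicographic order and values. The organization of the two inclusions, and the remark that the lower bound $\ov\gamma\prec\si^j\e$ is just $\si^j\ov\e\prec\gamma$, are exactly right.

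The genuine problem is that your ``key tool'' is false as stated. The map $\pi_\be$ is \emph{not} strictly increasing for $\prec$ on $W_\be=\{\eta:\si^j\eta\preceq\gamma\ \forall j\ge0\}$: take $\be=G=\frac{1+\sqrt5}{2}$, so that $\gamma=d'(G)=(10)^\infty$; then $\eta=10^\infty$ and $\zeta=0(10)^\infty$ both lie in $W_G$ and satisfy $\zeta\prec\eta$, yet $\pi_G(\zeta)=\pi_G(\eta)=1/G$. In the telescoping identity, strictness fails exactly when the tail of the larger sequence after the first disagreement is $0^\infty$ while the tail of the smaller one is an expansion of $1$. Your applications survive only because in every one of them the dominating sequence is $\gamma$ itself, and the quasi-greedy expansion never ends in $0^\infty$ (this is the very reason $d(\be)$ is replaced by $d'(\be)$ when $d(\be)$ is finite): with that observation one obtains the two one-sided statements you actually need, namely (i) if $\si^j\e\prec\gamma$ for all $j\ge0$ then $\pi_\be(\e)<1$, and (ii) for $\e$ with all shifts $\preceq\gamma$, the equality $\si^j\e=\gamma$ is excluded as soon as $\pi_\be(\si^j\e)<1$, so non-strict domination upgrades to strict. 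So the proof is repairable, but the repair is not the ``boundary care'' you defer to your closing paragraph: the strict/non-strict dichotomy, governed precisely by the fact that $\gamma$ has infinitely many $1$'s, is the entire content of the equivalence between the \emph{open} interval defining $\mathcal A_\be$ and the \emph{strict} inequalities in (\ref{eq:uniq}), and resting it on a general monotonicity principle that is false leaves a real gap at exactly that point.
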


\begin{rmk} In \cite{GS} it was shown that if
$\e\in\Sigma_\be$ is periodic, then $\pi_\be(\e)\in\mathcal A_\be$.
\end{rmk}

Put
\begin{equation}\label{eq:gamma}
\Gamma := \{\e\in\Si : \ov\e \preceq \si^k\e \preceq\e,\, \forall k\ge0\}.
\end{equation}
It is obvious that all sequences in $\Gamma$ begin with 1, and
furthermore, if $\e\in\Gamma$ begins with 10, then $\e=(10)^\infty$.
This set has been introduced and studied in detail in
\cite{A-bordeaux, A-etat}, as well as the sets $\Gamma_{\eta} :=
\left\{\e\in\Sigma\; : \; \ov{\eta} \preceq \sigma^j \e \preceq
\eta, \ \forall \, j \ge 0 \right\}$ for $\eta  \in \Gamma$.
(Actually the sequences in the set $\Gamma$ studied in
\cite{A-bordeaux, A-etat} satisfy the extra condition that they
begin with $11$, which only excludes from the present set $\Gamma$
the sequence $(10)^{\infty}$.)

\medskip

Define
\[
V_n := \{\be\in(1,2)\ :\ d'(\be) \mbox{ exists, has smallest
period~$n$, and } \\ d'(\be)\in\Gamma\},
\]
and let $\be_n := \inf U_n$ and $ \gamma_n := \min V_n$. (Provided
$U_n, V_n \neq\emptyset.$)

\begin{prop} \label{main}
Let $n\ge 2$. Then $ U_n, V_n \not= \emptyset$ and $\be_n =
\gamma_n$.
\end{prop}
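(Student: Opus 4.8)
The plan is to prove the two-sided inequality $\gamma_n\le\be_n\le\gamma_n$, after first checking that both sets are nonempty. For the nonemptiness of $V_n$ I would exhibit an explicit period-$n$ element of $\Gamma$: the sequence $(1^{n-1}0)^\infty$ is readily checked to lie in $\Gamma$, to have smallest period $n$, and (ending in $0$) to be a quasi-greedy expansion, so by Lemma~\ref{lemmas}(5) there is a $\be$ with $d'(\be)=(1^{n-1}0)^\infty$, giving $\be\in V_n$. Since there are only finitely many binary sequences of period $n$, the set $\{d'(\be):\be\in V_n\}$ is finite, and by the monotonicity of Lemma~\ref{lemmas}(4) the infimum $\gamma_n=\min V_n$ is attained, corresponding to the lexicographically least admissible period-$n$ sequence in $\Gamma$. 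Write $\eta:=d'(\gamma_n)$.

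For the inequality $\be_n\le\gamma_n$ I would show $(\gamma_n,2)\subseteq U_n$ by proving that $\eta$ itself is a period-$n$ unique expansion for every $\be>\gamma_n$. Let $\delta(\be)$ denote $d'(\be)$ when it is defined and $d(\be)$ otherwise, so that Lemma~\ref{crit} reads $\Si_\be^{\mathcal A_\be}=\{\e:\ov{\delta(\be)}\prec\si^j\e\prec\delta(\be),\ \forall j\ge0\}$. The key point is that $\delta(\be)\succ\eta$ for $\be>\gamma_n$: in the greedy case this follows from $d(\be)\succ d(\gamma_n)\succ d'(\gamma_n)=\eta$, while in the quasi-greedy case the alternative $d'(\be)\prec d(\gamma_n)$ would yield $d'(\be)\prec d(\gamma_n)\prec d(\be)$, contradicting Lemma~\ref{noroom}. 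Granting $\eta\prec\delta(\be)$, the defining inequalities $\ov\eta\preceq\si^k\eta\preceq\eta$ of $\Gamma$ give at once $\ov{\delta(\be)}\prec\ov\eta\preceq\si^k\eta\preceq\eta\prec\delta(\be)$ for all $k\ge0$, so $\eta\in\Si_\be^{\mathcal A_\be}$ and $\be\in U_n$. Hence $\be_n=\inf U_n\le\gamma_n$; this also secures $U_n\neq\emptyset$.

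For the reverse inequality $\gamma_n\le\be_n$ I would take any $\be\in U_n$ together with a period-$n$ unique expansion $\e$, and symmetrize its orbit. Let $O=\{\si^k\e:k\ge0\}$ and $\ov O=\{\si^k\ov\e:k\ge0\}$, and set $\eta^\ast:=\max(O\cup\ov O)$. The set $O\cup\ov O$ is invariant under both the shift and the mirror map, so $\si^k\eta^\ast\preceq\eta^\ast$ trivially, while $\ov{\eta^\ast}=\min(O\cup\ov O)\preceq\si^k\eta^\ast$; thus $\eta^\ast\in\Gamma$, and it has smallest period $n$ because it is a shift of $\e$ or of $\ov\e$. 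By Lemma~\ref{lemmas}(5) there is $\be''$ with $d'(\be'')=\eta^\ast$, whence $\be''\in V_n$. Finally, every element of $O\cup\ov O$ is $\prec\delta(\be)$ (for $O$ directly by Lemma~\ref{crit}, for $\ov O$ after mirroring the lower bound), so $\eta^\ast\prec\delta(\be)\preceq d(\be)$; were $\be''>\be$ we would get $d'(\be'')=\eta^\ast\prec d(\be)\prec d(\be'')$, again contradicting Lemma~\ref{noroom}. Therefore $\gamma_n\le\be''\le\be$, and taking the infimum over $\be\in U_n$ gives $\gamma_n\le\be_n$. Combined with the previous paragraph this yields $\be_n=\gamma_n$.

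I expect the combinatorial heart --- and the main obstacle --- to be the symmetrization step, i.e.\ verifying that $\eta^\ast=\max(O\cup\ov O)$ genuinely lands in $\Gamma$ with smallest period exactly $n$ rather than a proper divisor, together with the repeated, orientation-sensitive application of Lemma~\ref{noroom} to convert lexicographic comparisons of $d'$ and $d$ into the correct inequalities between the parameters. The passage back and forth between the greedy expansion $d(\be)$ and the quasi-greedy $d'(\be)$ in Lemma~\ref{crit} is where strictness and sign errors are most likely to creep in, so I would treat the two cases of $\delta(\be)$ explicitly throughout.
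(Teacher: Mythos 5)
Your proposal is correct and follows essentially the same route as the paper's own proof: nonemptiness of $V_n$ via $(1^{n-1}0)^\infty$ and Lemma~\ref{lemmas}(5), the inclusion $(\gamma_n,2)\subseteq U_n$ by feeding $d'(\gamma_n)$ into Lemma~\ref{crit} with Lemma~\ref{noroom} handling the case where $d'(\be)$ exists, and the reverse inequality by symmetrizing the orbit (your $\eta^\ast$ is exactly the paper's $a=\max\{\si^j\e,\si^j\ov\e\}$) and applying Lemmas~\ref{lemmas}(5) and \ref{noroom}. The only step you flag as delicate --- strictness of $\si^j\eta^\ast\prec\eta^\ast$ for $j\not\equiv0\bmod n$ --- is dispatched in the paper exactly as you'd expect, from the fact that $\eta^\ast$ has smallest period $n$.
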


\begin{proof}
For any $n\ge 2$, there always exists a $\be\in(1,2)$ such that
$d'(\be) = (1\dots 10)^{\infty}$ (of smallest period $n$) by Lemma
\ref{lemmas}, (5) so $V_n \not= \emptyset$. Notice also that $V_n$
is finite, so $\min V_n$ is well defined.

Let $\be\in (1,2)$, $\be > \gamma_n$. Then since $\gamma_n\in V_n$,
$\sigma^j d'(\gamma_n) \preceq d'(\gamma_n) \prec d(\gamma_n) \prec
d(\be)$ (using monotonicity), and similarly $\sigma^j
\ov{d'(\gamma_n)} \preceq d'(\gamma_n) \prec d(\gamma_n) \prec
d(\be)$, $\forall j \ge 0$. So we have $\ov{d(\be)} \prec \sigma^j
d'(\gamma_n) \prec d(\be), \; $ $\forall j \ge 0$ whence
$d'(\gamma_n) \in \Sigma_{\be}^{\mathcal{A_{\be}}} \subset
\Sigma_{\be}$ by Lemma \ref{crit} provided $d'(\be)$ does not exist.
(In the event that $d'(\be)$ does exist, this still holds via
$\ov{d'(\be)} \prec \sigma^j d'(\gamma_n) \prec d'(\be), \; $
$\forall j \ge 0$. Indeed if this were false then $d'(\be) \preceq
\sigma^j d'(\gamma_n) \preceq d'(\gamma_n) \prec d(\gamma_n) \prec
d(\be)$, but this is rendered absurd by Lemma \ref{noroom}.) Hence
$d'(\gamma_n) \in \Sigma_{\be}$, which implies $\be \in U_n$,
because $d'(\gamma_n)$ is periodic with smallest period $n$, and
therefore $\gamma_n \ge \be_n$, since $\be$ was arbitrary. Moreover,
we now know that $U_n \neq\emptyset$.

We complete the proof by showing $\gamma_n \le \be_n$. Let $\be \in
U_n$, $\be > \be_n$. So there exists $\e\in\Sigma_{\be}$ which is
periodic with smallest period $n$. Moreover, because $\e$ is
periodic, it must represent some number (in base $\be$) belonging to
$\mathcal{A_{\be}}$. I.e., $\e\in\Sigma_{\be}^{\mathcal{A_{\be}}}$.
Hence by Lemma \ref{crit}, $\ov{d(\be)} \prec \sigma^j \e \prec
d(\be),\quad \forall \, j \ge 0$. (If $d'(\be)$ exists, use the fact
that $d'(\be) \prec d(\be)$.)

Put $a = \max \{ \sigma^j \e , \sigma^j \ov{\e} \;\mid \; 0\le j \le
n-1 \}.$ So $\ov{a} \preceq \sigma^j a \preceq a$, $\forall j \ge
0$, i.e., $a\in\Gamma$. Moreover, since $a$ is periodic with
smallest period $n$, we must have, \[ \sigma^j a  \left\{
\begin{array}{ll}
         \prec a & \mbox{if $j\not\equiv 0$ mod $n$};\\
         = a & \mbox{if $j\equiv 0$ mod $n$}. \end{array} \right.\]

\noindent Hence $a = d'(\tilde{\be})$, for some $\tilde{\be}$ (by
Lemma \ref{lemmas}, (5)), and so $\tilde{\be}\in V_n$.

Finally, since $d'(\tilde{\be}) = a \prec d(\be),$ we must have
$\tilde{\be} \le \be$ by Lemma \ref{noroom}. I.e., for all $\be\in
U_n$, $\be> \be_n$, $\exists \; \tilde{\be}\in V_n$ such that
$\tilde{\be} \le \be$. Hence $\gamma_n \le \be_n$, as claimed.
\end{proof}

\begin{cor} \label{maincor}
$U_n = (\be_n,2)$.
\end{cor}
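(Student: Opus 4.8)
The plan is to establish the two inclusions $(\be_n,2)\subseteq U_n$ and $U_n\subseteq[\be_n,2)$ and then to exclude the single endpoint $\be_n$ from $U_n$. The first inclusion is already contained in the first half of the proof of Proposition~\ref{main}: there it was shown that for every $\be>\gamma_n$ one has $d'(\gamma_n)\in\Sigma_\be$, and since $d'(\gamma_n)$ is periodic of smallest period $n$ this means $\be\in U_n$. As $\be_n=\gamma_n$ by Proposition~\ref{main}, this gives $(\be_n,2)\subseteq U_n$. The reverse inclusion $U_n\subseteq[\be_n,2)$ is immediate from $\be_n=\inf U_n$. Hence everything reduces to the single claim that $\be_n\notin U_n$.

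To prove $\be_n\notin U_n$ I would argue by contradiction. Suppose $\be_n\in U_n$, so that $\Sigma_{\be_n}$ contains a periodic sequence $\e$ of smallest period $n$. Since $\be_n=\gamma_n\in V_n$, the quasi-greedy expansion $d'(\be_n)$ is defined and has smallest period $n$. The crucial point is that the correct upper bound to use in Lemma~\ref{crit} is now the strict one furnished by $d'(\be_n)$ rather than $d(\be_n)$: because $\e$ is periodic it projects into $\mathcal A_{\be_n}$ (by the remark following Lemma~\ref{crit}), so the $d'$-form of Lemma~\ref{crit} yields $\ov{d'(\be_n)}\prec\sigma^j\e\prec d'(\be_n)$ for all $j\ge0$. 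Passing to mirror images reverses $\prec$, so $\ov{d'(\be_n)}\prec\sigma^j\e$ also gives $\sigma^j\ov\e\prec d'(\be_n)$; thus $\sigma^j\e\prec d'(\be_n)$ and $\sigma^j\ov\e\prec d'(\be_n)$ for every $j$.

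Now I would reuse verbatim the construction from the second half of Proposition~\ref{main}: set $a=\max\{\sigma^j\e,\sigma^j\ov\e : 0\le j\le n-1\}$, which lies in $\Gamma$ and, being periodic of smallest period $n$, equals $d'(\tilde\be)$ for some $\tilde\be\in V_n$ by Lemma~\ref{lemmas}(5). The inequalities just derived force $a\prec d'(\be_n)$, and this is \emph{strict}. Since $a=d'(\tilde\be)$ and $d'(\be_n)=d'(\gamma_n)$ both have smallest period $n$, monotonicity (Lemma~\ref{lemmas}(4)) turns $d'(\tilde\be)\prec d'(\gamma_n)$ into $\tilde\be<\gamma_n$, contradicting $\gamma_n=\min V_n$ together with $\tilde\be\in V_n$. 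Hence no such $\e$ exists, so $\be_n\notin U_n$ and therefore $U_n=(\be_n,2)$.

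The main obstacle, and the one spot demanding care, is the strictness of $a\prec d'(\be_n)$. Using $d(\be_n)$ as the bound (as in Proposition~\ref{main}) together with Lemma~\ref{noroom} would only give $\tilde\be\le\be_n$, which is consistent with $\tilde\be=\gamma_n$ and yields no contradiction; the argument genuinely requires the sharper quasi-greedy bound $d'(\be_n)$, whose very existence is guaranteed by $\be_n\in V_n$. Intuitively, at the left endpoint $\be=\be_n$ the candidate cycle $d'(\be_n)$ fails to be a \emph{unique} expansion precisely because $\sigma^0 d'(\be_n)=d'(\be_n)$ violates the strict inequality of Lemma~\ref{crit}, and the computation above shows that no other orbit of smallest period $n$ can appear at $\be_n$ to take its place.
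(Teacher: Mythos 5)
Your proof is correct, but it takes a genuinely different route from the paper's. The paper rules out $U_n=[\be_n,2)$ without ever looking at the endpoint itself: given an \emph{arbitrary} $\be\in U_n$ with periodic $\e\in\Si_\be$, it truncates the greedy expansion $d(\be)=d_1d_2\dots$ at the smallest $k$ for which $\ov{d_1\dots d_k}1^\infty \prec \si^j\e \prec d_1\dots d_k 0^\infty$ holds for all relevant $j$, invokes Theorem~\ref{parrycor} to realize $d_1\dots d_k0^\infty$ as $d(\be')$ for some $\be'$, and concludes via Lemma~\ref{crit} and monotonicity that $\be'\in U_n$ with $\be'<\be$; thus $U_n$ has no least element. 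You instead argue by contradiction at the single point $\be=\be_n$, exploiting the identification $\be_n=\gamma_n\in V_n$ from Proposition~\ref{main}: since $d'(\be_n)$ exists with smallest period $n$, the $d'$-form of Lemma~\ref{crit} gives the \emph{strict} bounds $\ov{d'(\be_n)}\prec\si^j\e\prec d'(\be_n)$, the max-of-shifts construction recycled from Proposition~\ref{main} then yields $a=d'(\tilde\be)\in\Gamma$ with $\tilde\be\in V_n$ and $a\prec d'(\be_n)$ strictly, and Lemma~\ref{lemmas}(4) converts this into $\tilde\be<\gamma_n$, contradicting $\gamma_n=\min V_n$. Both arguments are sound; your version is shorter given that Proposition~\ref{main} is already in hand, and it avoids Parry's theorem and the truncation step (whose admissibility needs its own small check), while the paper's version is more self-contained in that it never needs the $d'$-form of Lemma~\ref{crit}, the same-period monotonicity statement, or even the fact that $\inf U_n$ lies in $V_n$ — it proves the stronger-sounding statement that no element of $U_n$ is minimal. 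Your closing observation about why $d(\be_n)$ together with Lemma~\ref{noroom} would not suffice (it only gives $\tilde\be\le\be_n$) correctly identifies the one place where your argument genuinely needs the quasi-greedy bound.
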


\begin{proof}
In the above it was shown that, if $\be\in (1,2)$ with $\be >
\gamma_n$, then $\be\in U_n$. Moreover, it was shown that $\gamma_n
= \be_n = \inf U_n$, hence $U_n = (\be_n,2)$, or $[\be_n,2)$. We now
discount the second case. Let $\be\in U_n$. So there exists a
periodic sequence, with smallest period $n$, $\e\in\Si$ such that
$\pi_{\be}(\e)\in X_{\be}.$ Moreover the fact that $\e$ is periodic
ensures that $\pi_{\be}(\e)\in\mathcal{A_{\be}}$ (see \cite{GS}).
Hence
\[
\ov{d(\be)} \prec \si^j\e \prec d(\be), \quad \forall j \in [0, n-1]
\]
by Lemma~\ref{crit}.

Putting $d(\be):=d_1d_2d_3\dots$, let $k$ be the smallest number such that
\[
\ov{d_1d_2d_3\dots d_k}1^\infty \prec \si^j\e \prec d_1d_2d_3\dots
d_k 0^\infty, \quad \forall j \in [0, n-1].
\]

Now by Theorem \ref{parrycor}, there exists $\be'\in(1,2)$ such that
$d(\be')=d_1d_2d_3\dots d_k 0^\infty$, whence
\[
\ov{d(\be')} \prec \si^j\e \prec d(\be'), \quad \forall j \in [0, n-1],
\]
i.e., $\be' \in U_n$ by Lemma~\ref{crit}. Now note that
$d(\be')\prec d(\be)$ implies $\be'<\be$ by monotonicity.
\end{proof}

We now introduce a result that allows us to make the connection to
the Sharkovski\u{\i} ordering.

\begin{prop}\label{shark-like}
If $a_k$ denote the lexicographically least sequence in $\Gamma$ of
smallest period $k$, then

\begin{tabular}{cccccccccccccc}

&$a_3$ & $\succ$ & $a_5$ & $\succ$ & $a_7$ & $\succ$  & $\cdots $ &
$\succ$ & $a_{2m+1}$ & $\succ$ & $\cdots$\\

$\succ$ &$a_{2\cdot 3}$ & $\succ$ & $a_{2\cdot 5}$ & $\succ$ &
$a_{2\cdot 7}$ & $\succ$ &$ \cdots $ & $\succ$ & $a_{2\cdot (2m+1)}$
& $\succ$ & $\cdots$\\

$\succ$ &$a_{4\cdot 3}$ & $\succ$ & $a_{4\cdot 5}$ & $\succ$ &
$a_{4\cdot 7}$ & $\succ$ & $ \cdots $ & $\succ$ & $a_{4\cdot
(2m+1)}$ & $\succ$ & $\cdots$\\

&$\vdots$&&$\vdots$&&$\vdots$&&$$&&$\vdots$\\

$\succ$ &$a_{2^n\cdot 3}$ & $\succ$ & $a_{2^n\cdot 5}$ & $\succ$ &
$a_{2^n\cdot 7}$ & $\succ$  &$ \cdots $ & $\succ$ & $a_{2^n\cdot
(2m+1)}$ & $\succ$ & $\cdots$\\

&$\vdots$&&$\vdots$&&$\vdots$&&$$&&$\vdots$\\

& && && $\cdots$ &$\succ$ &$a_8$ &$\succ$ &$a_4$ &$\succ$ &$a_2$.\\

\end{tabular}
\end{prop}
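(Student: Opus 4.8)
The plan is to prove the single assertion that $k \rhd l$ (in the Sharkovski\u{\i} order) implies $a_k \succ a_l$, since the displayed table is nothing but $\mathbb{N}$ written out in that order. I would organise the argument around the three features of the Sharkovski\u{\i} ordering visible in the table: the horizontal chains of fixed $2$-adic valuation (each row, strictly $\succ$-decreasing), the vertical passage from one row to the next (period multiplied by $2$), and the powers of $2$ forming the bottom line. The combinatorics of $\Gamma$ worked out in \cite{A-bordeaux, A-etat} will carry the technical weight.

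First I would dispose of the top (odd) row explicitly. Every element of $\Gamma$ of smallest period $2m+1\ge 3$ begins with $11$, since by the remark after \eqref{eq:gamma} an element of $\Gamma$ beginning with $10$ must equal $(10)^\infty$. A direct check using shift-maximality $\si^j\e\preceq\e$ together with the mirror bound $\ov\e\preceq\si^j\e$ then forces the minimal such sequence to be $a_{2m+1}=(1(10)^m)^\infty$. Comparing the words $1(10)^m$ and $1(10)^{m+1}$ at position $2m+3$ (where the first carries a $1$ and the second a $0$) gives $a_{2m+1}\succ a_{2m+3}$ immediately, which is the whole top line, and also shows $a_{2m+1}\searrow 1(10)^\infty$.

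Next the vertical structure. Here I would introduce the period-doubling (renormalisation) operator $\Phi$ sending a periodic element of $\Gamma$ of smallest period $n$ to one of smallest period $2n$, and prove three things: that $\Phi$ maps the period-$n$ part of $\Gamma$ into the period-$2n$ part, that it is $\prec$-order-preserving, and that it carries $a_n$ to $a_{2n}$ (minimum to minimum). Granting these, $a_n\succ a_{n'}\Rightarrow a_{2n}\succ a_{2n'}$, so each row is $\succ$-decreasing and is an exact $\Phi$-image of the row above it. The inter-row junctions are then limiting comparisons propagated by $\Phi$: along the top row $a_{2m+1}\searrow 1(10)^\infty$ while $1(10)^\infty\succ a_6$, so every odd $a_{2m+1}$ dominates the entire second row, and applying $\Phi$ repeatedly transports this to every junction. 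Finally the powers of $2$ satisfy $a_{2^k}=\Phi^{\,k-1}(a_2)$ and increase to the shifted Thue--Morse sequence $\m_1\m_2\dots$ defining $\be_{KL}$ (recovering $\be_{2^k}\nearrow\be_{KL}$ of \cite{GS}), whereas every entry with an odd factor lies above $\m_1\m_2\dots$; hence $a_2\prec a_4\prec\cdots\prec \m_1\m_2\dots$ sit below everything else, which is precisely the bottom line $\cdots\succ a_8\succ a_4\succ a_2$. The monotonicity of Lemma~\ref{lemmas} and the non-existence result of Lemma~\ref{noroom} are what keep all of these lexicographic comparisons consistent with the underlying parameter $\be$.

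The main obstacle is the period-doubling step: constructing $\Phi$ and proving it is an order-preserving bijection onto the period-$2n$ part of $\Gamma$ with $\Phi(a_n)=a_{2n}$. The delicate point is that $\Phi$ is \emph{not} a letter-to-block substitution applied to the minimal word --- for instance $a_3$ has word $110$ and $a_6$ has word $110100$, while $a_4$ has word $1100$ and $a_8$ has word $11010010$, and no single length-doubling substitution can produce both pairs. Thus $\Phi$ must be described through the renormalised dynamics (a first-return / two-block recoding), and verifying that its image is exactly the period-$2n$ part of $\Gamma$ with the minima matched is exactly where the structure theory of $\Gamma$ and of the sets $\Gamma_\eta$ from \cite{A-bordeaux, A-etat} becomes indispensable.
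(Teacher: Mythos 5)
Your overall architecture coincides with the paper's: identify the odd row explicitly as $a_{2m+1}=(1(10)^m)^\infty$, then generate all later rows by an order-preserving period-doubling operator, with the shifted Thue--Morse sequence $L=\m_1\m_2\m_3\cdots$ acting as the separating fixed point (powers of two below $L$, everything else above $L$); this is exactly Proposition~\ref{smallest} combined with Lemma~\ref{elementary-gamma}~(d) and (f). The genuine gap sits at the point you yourself flag as ``the main obstacle'': the operator $\Phi$ is never constructed, and its three required properties --- that it maps the period-$n$ part of $\Gamma$ into the period-$2n$ part, that it preserves $\prec$, and that it carries $a_n$ to $a_{2n}$ --- are asserted rather than proved. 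These properties are precisely the technical content of the paper's Lemma~\ref{other-gamma} and Proposition~\ref{smallest}; deferring them to the structure theory of $\Gamma$ in \cite{A-bordeaux, A-etat} (which the paper itself reproves rather than cites) leaves the proof without its engine. The minima-to-minima claim is the delicate one: it needs a surjectivity statement, namely that every periodic element of $\Gamma$ which is $\preceq 1(10)^\infty$ (other than $(10)^\infty$) already lies in $\Phi(\Gamma)$ --- this is Lemma~\ref{other-gamma}~(c), proved by a run-length analysis writing such an $\e$ as $1(10)^{i_1}(01)^{j_1}(10)^{i_2}\cdots=\mu(1^{i_1}0^{j_1}1^{i_2}\cdots)$. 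Without it, $\Phi(a_n)$ is only an \emph{upper bound} for $a_{2n}$, and every vertical and diagonal comparison in the table is unsupported.

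Moreover, the reason you give for the difficulty --- that no letter-to-block substitution sends $110\mapsto 110100$ and $1100\mapsto 11010010$, so $\Phi$ must be described through a first-return recoding --- points away from the actual resolution, which is much simpler. The paper's operator is $\mu$, defined coordinatewise by $(\mu\e)_1=1$, $(\mu\e)_{2n}=\e_n$, $(\mu\e)_{2n+1}=1-\e_n$; equivalently $0\,\mu(\e)=\varphi(0\,\e)$, where $\varphi$ is the Thue--Morse morphism $0\mapsto 01$, $1\mapsto 10$. So $\Phi$ \emph{is} the length-doubling substitution, merely conjugated by prepending one symbol, and that one-letter offset is exactly what repairs the phase mismatch in your two examples: one checks directly that $\mu((110)^\infty)=(110100)^\infty$ and $\mu((1100)^\infty)=(11010010)^\infty$. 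Granting $\mu$ and the lemmas just cited, the rest of your plan (monotonicity along each row, the junctions via $a_{2m+1}\searrow 1(10)^\infty=\mu(1^\infty)$, and the separation of the powers of two by $L$) is correct and coincides with how the paper derives the proposition.
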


\begin{proof}
This proposition is a consequence of Proposition~\ref{smallest} and
Lemma~\ref{elementary-gamma}~(d) and (f) -- see below.
\end{proof}

Now by Lemma \ref{lemmas} (5), $a_k=d'(\be)$ for some $\be\in(1,2)$.
Furthermore, by Lemma~\ref{lemmas} (4), $\be=\be_k$. Hence, invoking
monotonicity allows us to apply the required ordering on the
$\be_k$. This reduces the proof of Theorem~\ref{thm:main} to proving
Proposition~\ref{shark-like}.

To calculate $\be_k$ explicitly, given $a_k$, we need to solve
$a_k=d'(\be)$ for $\be$. I.e., if $a_k=\al_1^{(k)}\al_2^{(k)}\dots$
then $\be_k$ is the unique root in $(1,2)$ of the polynomial
\[
x^k - \al_1^{(k)}x^{k-1} - \al_2^{(k)}x^{k-2} - \cdots -
\al_{k-1}^{(k)}x - 1.
\]

\bigskip

In order to prove Proposition~\ref{shark-like},  we will give a
construction of the sequences $(a_k)$. This construction was
suggested in \cite{A-etat} (see also \cite{A-bordeaux}). For the
sake of completeness, we give here a self-contained proof extracted
from these two references. We begin with a definition and two
lemmas.

\begin{Def}
We denote by $\mu$ the map defined on $\Sigma$ by:
if $\varepsilon = (\varepsilon_n)_{n \geq 1}$ belongs to $\Sigma$, then
$$
(\mu(\varepsilon))_1 := 1 \ \mbox{\rm and} \ \forall n \geq 1, \
\left\{
\begin{array}{lll}
(\mu(\varepsilon))_{2n} &:=& \varepsilon_n \\
(\mu(\varepsilon))_{2n+1} &:=& 1 - \varepsilon_n. \\
\end{array}
\right.
$$
We denote by $L := (\m_n)_{n \geq 1}$  the sequence obtained by
shifting the (complete) Thue-Morse sequence $(\m_n)_{n \geq 0}$.
\end{Def}

\begin{lemma}\label{elementary-gamma}
The following properties of the map $\mu$ and of the sequence $L$
hold true:

\begin{itemize}
\item[(a)] For all $k \geq 0$, $\sigma^{2k+1} \mu = \sigma \mu \ \sigma^k$.

\item[(b)] For any sequence $\varepsilon$ in $\Sigma$ we have
          $\sigma(\overline{\mu(\varepsilon)}) =
          \sigma(\mu(\overline{\varepsilon})) =
          \overline{\sigma(\mu(\varepsilon))}$.

\item[(c)] Let $\varepsilon$ be a sequence in $\Sigma$. If $\varepsilon$ is
          periodic with smallest period $T > 0$ and $\varepsilon_T = 0$, then
          $\mu(\varepsilon)$ is periodic with smallest period $2T$.
          In particular, if $\varepsilon$ is a sequence in $\Sigma$ such that
          $\mu(\varepsilon)$ is periodic with smallest period $U$, then $U$
          cannot be odd, hence $U$ is even, say $U=2T$, and $\varepsilon$ is
          periodic with smallest period $T$ and satisfies $\varepsilon_T = 0$.

\item[(d)] If the sequences $\varepsilon = (\varepsilon_n)_{n \geq 1}$
          and $\varepsilon' = (\varepsilon'_n)_{n \geq 1}$ in $\Sigma$
          satisfy $\varepsilon \prec \varepsilon'$, then
          $\mu(\varepsilon) \prec \mu(\varepsilon')$ and
          $\sigma(\mu(\varepsilon)) \prec \sigma(\mu(\varepsilon'))$.

\item[(e)] The sequence $L$ is a fixed point of the map $\mu$. Also, for
           any sequence $\varepsilon := (\varepsilon_n)_{n \geq 1}$, we have
           that $\mu^{\infty}(\varepsilon) := \lim_{n \to \infty}\mu^n(\varepsilon)$
           exists and $\mu^{\infty}(\varepsilon) = L$.

\item[(f)] Let $\varepsilon := (\varepsilon_n)_{n \geq 1}$ be a sequence in $\Sigma$.
          If $\varepsilon \prec L$, then $\varepsilon \prec \mu(\varepsilon)$.
          If $\varepsilon \succ L$, then $\varepsilon \succ \mu(\varepsilon)$.
\end{itemize}
\end{lemma}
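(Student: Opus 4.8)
The plan is to treat (a), (b), (d) as immediate consequences of the explicit form $\mu(\varepsilon) = 1\,\varepsilon_1\overline{\varepsilon_1}\,\varepsilon_2\overline{\varepsilon_2}\cdots$, i.e. $\mu(\varepsilon)_1 = 1$, $\mu(\varepsilon)_{2n} = \varepsilon_n$, $\mu(\varepsilon)_{2n+1} = \overline{\varepsilon_n}$. For (a) I would compute both sides on an arbitrary $\varepsilon$: the left-hand side $\sigma^{2k+1}\mu(\varepsilon)$ begins at index $2(k+1)$, giving $\varepsilon_{k+1}\overline{\varepsilon_{k+1}}\varepsilon_{k+2}\cdots$, while the right-hand side $\sigma\mu\sigma^k(\varepsilon)$ produces the same string after stripping the leading $1$ from $\mu(\sigma^k\varepsilon)$. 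For (b) I would write out $\overline{\mu(\varepsilon)}$, $\mu(\overline\varepsilon)$ and $\overline{\sigma\mu(\varepsilon)}$ entrywise and observe that all three equal $\overline{\varepsilon_1}\varepsilon_1\overline{\varepsilon_2}\varepsilon_2\cdots$ after applying $\sigma$. For (d), if $k$ is the least index with $\varepsilon_k \ne \varepsilon'_k$ (so $\varepsilon_k = 0$, $\varepsilon'_k = 1$), then $\mu(\varepsilon)$ and $\mu(\varepsilon')$ agree on positions $1,\dots,2k-1$ and first differ at position $2k$, where $\mu(\varepsilon)_{2k} = 0 < 1 = \mu(\varepsilon')_{2k}$; the same disagreement (shifted to position $2k-1 \ge 1$) witnesses $\sigma\mu(\varepsilon) \prec \sigma\mu(\varepsilon')$.

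For (c) the core observation is that $\mu(\varepsilon)_{2n}$ and $\mu(\varepsilon)_{2n+1}$ are always complementary, so the sequence $c_m := \mu(\varepsilon)_m \oplus \mu(\varepsilon)_{m+1}$ equals $1$ at every even index. If $\mu(\varepsilon)$ had an odd period $p$, then $\{2n \bmod p\}$ exhausts all residues, forcing $c \equiv 1$, i.e. $\mu(\varepsilon) = (10)^\infty$; this occurs only for $\varepsilon = 0^\infty$, whose image has smallest period $2$, contradicting oddness. Hence every period of $\mu(\varepsilon)$ is even. Reading off the even indices shows any even period $2s$ yields a period $s$ of $\varepsilon$ (so $s \ge T$), whence the smallest period of $\mu(\varepsilon)$ equals $2T$ provided $2T$ is itself a period; and $2T$ is a period exactly when the index-$1$ compatibility $\mu(\varepsilon)_{2T+1} = \overline{\varepsilon_T} = 1 = \mu(\varepsilon)_1$ holds, i.e. when $\varepsilon_T = 0$. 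This gives the forward direction. For the converse I would let $t$ be the smallest period of $\varepsilon$ (which exists since $\varepsilon_n = \mu(\varepsilon)_{2n}$ inherits periodicity from $\mu(\varepsilon)$) and rule out $\varepsilon_t = 1$: in that case every multiple $2kt$ fails to be a period because $\mu(\varepsilon)_{2kt+1} = \overline{\varepsilon_{kt}} = \overline{\varepsilon_t} = 0 \ne 1$, contradicting that the (even) smallest period of $\mu(\varepsilon)$ is such a multiple.

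For (e) I would verify $\mu(L) = L$ directly from the Thue–Morse recursion $\m_{2n} = \m_n$, $\m_{2n+1} = \overline{\m_n}$ together with $\m_1 = 1$, matching the three defining clauses of $\mu$. Convergence then follows from a prefix-doubling property: since $\mu(\varepsilon)_i$ depends only on $\varepsilon_{\lfloor i/2\rfloor}$ (and the fixed leading $1$), if $\varepsilon$ and $\varepsilon'$ agree on a prefix of length $\ell$ then $\mu(\varepsilon)$ and $\mu(\varepsilon')$ agree on a prefix of length at least $2\ell + 1$. Applying this repeatedly to $\varepsilon$ and $L = \mu(L)$ shows that $\mu^n(\varepsilon)$ agrees with $L$ on prefixes whose length tends to $\infty$, so $\mu^\infty(\varepsilon) = L$.

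Finally, (f) is the crux and the step I expect to cost the most thought, though it becomes short with the right observation. Let $k$ be the first index at which $\varepsilon$ and $L$ differ. For $i < k$ we have $\varepsilon_i = \m_i$, and since $\mu(\varepsilon)_i$ depends only on $\varepsilon_{\lfloor i/2\rfloor}$ with $\lfloor i/2\rfloor < k$ (using $\mu(\m) = \m$ from (e)), we get $\mu(\varepsilon)_i = \m_i = L_i$ for all $i \le k$, including $i = k$ (the case $k=1$ being handled by $\mu(\varepsilon)_1 = 1$ directly). Hence $\varepsilon$ and $\mu(\varepsilon)$ agree up to position $k-1$ and differ at position $k$, where $\mu(\varepsilon)_k = L_k$ while $\varepsilon_k \ne L_k$. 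Therefore the sign of the first disagreement between $\varepsilon$ and $\mu(\varepsilon)$ is exactly that between $\varepsilon$ and $L$: if $\varepsilon \prec L$ then $\varepsilon \prec \mu(\varepsilon)$, and if $\varepsilon \succ L$ then $\varepsilon \succ \mu(\varepsilon)$. I would double-check the boundary index $k=1$ and the value of $\lfloor k/2\rfloor$ separately, as these are the only places where the clean argument could slip.
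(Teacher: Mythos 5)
Your proposal is correct, and on the two parts for which the paper actually supplies a proof --- (e) and (f) --- it takes a genuinely different route. The paper dismisses (a)--(d) as exercises, so your explicit index computations for (a), (b), (d) and your parity argument for (c) simply fill that gap; the only thing to flag there is that in (c) you implicitly use the standard fact that the smallest period of an infinite periodic sequence divides every period (needed when you assert that the smallest period $U$ of $\mu(\varepsilon)$ is a multiple of $2t$); this follows from the usual gcd argument ($p,q$ periods $\Rightarrow$ $q-p$ a period) and deserves one sentence, but it is not a gap. For (e), the paper conjugates $\mu$ to the Thue-Morse morphism $\varphi$ ($0\mapsto 01$, $1\mapsto 10$) via the identity $0\mu^k(\varepsilon)=\varphi^k(0\varepsilon)$, so existence of the limit and its value $L$ are inherited from $\varphi^{\infty}(0\varepsilon)$ being the Thue-Morse sequence; your prefix-doubling argument (agreement on a prefix of length $\ell$ forces agreement of the $\mu$-images on a prefix of length $2\ell+1$, since $\mu(\varepsilon)_i$ depends only on $\varepsilon_{\lfloor i/2\rfloor}$) proves the same statement in a self-contained way. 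The contrast is sharpest in (f): the paper's proof is soft --- it establishes the converse implications $\varepsilon\prec\mu(\varepsilon)\Rightarrow\varepsilon\prec L$ and $\varepsilon\succ\mu(\varepsilon)\Rightarrow\varepsilon\succ L$ by iterating the monotonicity (d) and passing to the limit via (e), and then (f) follows by trichotomy, the equality case $\varepsilon=\mu(\varepsilon)$ forcing $\varepsilon=L$. Your proof is instead a direct first-disagreement computation: $\mu(\varepsilon)$ agrees with $L$ up to and including position $k$, the first position where $\varepsilon$ and $L$ differ, because positions $i\le k$ of $\mu(\varepsilon)$ are controlled by $\varepsilon_{\lfloor i/2\rfloor}$ with $\lfloor i/2\rfloor<k$ together with $\mu(L)=L$. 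This localizes the comparison, needs no limiting argument, and gives (f) and its converse simultaneously; what the paper's route buys is economy, since it reuses (d) and (e) verbatim and requires no new computation. Your boundary checks at $k=1$ are handled correctly.
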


\begin{proof}
The proofs of (a--d) are easy and left to the reader.

\medskip

Let us prove (e). It is straightforward that the sequence
$0(\mu(\varepsilon))_{n \geq 1}$ is the image of the sequence
$0(\varepsilon_n)_{n \geq 1}$ under the morphism $\varphi$ defined
by $\varphi(0) = 01$, $\varphi(1) = 10$, introduced above. In
particular, the sequence $L = (\m_n)_{n \geq 1}$ is a fixed point of
$\mu$, since the complete Thue-Morse sequence $(\m_n)_{n \geq 0}$ is
a fixed point of the morphism $\varphi$. Also we have, for any
sequence $\varepsilon$ in $\Sigma$ and any $k \ge 0$, the equality
$0\mu^k(\varepsilon) = \varphi^k(0\varepsilon)$. Hence
$0\mu^{\infty}(\varepsilon)$ exists and is equal to
$\varphi^{\infty}(0\varepsilon)$ which is precisely the (complete)
Thue-Morse sequence $(\m_n)_{n \geq 0}$.

\medskip

In order to prove (f), note that, if $\varepsilon \prec
\mu(\varepsilon)$, then, using (d), $\mu(\varepsilon) \prec
\mu(\mu(\varepsilon))$. Hence $\varepsilon \prec \mu(\varepsilon)
\prec \mu^2(\varepsilon)$, and by induction $\varepsilon \prec
\mu(\varepsilon) \prec \mu^k(\varepsilon)$ for all $k \geq 2$.
Letting $k$ tend to infinity and using (e) gives that $\varepsilon
\prec \mu(\varepsilon) \preceq L$, hence $\varepsilon \prec L$.
Reversing the inequalities show that $\varepsilon \succ
\mu(\varepsilon)$ implies that $ \varepsilon \succ L$, which proves
(f).
\end{proof}

\begin{lemma}\label{other-gamma}
The set $\Gamma$ has the following properties:
\begin{itemize}
\item[(a)] Let $\varepsilon$ be a sequence in $\Gamma$. Suppose that there
          exists $d \geq 1$ such that
$$
\varepsilon_{d+1} \ \varepsilon_{d+2} \cdots \varepsilon_{2d} =
\overline{\varepsilon_1 \ \varepsilon_2 \cdots \varepsilon_d}
$$
then the sequence $\varepsilon$ is periodic of period $2d$, i.e., we have
$$
\varepsilon = (\varepsilon_1 \ \varepsilon_2 \cdots \varepsilon_d \
\overline{\varepsilon_1 \ \varepsilon_2 \cdots \varepsilon_d})^{\infty}.
$$

\item[(b)] Let $\varepsilon$ be a sequence in $\Sigma$. Then $\varepsilon$
           belongs to $\Gamma$ if and only if $\varepsilon_1 = 1$ and
           $\mu(\varepsilon)$ belongs to $\Gamma$.

\item[(c)] If $\varepsilon$ is a sequence in $\Gamma$ such that
           $\varepsilon \neq (10)^{\infty}$ and
           $\varepsilon \preceq 1(10)^{\infty}$, then there exists a
           sequence $\varepsilon'$ also in $\Gamma$ such that
           $\varepsilon = \mu(\varepsilon')$.

\item [(d)] If $\varepsilon$ is a periodic sequence in $\Gamma$ such that
           $\varepsilon \preceq 1(10)^{\infty}$, then its smallest period is even.
\end{itemize}
\end{lemma}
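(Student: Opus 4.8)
The plan is to prove the four parts in the order (a), (b), (c), (d). Part (a) is a self-contained combinatorial fact about $\Gamma$; part (b) is the translation of membership in $\Gamma$ through the map $\mu$ and rests entirely on Lemma~\ref{elementary-gamma}; part (c) is the substantive one and will use (b); and (d) is then an immediate consequence of (c) together with Lemma~\ref{elementary-gamma}(c). Throughout I write $w:=\e_1\cdots\e_d$ and use repeatedly that every element of $\Gamma$ begins with $1$, and that an element of $\Gamma$ beginning with $10$ must equal $(10)^\infty$ (so if $\e\in\Gamma$ and $\e\ne(10)^\infty$ then $\e$ begins with $11$).

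For (a) I would decompose $\e$ into consecutive blocks $b_j:=\e_{jd+1}\cdots\e_{(j+1)d}$ of length $d$, so that $b_0=w$ and, by hypothesis, $b_1=\ov w$, and prove by induction on $m\ge1$ that $b_{2m}=w$ and $b_{2m+1}=\ov w$; this gives $\e=(w\ov w)^\infty$ at once. Suppose $\e$ begins with $(w\ov w)^m$. The maximality half of $\e\in\Gamma$ applied to $\si^{2md}\e$ (whose leading block is $b_{2m}$) gives $b_{2m}\preceq w$, while the minimality half applied to $\si^{(2m-1)d}\e$ (whose two leading blocks are $\ov w\,b_{2m}$, to be compared with the two leading blocks $\ov w\,w$ of $\ov\e$) gives $b_{2m}\succeq w$; hence $b_{2m}=w$. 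Knowing $b_{2m}=w$, the maximality of $\si^{2md}\e$ now forces its second block $b_{2m+1}\preceq\ov w$, and minimality applied to $\si^{(2m+1)d}\e$ forces $b_{2m+1}\succeq\ov w$; hence $b_{2m+1}=\ov w$, completing the induction. The point is that comparing whole blocks is legitimate because the earlier blocks have already been pinned down exactly, so each inequality is decided at the first unequal block.

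For (b) the content (every element of $\Gamma$ starts with $1$) is the equivalence $\mu(\e)\in\Gamma\iff\e\in\Gamma$ for $\e$ with $\e_1=1$. I would split the shift-orbit of $\mu(\e)$ into odd and even shifts: Lemma~\ref{elementary-gamma}(a) gives $\si^{2k+1}\mu(\e)=\si\mu(\si^k\e)$, and a direct computation gives, for $k\ge1$, $\si^{2k}\mu(\e)=\ov{\e_k}\cdot\si\mu(\si^k\e)$ (the single symbol $\ov{\e_k}$ prepended). Since $\mu$ and $\si\mu$ are strictly order-preserving (Lemma~\ref{elementary-gamma}(d)) and $\si\mu$ commutes with the mirror (Lemma~\ref{elementary-gamma}(b)), the whole chain $\ov{\mu(\e)}\preceq\si^j\mu(\e)\preceq\mu(\e)$ for all $j$ is equivalent to $\ov\e\preceq\si^k\e\preceq\e$ for all $k$; the leading symbols $\ov{\e_k}$ of the even shifts are exactly what must be weighed against the leading $1$ of $\mu(\e)$, which is where the hypothesis $\e_1=1$ is used.

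For (c) I would first observe that a sequence lies in the image of $\mu$ precisely when it begins with $1$ and satisfies the \emph{mirror relation} $\e_{2n+1}=\ov{\e_{2n}}$ for all $n\ge1$ (one then recovers $\e'=(\e_{2n})_{n\ge1}$ with $\mu(\e')=\e$). So (c) reduces to proving this mirror relation for $\e\in\Gamma$ with $(10)^\infty\prec\e\preceq1(10)^\infty$; granting it, $\e'_1=\e_2=1$ and $\mu(\e')=\e\in\Gamma$, so (b) yields $\e'\in\Gamma$. To prove the mirror relation I would record the two forbidden factors forced by $\Gamma$ and the initial block $110$ (its third symbol being forced to $0$ by $\e\preceq1(10)^\infty$): maximality ($\si^k\e\preceq\e$) forbids the factor $111$, and minimality ($\ov\e\preceq\si^k\e$) forbids $000$. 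I would then argue by contradiction: if some pair $\e_{2n}\e_{2n+1}$ fails to alternate (equals $00$ or $11$), then using the forbidden factors and, where $\e$ still tracks the bound $1(10)^\infty$, the bound itself, one produces a shift $\si^k\e$ that either exceeds $\e$ or falls below $\ov\e$, contradicting $\e\in\Gamma$. Pinning down this offending shift --- its location is governed by the place where $\e$ last dropped below the bound --- is the technical heart of the lemma and the step I expect to be the main obstacle. Finally (d) is immediate: a periodic $\e\in\Gamma$ with $\e\preceq1(10)^\infty$ is either $(10)^\infty$, of (even) period $2$, or, by (c), equal to $\mu(\e')$ with $\e'\in\Gamma$, and then Lemma~\ref{elementary-gamma}(c) says the smallest period of $\mu(\e')$ is even.
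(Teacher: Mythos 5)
Your parts (a) and (d) are correct and coincide with the paper's own arguments (the block-by-block induction for (a); deducing (d) from (c) together with Lemma~\ref{elementary-gamma}(c)), and the forward half of your (b) --- odd shifts via $\si^{2k+1}\mu(\e)=\si\mu(\si^k\e)$ and order-preservation, even shifts via the identity $\si^{2k}\mu(\e)=\ov{\e_k}\cdot\si\mu(\si^k\e)$ with a case split on the prepended symbol --- is a sound, mildly repackaged version of what the paper does. But the converse half of (b) contains a genuine gap: ``the whole chain is equivalent'' does not follow from monotonicity. To pull an inequality back through the strictly order-preserving map $\si\mu$ you must first strip the prepended symbols, and this works only when those symbols match. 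Concretely, from $\mu(\e)\in\Gamma$ your even-shift identity yields $\si^m\e\preceq\e$ when $\e_m=0$, and $\ov\e\preceq\si^m\e$ when $\e_m=1$, but the complementary bounds (e.g.\ $\si^m\e\preceq\e$ in the hard case $\e_m=\e_{m+1}=1$) do not drop out of any single shift inequality for $\mu(\e)$. The paper closes exactly this case by tracing the run of $1$'s (resp.\ $0$'s) containing position $m+1$ back to the position $\ell$ with $\e_{\ell-1}=0$ (resp.\ $=1$), applying the even shift inequality $\si^{2\ell-2}\mu(\e)\preceq\mu(\e)$ --- where the prepended symbols do match --- to obtain $\si^{\ell-1}\e\preceq\e$, and then comparing $\si^m\e$ with $\si^{\ell-1}\e$ through the lengths of their leading runs. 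This argument, roughly half the paper's proof of the lemma, is simply absent from your proposal.

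The second gap you flag yourself: in (c), your reduction of ``$\e$ lies in the image of $\mu$'' to the mirror relation $\e_{2n+1}=\ov{\e_{2n}}$ for all $n\ge1$ (together with $\e_1=1$) is correct and is an equivalent reformulation of what must be shown, but you do not prove the mirror relation --- you explicitly defer ``pinning down the offending shift'' as the main obstacle, and that obstacle is precisely the technical heart of the lemma. The paper proceeds constructively rather than by contradiction: from $\e\prec 1(10)^{\infty}$ and the exclusion of the factors $111$ and $000$, it derives the factorization $\e=1(10)^{i_1}(01)^{j_1}1(10)^{i_2}(01)^{j_2}1\cdots$ with $1\le i_k\le i_1$ and $0\le j_k\le i_1$, the key step being that the minimality inequality $\si^{2i_1}\e\succeq\ov\e$ forces $j_1\le i_1$ \emph{and} forces the symbol after each block $(01)^{j_k}$ to be $1$ --- which is exactly what keeps the even/odd mirror alignment intact; the factorization is then visibly $\mu(1^{i_1}0^{j_1}1^{i_2}0^{j_2}\cdots)$, and (b) finishes the argument. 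Note finally that your (c) invokes (b) in its converse direction, so the first gap propagates into (c): as it stands, the proposal is a correct plan whose two hardest steps are missing.
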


\begin{proof}
We first prove (a). Define, for $j \geq 0$, the {\it block}
(or {\it word}) $z_j$ by
$$
z_j := (\varepsilon_{jd+1} \ \varepsilon_{jd+2} \cdots \varepsilon_{(j+1)d})
$$
so that we can write
$$
\varepsilon = (\varepsilon_1 \ \varepsilon_2 \cdots \varepsilon_d) \
(\varepsilon_{d+1} \ \varepsilon_{d+2} \cdots \varepsilon_{2d})
\cdots
(\varepsilon_{jd+1} \ \varepsilon_{jd+2} \cdots \varepsilon_{(j+1)d})
\cdots
= z_0 \ z_1 \ z_2 \cdots
$$
We prove by induction on $j \geq 0$ that $z_{2j} = z_0$ and
$z_{2j+1} = \overline{z_0}$. The case $j=0$ is exactly the
hypothesis in (a). Suppose the result is true for some $j \geq 0$,
i.e., that
$$
\varepsilon = (z_0 \ \overline{z_0})^{j+1} \ z_{2j+2} \ z_{2j+3} \cdots
$$
Now
\begin{equation}\label{eq1}
z_{2j+2} \ z_{2j+3} \cdots = \sigma^{2d(j+1)}(\varepsilon)
\preceq \varepsilon = z_0 \ \overline{z_0} \cdots
\end{equation}
hence $z_{2j+2} \preceq z_0$, and
$$
\overline{z_0} \ z_{2j+2} \ z_{2j+3} \cdots =
z_{2j+1} \ z_{2j+2} \ z_{2j+3} \cdots = \sigma^{d(2j+1)}(\varepsilon)
 \succeq \overline{\varepsilon} = (\overline{z_0} \ z_0)^{j+1} \cdots
$$
hence $\overline{z_0} \ z_{2j+2} \succeq \overline{z_0} \ z_0$,
which gives $z_{2j+2} \succeq  z_0$, hence $z_{2j+2} = z_0$. But the
inequality~(\ref{eq1}) now implies $z_{2j+3} \preceq
\overline{z_0}$. On the other hand
$$
z_{2j+3} \ z_{2j+4} \cdots = \sigma^{d(2j+3)}(\varepsilon) \succeq
\overline{\varepsilon} = \overline{z_0} \cdots
$$
hence $z_{2j+3} \succeq \overline{z_0}$ which finally implies that
$z_{2j+3} = \overline{z_0}$.

\bigskip

We now prove (b). Suppose that $\varepsilon$ belongs to $\Gamma$.
Since $\overline{\varepsilon} \preceq \varepsilon$, we have
$\varepsilon_1 = 1$. Applying Lemma~\ref{elementary-gamma}~(b) and
(d) to the inequalities $\overline{\varepsilon} \preceq
\sigma^k(\varepsilon) \preceq \varepsilon$, we get for all $k \geq
0$
$$
\sigma(\overline{\mu(\varepsilon)}) =
\sigma(\mu(\overline{\varepsilon})) \preceq
\sigma(\mu(\sigma^k(\varepsilon))) \preceq
\sigma(\mu(\varepsilon)).
$$
Hence, from Lemma~\ref{elementary-gamma}~(a),
\begin{equation}\label{eq2}
\sigma(\overline{\mu(\varepsilon)}) \preceq
\sigma^{2k+1}(\mu(\varepsilon)) \preceq
\sigma(\mu(\varepsilon)).
\end{equation}
Since $(\mu(\varepsilon))_1 = 1$, we have $\sigma(\mu(\varepsilon))
\preceq \mu(\varepsilon)$ and $\overline{\mu(\varepsilon)} \preceq
\sigma(\overline{\mu(\varepsilon)})$. Hence the above inequalities
yield
$$
\overline{\mu(\varepsilon)} \preceq
\sigma^{2k+1}(\mu(\varepsilon)) \preceq
\mu(\varepsilon).
$$
It remains to prove that for every $k \geq 0$
$$
\overline{\mu(\varepsilon)} \preceq
\sigma^{2k}(\mu(\varepsilon)) \preceq
\mu(\varepsilon).
$$
If $(\sigma^{2k}(\mu(\varepsilon)))_1 = 0$, then
$\sigma^{2k}(\mu(\varepsilon)) \prec \mu(\varepsilon)$ as
$\mu(\varepsilon)_1 = 1$. On the other hand, (\ref{eq2}) implies
that $\sigma(\overline{\mu(\varepsilon)}) \preceq \sigma
(\sigma^{2k}(\mu(\varepsilon)))$. Hence $\overline{\mu(\varepsilon)}
\preceq \sigma^{2k}(\mu(\varepsilon))$, since
$(\overline{\mu(\varepsilon)})_1 = (\sigma^{2k}(\mu(\varepsilon)))_1
\ (=0)$.

\medskip

\noindent If $(\sigma^{2k}(\mu(\varepsilon)))_1 = 1$, then
$\overline{\mu(\varepsilon)} \prec \sigma^{2k}(\mu(\varepsilon))$ as
$\overline{\mu(\varepsilon)}_1 = 0$. On the other hand, (\ref{eq2})
implies that $\sigma(\sigma^{2k}(\mu(\varepsilon))) \preceq
\sigma(\mu(\varepsilon))$. Hence $\sigma^{2k}(\mu(\varepsilon))
\preceq \mu(\varepsilon)$, since $(\sigma^{2k}(\mu(\varepsilon)))_1
= (\mu(\varepsilon))_1 \ (=1)$.

\bigskip

Now suppose that $\mu(\varepsilon)$ belongs to $\Gamma$, and that
$\varepsilon_1=1$. We clearly have $\overline{\varepsilon} \prec
\varepsilon$. It thus suffices to prove that, for any $k \geq 1$,
both inequalities $\sigma^k(\varepsilon) \preceq \varepsilon$ and
$\overline{\varepsilon} \preceq \sigma^k(\varepsilon)$ hold.

\begin{itemize}
\item
Let us prove the first inequality. Let $k \geq 1$. If
$\varepsilon_{k+1} = 0$, we have $\sigma^k(\varepsilon) =
\varepsilon_{k+1} \varepsilon_{k+2} \cdots \prec \varepsilon$. If
$\varepsilon_{k+1} = 1$, then either $\varepsilon_j = 1$ for all $j
\in [1, k+1]$ and $\sigma^k(\varepsilon) \preceq \varepsilon$ since
$\sigma^k(\varepsilon)$ begins with less $1$'s than $\varepsilon$,
or there exists $\ell \in [2, k+1]$ such that $\varepsilon_j = 1$
for all $j \in [\ell, k+1]$ and $\varepsilon_{\ell-1} = 0$. But then
$$
1 \ \varepsilon_{\ell} \ \overline{\varepsilon_{\ell}} \
\varepsilon_{\ell+1} \ \overline{\varepsilon_{\ell+1}} \cdots =
\overline{\varepsilon_{\ell-1}} \ \varepsilon_{\ell} \
\overline{\varepsilon_{\ell}} \ \varepsilon_{\ell+1} \
\overline{\varepsilon_{\ell+1}} \cdots = \sigma^{2\ell-2}(\mu(\varepsilon))
\preceq \mu(\varepsilon) = 1 \ \varepsilon_1 \ \overline{\varepsilon_1} \
\varepsilon_2 \ \overline{\varepsilon_2} \cdots
$$
hence
$$
\varepsilon_{\ell} \ \overline{\varepsilon_{\ell}} \
\varepsilon_{\ell+1} \ \overline{\varepsilon_{\ell+1}} \cdots \preceq
\varepsilon_1 \ \overline{\varepsilon_1} \
\varepsilon_2 \ \overline{\varepsilon_2} \cdots
$$
which easily implies
$$
\sigma^{\ell-1}(\varepsilon) = \varepsilon_{\ell} \ \varepsilon_{\ell+1}
\cdots \preceq \varepsilon_1 \ \varepsilon_2 \cdots = \varepsilon.
$$
This in turn implies $\sigma^k(\varepsilon) \preceq \varepsilon$,
since the sequence $\sigma^k(\varepsilon)$, beginning with less
$1$'s than $\sigma^{\ell-1}(\varepsilon)$, is smaller than
$\sigma^{\ell-1}(\varepsilon)$.

\item
Let us prove the second inequality. Let $k \geq 1$. If
$\varepsilon_{k+1} = 1$, we have $\sigma^k(\varepsilon) =
\varepsilon_{k+1} \varepsilon_{k+2} \cdots \succ
\overline{\varepsilon}$. If $\varepsilon_{k+1} = 0$, then there
exists $\ell \in [2, k+1]$ such that $\varepsilon_j = 0$ for all $j
\in [\ell, k+1]$ and $\varepsilon_{\ell-1} = 1$ (remember that
$\varepsilon_1 = 1$). But then
\[
0 \ \varepsilon_{\ell} \ \overline{\varepsilon_{\ell}} \
\varepsilon_{\ell+1} \ \overline{\varepsilon_{\ell+1}} \cdots =
\overline{\varepsilon_{\ell-1}} \ \varepsilon_{\ell} \
\overline{\varepsilon_{\ell}} \ \varepsilon_{\ell+1} \
\overline{\varepsilon_{\ell+1}} \cdots =
\sigma^{2\ell-2}(\mu(\varepsilon)) \succeq
\overline{\mu(\varepsilon)} = 0 \ \overline{\varepsilon_1} \
\varepsilon_1 \ \overline{\varepsilon_2} \ \varepsilon_2 \cdots
\]
hence
$$
\varepsilon_{\ell} \ \overline{\varepsilon_{\ell}} \
\varepsilon_{\ell+1} \ \overline{\varepsilon_{\ell+1}} \cdots \succeq
\overline{\varepsilon_1} \ \varepsilon_1 \
\overline{\varepsilon_2} \ \varepsilon_2 \cdots
$$
which easily implies
$$
\sigma^{\ell-1}(\varepsilon) = \varepsilon_{\ell} \
\varepsilon_{\ell+1} \cdots \succeq \overline{\varepsilon_1} \
\overline{\varepsilon_2} \cdots = \overline{\varepsilon}.
$$
This in turn implies $\sigma^k(\varepsilon) \succeq
\overline{\varepsilon}$, since the sequence $\sigma^k(\varepsilon)$,
beginning with less $0$'s than $\sigma^{\ell-1}(\varepsilon)$, is
larger than $\sigma^{\ell-1}(\varepsilon)$.

\end{itemize}

\bigskip

To finish with the proof of Lemma~\ref{other-gamma} it suffices to
prove (c): namely (d) is a consequence of (c) and of
Lemma~\ref{elementary-gamma}~(c). So, let $\varepsilon$ be a
sequence in $\Gamma$ with $\varepsilon \preceq 1(10)^{\infty}$.
Since $1(10)^{\infty} = \mu(1^{\infty})$, we may suppose that
$\varepsilon \prec 1(10)^{\infty}$. We thus have
$$
0(01)^{\infty} \prec \overline{\varepsilon} \preceq \sigma^k(\varepsilon)
\preceq \varepsilon \prec 1(10)^{\infty}
$$
for all $k \geq 0$. This implies in particular that $\varepsilon$
cannot contain three consecutive $1$'s nor three consecutive $0$'s.
The sequence $\varepsilon$ must begin with $1$. If $\varepsilon = 10
\cdots$, then $\varepsilon = (10)^{\infty}$ from
Lemma~\ref{other-gamma}~(a), which is excluded. Thus $\varepsilon =
11\cdots$, hence $\varepsilon = 110 \cdots$. From the inequality
$\varepsilon \prec 1(10)^{\infty}$, there is a maximal integer $i_1
\geq 1$ such that $\varepsilon = 1(10)^{i_1}\cdots$. Hence
$\varepsilon = 1(10)^{i_1}01\cdots$ (recall that $\varepsilon $ does
not contain three consecutive $0$'s). Then there exists a maximal
integer $j_1 \geq 1$ such that $\varepsilon = 1(10)^{i_1}(01)^{j_1}
\cdots$. But $0(01)^{j_1} \cdots = \sigma^{2i_1} \varepsilon \succeq
\overline{\varepsilon}= 0(01)^{i_1}(10)^{j_1} \cdots$. This implies
$j_1 \leq i_1$ and the next term in $\sigma^{2i_1} \varepsilon$ must
be $1$, i.e., $\sigma^{2i_1} \varepsilon = 0(01)^{j_1} 1 \cdots$.
Finally $\varepsilon = 1(10)^{i_1}(01)^{j_1} 1 \cdots$, with $0 \leq
j_1 \leq i_1$. A similar reasoning can be applied to
$\sigma^{2i_1+2j_1} \varepsilon = 11 \cdots$, yielding
$\sigma^{2i_1+2j_1} \varepsilon =1(10)^{i_2}(01)^{j_2}1\cdots$ with
$1 \leq i_2 \leq i_1$ and $0 \leq j_2 \leq i_1$. Thus, iterating, we
get
$$
\varepsilon =
1(10)^{i_1}(01)^{j_1} (10)^{i_2}(01)^{j_2} (10)^{i_3}(01)^{j_3} \cdots
$$
with $1 \leq i_k \leq i_1$ and $0 \leq j_k \leq i_1$. This implies $$
\varepsilon = \mu(1^{i_1} 0^{j_1} 1^{i_2} 0^{j_2} 1^{i_3} 0^{j_3} \cdots).
$$
The sequence $1^{i_1} 0^{j_1} 1^{i_2} 0^{j_2} 1^{i_3} 0^{j_3} \cdots$ belongs
to $\Gamma$ from Lemma~\ref{other-gamma}~(b).
\end{proof}

\begin{rmk}
In Lemma~\ref{other-gamma} Part~(a) is \cite[Lemme~2~b,
p.~27]{A-etat}. The ``only if'' part of (b) is on Page 26-06 in
\cite{A-bordeaux}. The proof of Part~(c) is inspired by the proof of
the partly more general ``Lemme~1'' on page 47 of \cite{A-etat}.
\end{rmk}

We are now ready to present and prove the following construction of
the sequences ($a_k)$.

\begin{prop}\label{smallest}
Denote by $a_k$ the smallest periodic sequence belonging to $\Gamma$
whose smallest period is equal to $k \geq 1$. Let $k=2^n(2m+1)$,
then

$$
a_k =  \left\{
\begin{array}{ll}
1^{\infty} &\mbox{\rm if } m=n=0 \ \mbox{\rm (i.e., $k=1$)}, \\
\mu^n(0^{\infty}) &\mbox{\rm if $m = 0$ and $n \geq 1$}, \\
\mu^n((1(10)^m)^{\infty}) \ &\mbox{\rm if } m \geq 1.
\end{array}
\right.
$$

\end{prop}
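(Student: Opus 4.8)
The plan is to reduce the whole statement to two base computations — the period-one case and the odd-period case — together with a single ``period-doubling'' identity $a_{2T}=\mu(a_T)$ valid for $T\ge2$, after which the general formula follows by induction on the power of $2$ dividing $k$. Concretely, once I know $a_1=1^\infty$, $a_2=(10)^\infty=\mu(0^\infty)$, $a_{2m+1}=(1(10)^m)^\infty$ for $m\ge1$, and the doubling identity, I can iterate: for $n\ge1$, $a_{2^n}=\mu(a_{2^{n-1}})=\cdots=\mu^{n-1}(a_2)=\mu^n(0^\infty)$, and for $m\ge1$, $a_{2^n(2m+1)}=\mu^n(a_{2m+1})=\mu^n((1(10)^m)^\infty)$, which is exactly the claimed formula. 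Note that $\mu(a_1)=1(10)^\infty$ is not periodic, so the even tower must instead be started from the representative $0^\infty$ (not itself in $\Gamma$); this is why the case $m=0$ is recorded separately.

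For the doubling identity the key preliminary observation is that any periodic $\e\in\Gamma$ with smallest period $T\ge2$ satisfies $\e_T=0$. Indeed, $\e$ is self-maximal with $\e_1=1$; if $\e_T=1$ then $\si^{T-1}\e$ is the symbol $1$ followed by $\e$, and self-maximality $\si^{T-1}\e\preceq\e$ is violated at the first occurrence of the factor $10$ in $\e$. Since $\e_1=1$, the absence of such a factor forces $\e=1^\infty$, contradicting $T\ge2$. Granting this, fix $T\ge2$. Then $(a_T)_1=1$ and $(a_T)_T=0$, so by Lemma~\ref{other-gamma}(b) and Lemma~\ref{elementary-gamma}(c), $\mu(a_T)\in\Gamma$ has smallest period $2T$; moreover $a_T\neq1^\infty$ gives $a_T\prec1^\infty$, hence $\mu(a_T)\prec\mu(1^\infty)=1(10)^\infty$ by Lemma~\ref{elementary-gamma}(d). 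Therefore $a_{2T}\preceq\mu(a_T)\prec1(10)^\infty$. Since $a_{2T}\in\Gamma$ has period $2T\ge4$, it is not $(10)^\infty$, so Lemma~\ref{other-gamma}(c) writes $a_{2T}=\mu(\e')$ with $\e'\in\Gamma$; by Lemma~\ref{elementary-gamma}(c) this $\e'$ has smallest period $T$, so $\e'\succeq a_T$ by minimality of $a_T$, and applying monotonicity of $\mu$ (Lemma~\ref{elementary-gamma}(d)) gives $a_{2T}=\mu(\e')\succeq\mu(a_T)$. The two inequalities yield $a_{2T}=\mu(a_T)$.

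It remains to settle the base cases. The period-one case is immediate, since $1^\infty$ is the only period-one element of $\Gamma$, and $a_2=(10)^\infty$ is the unique period-two element, equal to $\mu(0^\infty)$. The odd case $k=2m+1$ with $m\ge1$ is where the real work lies. First one checks directly that $(1(10)^m)^\infty\in\Gamma$ — its shifts never begin with $00$ (the $0$'s are isolated) and begin with $11$ only at multiples of the period, giving $\ov\e\prec\si^j\e\preceq\e$ — and that its smallest period is exactly $2m+1$ (the factor $11$ occurs only at positions $\equiv1$ modulo $2m+1$). For minimality I would argue as follows: by Lemma~\ref{other-gamma}(d) a periodic element of $\Gamma$ of odd smallest period cannot satisfy $\e\preceq1(10)^\infty$, so every such $\e$ is $\succ1(10)^\infty$ and hence begins with $1(10)^i11$ for some $i\ge0$. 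A period-$(2m+1)$ sequence cannot begin with $1(10)^{m+1}$ (this would force $\e_{2m+3}=0$, whereas periodicity forces $\e_{2m+3}=\e_2=1$), so necessarily $i\le m$; the unique period-$(2m+1)$ sequence with $i=m$ is $(1(10)^m)^\infty$ itself, while any $i<m$ gives a sequence that already exceeds $(1(10)^m)^\infty$ at position $2i+3$. Hence $(1(10)^m)^\infty$ is the lexicographically smallest period-$(2m+1)$ element of $\Gamma$.

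The main obstacle is the odd-period minimality argument of the last paragraph: translating ``odd smallest period'' into the prefix form $1(10)^i11$ via Lemma~\ref{other-gamma}(d), and then extracting the sharp bound $i\le m$ from periodicity, is the delicate point on which everything hinges. The doubling step, by contrast, is essentially bookkeeping with the established properties of $\mu$ and $\Gamma$, the only genuine subtlety being the exceptional transition $a_1\to a_2$ discussed in the first paragraph.
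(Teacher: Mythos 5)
Your proof is correct and takes essentially the same route as the paper's: identical base cases (the odd case settled, as in the paper, via Lemma~\ref{other-gamma}~(d) followed by a prefix comparison with $1(10)^m$) and the same inductive passage through $\mu$ using Lemma~\ref{other-gamma}~(b), (c) and Lemma~\ref{elementary-gamma}~(c), (d). The only difference is organizational: you package the paper's two parallel inductions (for $m=0$ and $m\ge1$) into a single doubling identity $a_{2T}=\mu(a_T)$ for $T\ge2$, obtaining the needed bound $a_{2T}\prec 1(10)^\infty$ from $a_T\prec 1^\infty$ and $\mu(1^\infty)=1(10)^\infty$ instead of the paper's comparison with the shifted Thue--Morse sequence $L$.
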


\begin{proof}

\begin{itemize}

\item The case $m=n=0$ is trivial.

\item Let us address the case $m=0$. For $n=1$, we have $k=2$, and it is
easy to see that $a_2 = (10)^{\infty} = \mu(0^{\infty})$. Suppose that
$a_{2^n} = \mu^n(0^{\infty})$ for some $n \geq 1$. Then $\mu(a_{2^n})$
has smallest period $2^{n+1}$, hence $a_{2^{n+1}} \preceq \mu(a_{2^n}) =
\mu^{n+1}(0^{\infty})$. Now $0^{\infty} \prec L \prec 1(10)^{\infty}$, hence
$a_{2^{n+1}} \preceq \mu^{n+1}(0^{\infty}) \prec \mu^{n+1}(L) = L
\prec 1(10)^{\infty}$.  This implies, from Lemma~\ref{other-gamma}(c), the
existence of a sequence $z$ in $\Sigma$ such that $a_{2^{n+1}} = \mu(z)$.
We have that $z$ is periodic, and its smallest period is $2^n$. Furthermore
$\mu(z) \preceq \mu^{n+1}(0^{\infty})$, hence $z \preceq \mu^n(0^{\infty})$.
This forces $z = \mu^n(0^{\infty})$, thus $a_{2^{n+1}} = \mu(z) =
\mu^{n+1}(0^{\infty})$.

\item Let us prove the result for $m \geq 1$ by induction on $n \geq 0$.
Take first $n=0$. Let $\varepsilon$ be a sequence in $\Gamma$ with smallest
period $(2m+1)$ for some $m \geq 1$. From Lemma~\ref{other-gamma}~(d), we
must have $\varepsilon \succ 1(10)^{\infty}$. Hence the prefix of $\varepsilon$
of length $2m+1$ must be larger than or equal to the prefix of $1(10)^{\infty}$
of length $2m+1$, i.e., $1(10)^m$. The sequence $\varepsilon$ being periodic with
smallest period $(2m+1)$, this implies that $\varepsilon \succeq (1(10)^m)^{\infty}$.
But this last sequence clearly belongs to $\Gamma$, which implies that it is the
smallest sequence in $\Gamma$ that has smallest period $2m+1$.

\item
Now, suppose the result is true for some $n \geq 0$. Let
$\varepsilon$ be the smallest sequence in $\Gamma$ whose smallest
period is $2^{n+1}(2m+1)$. Since the sequence
$\mu^{n+1}((1(10)^m)^{\infty})$ is in $\Gamma$ (use
Lemma~\ref{other-gamma}~(b)) and has smallest period
$2^{n+1}(2m+1)$ (use Lemma~\ref{elementary-gamma}~(c); note that we
{\it need} $m \neq 0$), we have $\varepsilon \preceq
\mu^{n+1}((1(10)^m)^{\infty})$. Since this clearly implies
$\varepsilon \preceq 1(10)^{\infty}$, Lemma~\ref{other-gamma}~(c)
gives the existence of a sequence $\varepsilon'$ in $\Gamma$ such
that $\varepsilon = \mu(\varepsilon')$. The inequality
$\mu(\varepsilon') = \varepsilon \preceq
\mu^{n+1}((1(10)^m)^{\infty})$ implies, using
Lemma~\ref{elementary-gamma}~(d), that $\varepsilon' \preceq
\mu^n((1(10)^m)^{\infty})$. But $\varepsilon'$ belongs to $\Gamma$
and has smallest period $2^n(2m+1)$ (use
Lemma~\ref{elementary-gamma}~(c)). Hence, from the induction
hypothesis, $\varepsilon' = \mu^n((1(10)^m)^{\infty})$. Thus,
$\varepsilon = \mu(\varepsilon') = \mu^{n+1}((1(10)^m)^{\infty})$.

\end{itemize}
\end{proof}

We conclude the section with an explicit formula for the $a_k$ via
fragments of the Thue-Morse sequence.

\begin{prop}\label{prop:explicit}

The sequences $(a_k)$ are related to the Thue-Morse sequence as follows.
Let $k = 2^n(2m+1)$. Then
$$
a_k =  \left\{
\begin{array}{ll}
\m_1^{\infty} &\mbox{\rm if } m=n=0 \ \mbox{\rm (i.e., $k=1$)}, \\
(\m_1 \m_2 \m_3 \cdots \m_{2^n-1} \overline{\m_{2^n}})^{\infty} \
&\mbox{\rm if $m = 0$ and $n \geq 1$}, \\
(\m_1 \m_2 \m_3 \cdots \m_{3\cdot2^n}(\m_1 \cdots
\overline{\m_{2^{n+1}}})^{m-1})^{\infty} &\mbox{\rm if } m \geq 1.
\end{array}
\right.
$$
\end{prop}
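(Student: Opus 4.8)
The plan is to translate the closed forms for $a_k$ obtained in Proposition~\ref{smallest} into Thue--Morse fragments using the single identity $0\,\mu^k(\e)=\varphi^k(0\e)$ established in the proof of Lemma~\ref{elementary-gamma}~(e), together with the fact (also recorded there) that $0L=(\m_n)_{n\ge0}$ is the complete Thue--Morse sequence. First I would collect the self-similarity facts to be used repeatedly: writing $P:=\varphi^n(0)=\m_0\m_1\cdots\m_{2^n-1}$, one has $\varphi^n(1)=\ov P$, and more generally $\m_{2^N+j}=\ov{\m_j}$ for $0\le j<2^N$ (equivalently $\varphi^{N+1}(0)=\varphi^N(0)\,\ov{\varphi^N(0)}$); in particular $\ov{\m_{2^N}}=\m_0=0$. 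These are immediate from the definition of $\varphi$ and let me pass freely between bars and index-shifts by powers of $2$.

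For the case $m=0$, $n\ge1$ I would start from $a_{2^n}=\mu^n(0^\infty)$ and apply the identity to get $0\,a_{2^n}=\varphi^n(0^\infty)=(\varphi^n(0))^\infty=(\m_0\m_1\cdots\m_{2^n-1})^\infty$. Deleting the leading $\m_0$ and using periodicity rotates the block, so $a_{2^n}=(\m_1\cdots\m_{2^n-1}\m_0)^\infty$; replacing $\m_0$ by $\ov{\m_{2^n}}$ yields the stated formula.

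For $m\ge1$ I would set $w:=1(10)^m$ (of length $2m+1$), so that $a_k=\mu^n(w^\infty)$ with $k=2^n(2m+1)$, and compute $0\,a_k=\varphi^n(0\,w^\infty)=P\,(\varphi^n(w))^\infty$ with $\varphi^n(w)=\ov P(\ov PP)^m$. Deleting the leading $\m_0$ of $P$ and retaining one period of length $k$ gives $a_k=(\m_1\cdots\m_{2^n-1}\,\ov P\,(\ov PP)^{m-1}\ov P\,\m_0)^\infty$. Writing $A:=\m_1\cdots\m_{2^{n+1}-1}$, $B:=\ov PP$, $C:=\ov P\m_0$ and $R:=A\m_0$, this period block is $A\,B^{m-1}C$, whereas the target block is $A\,C\,R^{m-1}$: indeed $\m_1\cdots\m_{2^n-1}\,\ov P=A$ and $\m_{2^{n+1}}\cdots\m_{3\cdot2^n}=\ov P\m_0=C$ (both instances of $\m_{2^N+j}=\ov{\m_j}$, using $\ov{\m_{2^n}}=\m_0$), so the prefix $\m_1\cdots\m_{3\cdot2^n}$ equals $AC$, while $R=\m_1\cdots\m_{2^{n+1}-1}\ov{\m_{2^{n+1}}}=A\m_0$ since $\ov{\m_{2^{n+1}}}=\m_0$.

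The one substantive point -- and the step I expect to be the main obstacle, though it turns out to be short -- is reconciling these two factorisations $A\,B^{m-1}C$ and $A\,C\,R^{m-1}$, which use different (conjugate) building blocks. I would prove the commutation relation $BC=CR$ directly: both sides expand to $\ov P\,P\,\ov P\,\m_0$, the right-hand one because $\m_0A=\varphi^{n+1}(0)=P\ov P$ by definition of $A$. An immediate induction then gives $B^{m-1}C=C\,R^{m-1}$, whence $A\,B^{m-1}C=A\,C\,R^{m-1}$, which is exactly the claimed formula. The only care needed elsewhere is the bookkeeping of lengths (each $\varphi^n$-block has length $2^n$ and $|\varphi^n(w)|=k$) and the consistent use of the bar/index-shift identities; no genuine difficulty remains once $BC=CR$ is in hand.
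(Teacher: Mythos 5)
Your proposal is correct and takes essentially the same route as the paper's own proof: both translate the closed forms of Proposition~\ref{smallest} via the identity $0\mu^k(\e)=\varphi^k(0\e)$, use the Thue--Morse self-similarity $\varphi^{n+1}(0)=\varphi^n(0)\,\ov{\varphi^n(0)}$ (equivalently $\m_{2^N+j}=\ov{\m_j}$), and finish with an elementary rearrangement of alternating blocks. Your commutation $BC=CR$ is exactly the paper's identity $(\ov{W_n}W_n)^{m-1}\ov{W_n}=\ov{W_n}(W_n\ov{W_n})^{m-1}$ written in conjugated form, so the two arguments coincide up to presentation.
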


\begin{proof}
We may assume that $(m,n) \neq (0,0)$. We then note that (see the
proof of Lemma~\ref{elementary-gamma}~(e)), for any sequence
$\varepsilon$ in $\Sigma$ and for any $n \geq 0$, we have
$0\mu^n(\varepsilon) = \varphi^n(0\varepsilon)$. Hence to prove the
proposition, it suffices to show that, for any $n \geq 0$,
$$
\begin{array}{lll}
\varphi^n(0^{\infty}) &=&
0(\m_1 \m_2 \m_3 \cdots \m_{2^n-1} \overline{\m_{2^n}})^{\infty}, \\
\varphi^n(0(1(10)^m)^{\infty}) &=&
0(\m_1 \m_2 \m_3 \cdots \m_{3\cdot2^n}
(\m_1 \cdots \overline{\m_{2^{n+1}}})^{m-1})^{\infty}
\ \ \forall m \geq 1.
\end{array}
$$
Remembering that $\m_n$ is nothing but the parity of the sum of the
binary digits of $n$, we clearly have $\m_{2^n} = 1$, $0(\m_1 \m_2
\m_3 \cdots \m_{2^n-1} \overline{\m_{2^n}})^{\infty} = (\m_0 \m_2
\m_3 \cdots \m_{2^n-1})^{\infty}$, and
$$
\begin{array}{lll}
\m_1 \m_2 \m_3 \cdots \m_{3\cdot2^n} &=& \m_1 \m_2 \m_3 \cdots \m_{2^{n+1}}
\m_{2^{n+1}+1} \cdots \m_{2^{n+1}+2^n} \\
&=& \m_1 \m_2 \m_3 \cdots \m_{2^{n+1}-1} \overline{\m_0 \m_1 \cdots \m_{2^n-1}}
\ 0
\end{array}
$$
Denoting by $W_n$ the word (of length $2^n$) $W_n := \m_0 \m_1
\cdots \m_{2^n-1}$, what we have to prove boils down to
$$
\begin{array}{lll}
\varphi^n(0^{\infty}) &=& W_n^{\infty} \\
\varphi^n(0(1(10)^m)^{\infty}) &=&
(W_{n+1} \ \overline{W_n} \ W_{n+1}^{m-1})^{\infty}
\ \ \forall m \geq 1.
\end{array}
$$
Using the easily proven relation $W_{n+1} = \varphi(W_n)$ we have
$W_n = \varphi^n(0)$, $\overline{W_n} = \varphi^n(1)$, and $W_{n+1}
= W_n \overline{W_n}$, thus
$$
\varphi^n(0^{\infty}) = (\varphi^n(0))^{\infty} = W_n^{\infty}
$$
and
$$
\begin{array}{lll}
\varphi^n(0(1(10)^m)^{\infty}) &=&
\varphi^n(0)\varphi^n[(1(10)^m)^{\infty}]
= \varphi^n(0)[\varphi^n(1)(\varphi^n(10))^m]^{\infty} \\
&=& \varphi^n(0)[\varphi^n(1)(\varphi^n(1)\varphi^n(0))^m]^{\infty} \\
&=& W_n [\overline{W_n} \ (\overline{W_n} W_n)^m]^{\infty}
= W_n [\overline{W_n} \ (\overline{W_n} W_n)^{m-1} \overline{W_n} W_n]^{\infty} \\
&=& (W_n \overline{W_n} \ (\overline{W_n} W_n)^{m-1} \overline{W_n})^{\infty}
= (W_n \overline{W_n} \ \overline{W_n} (W_n \overline{W_n})^{m-1})^{\infty} \\
&=& (W_{n+1} \overline{W_n} W_{n+1}^{m-1})^{\infty}.
\end{array}
$$
\end{proof}

For the table of the first 8 values of $\be_n$ see Table~\ref{table}
below.

\begin{table}[ht]
\centerline{
\begin{tabular}{c|l|l|c|c}
       $\be_n$ & $d(\be_n)$ & minimal polynomial & numerical value &
       below $\be_{KL}$?\\
\hline $n=2$ & 11 & $x^2-x-1$ & 1.61803 & yes\\
\hline $n=3$ & 111 & $x^3-x^2-x-1$ & 1.83929 & no\\
\hline $n=4$ & 1101 & $x^3-2x^2+x-1$ & 1.75488 & yes\\
\hline $n=5$ & 11011 & $x^5-x^4-x^3-x-1$ & 1.81240 & no\\
\hline $n=6$ & 110101 & $x^6-x^5-x^4-x^2-1$ & 1.78854 & no \\
\hline $n=7$ & 1101011 & $x^6-2x^5+x^4-x^3-1$ & 1.80509 & no \\
\hline $n=8$ & 11010011 & $x^5-2x^4+x^2-1$ & 1.78460 & yes \\
\end{tabular}
}

\vskip0.5truecm

\caption{The table of $\beta_n$ for small values of $n$}
\label{table}
\end{table}

\section{Impossibility of continuous extension of
$F_\be$}\label{sec:extension}

Recall that the map $F_\be$ acts on a nowhere dense subset of
$I_\be$. It would be tempting to try to explain the Sharkovski\u{\i}
order in our model via some extension of $F_\be$ to the whole
interval and then applying the classical Sharkovski\u{\i} theorem to
that extended map. In this section we show that this is in fact
impossible.

\begin{thm}\label{thm:no-ext}
Let, as above, $\be_4\approx1.75488$ denote the unique root of
$x^3=2x^2-x+1$ lying in $(1,2)$. Assume we have $\be\in(\be_4,2)$;
then any continuous map $S_\be:I_\be\to I_\be$ such that
$S_\be|_{X_\be}=F_\be$ has a $k$-cycle for all $k\in {\mathbb{N}}$
provided $S_\be$ is monotonic on $[0,1/\be]$.
\end{thm}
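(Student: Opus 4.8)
The plan is to reduce everything, via Sharkovski\u{\i}'s Theorem, to producing a single point of smallest period $3$ for $S_\be$: since $3$ is the maximal element of the Sharkovski\u{\i} ordering, a $3$-cycle of the continuous self-map $S_\be$ forces a $k$-cycle for every $k\in{\mathbb{N}}$, which is precisely the assertion. To start, I would pin down a concrete $4$-cycle of $F_\be$ living inside $X_\be$. By Proposition~\ref{smallest} the least period-$4$ sequence in $\Gamma$ is $a_4=\mu^2(0^\infty)=(1100)^\infty$, and by Proposition~\ref{main} we have $\be_4=\gamma_4$ with $a_4=d'(\be_4)$. The first part of the proof of Proposition~\ref{main} shows that for every $\be>\be_4$ the sequence $d'(\be_4)=(1100)^\infty$ belongs to $\Si_\be$; hence so do its four cyclic shifts $(1100)^\infty,(1001)^\infty,(0011)^\infty,(0110)^\infty$, and their $\pi_\be$-images form a $4$-cycle of $F_\be$ inside $X_\be$. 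Because $S_\be|_{X_\be}=F_\be$, these four points also constitute a $4$-cycle of $S_\be$.

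Next I would record their geometry. Writing them in increasing order as $p_1<p_2<p_3<p_4$, the fact that $\pi_\be$ is order-preserving (lexicographic order corresponds to the order on the reals) identifies
\[
p_1=\pi_\be((0011)^\infty),\quad p_2=\pi_\be((0110)^\infty),\quad p_3=\pi_\be((1001)^\infty),\quad p_4=\pi_\be((1100)^\infty),
\]
and the shift then realises the orbit as $p_1\to p_2\to p_4\to p_3\to p_1$ under $S_\be$. Since $p_1,p_2$ begin with $0$ they lie in $[0,1/\be)$, whereas $p_3,p_4$ begin with $1$ and thus lie to the right of the central gap $[1/\be,1/(\be(\be-1))]$. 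This is where I would invoke the monotonicity hypothesis: as $S_\be$ is monotone on $[0,1/\be]$ and coincides there with the increasing map $x\mapsto\be x$ on the (at least two) points $p_1,p_2\in X_\be\cap[0,1/\be)$, it must be increasing on $[0,1/\be]$, so on $A:=[p_1,p_2]$ one gets $S_\be(A)=[S_\be(p_1),S_\be(p_2)]=[p_2,p_4]$.

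The core of the argument will be a covering (Markov-graph) analysis on the three intervals $A=[p_1,p_2]$, $B=[p_2,p_3]$, $C=[p_3,p_4]$. From the preceding step $S_\be(A)=[p_2,p_4]\supseteq B\cup C$. The interval $B$ straddles the gap with $S_\be(p_2)=p_4$ and $S_\be(p_3)=p_1$, so continuity gives $S_\be(B)\supseteq[p_1,p_4]\supseteq A\cup B\cup C$; likewise $S_\be(p_3)=p_1$ and $S_\be(p_4)=p_3$ give $S_\be(C)\supseteq[p_1,p_3]\supseteq A\cup B$. These yield the loop $A\to B\to C\to A$, and the standard itinerary lemma (pull back $A$ through the loop and apply the intermediate value theorem) produces $x\in A$ with $S_\be(x)\in B$, $S_\be^2(x)\in C$ and $S_\be^3(x)=x$. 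As $A\cap C=\emptyset$, a point of period~$1$ would have to lie in $A\cap B\cap C=\emptyset$, so $x$ has smallest period $3$; Sharkovski\u{\i}'s Theorem then delivers $k$-cycles for all $k\in{\mathbb{N}}$.

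The step I expect to be the main obstacle is making the covering relations watertight, above all the ``long'' covering $S_\be(B)\supseteq[p_1,p_4]$ across the central gap, where the only information available about $S_\be$ is continuity together with the two endpoint values $S_\be(p_2)=p_4$, $S_\be(p_3)=p_1$; the argument must not secretly assume any control of $S_\be$ on the gap itself. A second point needing care is confirming that $(1100)^\infty$ is genuinely a unique $\be$-expansion for \emph{every} $\be>\be_4$ rather than only at the threshold $\be_4$, which is why I would lean on the monotonicity/admissibility bookkeeping already carried out in the proof of Proposition~\ref{main} (equivalently, on Lemma~\ref{crit} applied to the shifts of $(1100)^\infty$). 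The remaining ingredients---order-preservation of $\pi_\be$ and the extraction of a point of period exactly $3$ from the loop---are routine.
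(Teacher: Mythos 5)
Your proof is correct, but it takes a genuinely different route from the paper's. Both arguments start from the same $4$-cycle $\pi_\be((0011)^\infty)=p_1$, $\pi_\be((0110)^\infty)=p_2$, $\pi_\be((1001)^\infty)=p_3$, $\pi_\be((1100)^\infty)=p_4$ (whose existence for all $\be>\be_4$ the paper simply asserts, relying on $U_4=(\be_4,2)$; your bookkeeping via Proposition~\ref{main} makes the same justification explicit), and both reduce the theorem to exhibiting a point of smallest period $3$ and then quoting Sharkovski\u{\i}'s classical theorem. But where you set up the three-interval covering loop $A\to B\to C\to A$ and invoke the itinerary lemma, the paper argues more directly: $S_\be^3(p_1)=p_3>p_1$ and $S_\be^3(p_2)=p_1<p_2$, so by the intermediate value theorem $S_\be^3$ has a fixed point $x_*\in(p_1,p_2)$; monotonicity of $S_\be$ on $[0,1/\be]$ (necessarily increasing, as you also note) then gives $S_\be(x_*)\ge S_\be(p_1)=p_2>x_*$, so $x_*$ is not fixed under $S_\be$ and hence has smallest period $3$. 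The instructive difference is where monotonicity enters. In the paper's proof it is essential: the extension is unconstrained on the gaps of the Cantor set $X_\be$ inside $(p_1,p_2)$, so without monotonicity the $S_\be^3$-fixed point produced by the IVT could a priori be an $S_\be$-fixed point. In your proof it is redundant: all three covering relations, including $S_\be(A)\supseteq[p_2,p_4]$, already follow from continuity and the endpoint values alone (the image of an interval is an interval containing both endpoint images), and period $1$ is excluded combinatorially via $A\cap C=\emptyset$ rather than via monotonicity --- you only used the hypothesis to upgrade $S_\be(A)\supseteq[p_2,p_4]$ to an equality, which you never need. So your argument, once that inessential equality is dropped, proves the stronger statement that \emph{every} continuous extension of $F_\be$ with $\be>\be_4$ has cycles of all lengths, with no monotonicity assumption at all; the price is invoking the (standard but heavier) pull-back/itinerary lemma in place of a single application of the IVT. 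Your flagged worry about the ``long'' covering $S_\be(B)\supseteq[p_1,p_4]$ is unfounded for the same reason: continuity on $[p_2,p_3]$ together with $S_\be(p_2)=p_4$ and $S_\be(p_3)=p_1$ is exactly what the IVT consumes, and no control of $S_\be$ on the central gap is ever assumed.
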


\begin{rmk}In particular, this means that
any map of the form
\[
S_\be(x)=\begin{cases} \be x,& 0\le x < \frac1{\be},\\
G(x),& \frac1{\be} \le x \le \frac1{\be(\be-1)}\\
\be x -1, & \frac1{\be(\be-1)} < x \le \frac1{\be-1},
\end{cases}
\]
where $G$ is continuous, and $G\bigl(\frac1{\be}\bigr)=1,\
G\bigl(\frac1{\be(\be-1)}\bigr)=\frac{2-\be}{\be-1}$, has cycles of
any length provided $\be>\be_4$ -- see Figs below. On the other
hand, as we know, if $\be<\be_8$, then $F_\be$ itself has only
cycles of length~2 and 4. This means that there is no immediate
connection between the classical Sharkovski\u{\i} theorem and our
Theorem~\ref{thm:main}. (See Section~\ref{sec:trapezoidal} for a
less immediate connection.)

\begin{figure}
\centering \scalebox{0.6} {\includegraphics{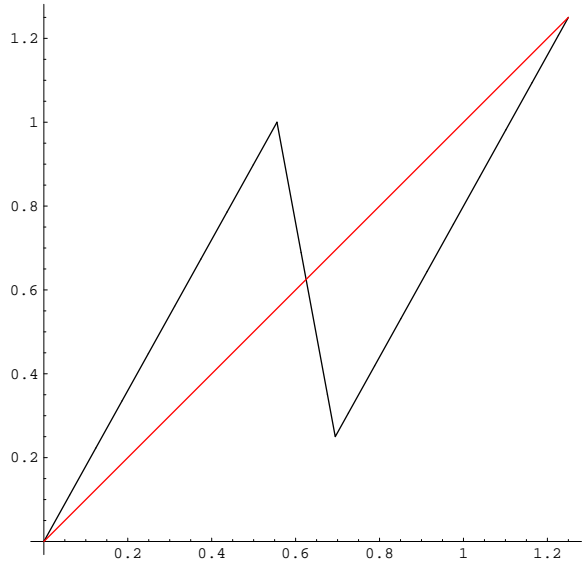}} \caption{A
continuous extension of $F_\be$ for $\be=1.8.$}\label{fig:sg1}
\end{figure}

\begin{figure}
\centering \scalebox{0.6} {\includegraphics{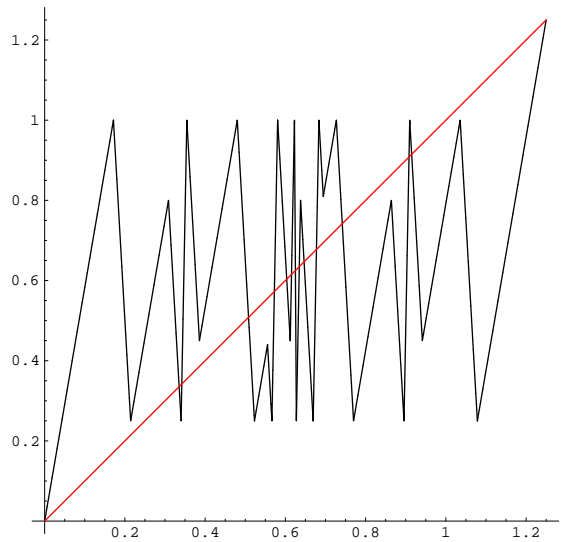}}
\caption{\dots and its third iterate -- observe all those
``parasite'' fixed points!}\label{fig:sg2}
\end{figure}

\end{rmk}

\begin{proof} Since $\be>\be_4$, we have the following 4-cycle in
$\Sigma_\be$:
\begin{align*}
x_1 &\sim (0011)^\infty,\,\,
x_2 \sim (0110)^\infty \\
x_3 &\sim (1100)^\infty,\,\, x_4 \sim (1001)^\infty
\end{align*}
Consequently, we have a 4-cycle for $F_\be$ which we denote by
$\{x_1,x_2,x_3,x_4\}$ as well. Notice that $x_1$ and $x_2$ lie in
the left hand side interval, while $x_3$ and $x_4$ belong to the
right hand side one.

Suppose such a map $S_\be$ exists; then we have
$S_\be^3(x_1)=x_4>x_1,\ S_\be^3(x_2)=x_1<x_2$. By the mean value
theorem, the map $S_\be^3$ has a fixed point $x_*$ between $x_1$ and
$x_2$. Since $F_\be(x_1)>x_1$ and $F_\be(x_2)>x_2$ and our
assumption on the monotonicity of $S_\be$ between $x_1$ and $x_2$,
we conclude that $S_\be(x_*)>x_*$, whence $x_*$ is a period~$3$
point for $S_\be$. By the classical Sharkovski\u{\i} theorem, this
implies that $S_\be$ has cycles of all possible lengths for
$\be>\be_4$.
\end{proof}

\section{Another proof of our main theorem using Sharkovski\u{\i}'s
classical theorem for trapezoidal maps}\label{sec:trapezoidal}

In the previous section we established the fact that there is no
natural extension of $F_\be$ to the whole interval which preserves
the delicate structure of the Sharkovski\u{\i} ordering on the
periodic orbits of $F_\be$. In this section we modify $F_\be$ by
flipping the right branch, which leads to the family of {\em
trapezoidal maps} (see, e.g., \cite{Louck}) and links our result to
the classical Sharkovski\u{\i} theorem.

More precisely, define the map $T_\be:I_\be\to I_\be$ as follows:
\[
T_\be(x)=\begin{cases}
\be x, &0\le x < \frac1{\be}\\
1, &\frac1{\be} \le x \le \frac1{\be(\be-1)}\\
\frac{\be}{\be-1}-\be x, & \frac1{\be(\be-1)}< x\le \frac1{\be-1}.
\end{cases}
\]

Following the standard notation, we denote the corresponding
intervals by $L,C$ and $R$ respectively -- see
Fig.~\ref{fig:branching2}. (Here
$C=\bigl[\frac1{\be},\frac1{\be(\be-1)}\bigr]$.)  We will write the
itineraries of points under $T_\be$ using this notation.

\begin{figure}[t]
    \centering \unitlength=0.3mm
   \centerline{
\begin{picture}(340,400)
\put(8,3){0}
\put(8,226){1}
\put(139,3){$\frac1{\be}$}
\put(185,3){$\frac1{\be(\be-1)}$}
\put(312,3){$\frac1{\be-1}$}
\put(76,28){$L$}
\put(165,28){$C$}
\put(250,28){$R$}
\put(20,20){\line(1,0){300}}
\put(20,320){\line(1,0){300}}
\put(20,320){\line(0,-1){300}}
\put(320,320){\line(0,-1){300}}
\dottedline(20,20)(320,320)
\dottedline(143,229)(143,20)
\dottedline(197,229)(197,20)
\dottedline(20,229)(143,229)
\thicklines
\put(20,20){\line(10,17){123}}
\put(320,20){\line(-10,17){123}}
\put(143,229){\line(1,0){54}}
\end{picture}}
\caption{The trapezoidal map $T_\be$ for $\beta=1.7$}
    \label{fig:branching2}
\end{figure}
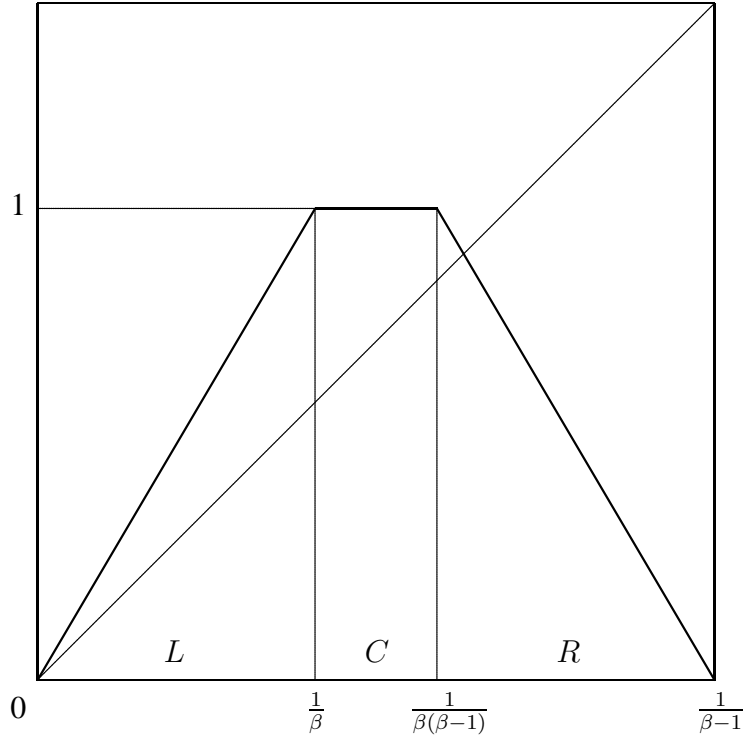

Our goal is to present another proof of the Sharkovski\u{\i} theorem
for the family $(\Si_\be,\si_\be)$ using the classical
Sharkovski\u{\i} theorem for $T_\be$.

Define the map $h:\Sigma\to\{L,R\}^{\mathbb N}$ as follows (from
here on $*$ denotes an arbitrary -- but fixed -- tail):
\begin{itemize}
\item $h(0*)=Lh(*)$;
\item $h(1^a0^b1*)=RL^{a-1}RL^{b-1}h(1*)$ for $a,b\ge1$;
\item $h(1^a0^\infty)=RL^{a-1}RL^\infty$;
\item $h(1^\infty)=RL^\infty$.
\end{itemize}

It is clear that $h$ is well defined and is one-to-one, with
$h^{-1}(RL^a RL^b)=1^{a+1}0^{b+1}$ for $a,b\ge0$ (on blocks). We
claim that $h$ in fact maps the orbits of $\si_\be$ into the orbits
of $T_\be$ which do not fall into $C$.

More precisely, put $\rho_\be:\{L,R\}\to I_\be$, where
$\rho_\be(\xi)=x$ such that $\xi$ is the itinerary of $x$ under $T_\be$.

\begin{lemma}
For $\e=1^\ell0*$ with $\ell\ge0$ we have
\[
F_\be^{\ell+1}(x)=T_\be^{\ell+1}(x),
\]
where $x=\rho_\be h(\e)$.
\end{lemma}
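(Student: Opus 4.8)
The plan is to exploit the only difference between $T_\be$ and $F_\be$: on the right branch $R$ the trapezoidal map reflects, whereas $F_\be$ does not. First I would introduce the involution $J:I_\be\to I_\be$, $J(y)=\frac1{\be-1}-y$, the reflection of $I_\be$ about its midpoint $\frac1{2(\be-1)}$. Since the middle interval $C=\bigl[\frac1\be,\frac1{\be(\be-1)}\bigr]$ is itself symmetric about that midpoint, $J$ fixes $C$ setwise and interchanges $L$ and $R$. Directly from the definitions one reads off the two identities $T_\be=F_\be$ on $L$ and $T_\be=J\circ F_\be$ on $R$: on $R$ one has $\frac{\be}{\be-1}-\be x=\frac1{\be-1}-(\be x-1)$. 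The crucial auxiliary fact I would record is the commutation $F_\be\circ J=J\circ F_\be$ on $L\cup R$, a one-line check on each branch (and transparent in expansion coordinates, where $J$ is the mirror image $\e\mapsto\ov\e$ and $F_\be$ is the shift $\si$, which commute).

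Next, writing $x=\rho_\be h(\e)$, $y_k=T_\be^k(x)$, $z_k=F_\be^k(x)$, and letting $s_k\in\{0,1\}$ be the parity of the number of symbols $R$ among the first $k$ entries of the itinerary $h(\e)$, I would prove by induction on $k$ that $y_k=J^{s_k}(z_k)$. Indeed $y_0=z_0=x$ and $s_0=0$; if $y_k\in L$ then $y_{k+1}=F_\be(y_k)=F_\be(J^{s_k}(z_k))=J^{s_k}(z_{k+1})$ with $s_{k+1}=s_k$, while if $y_k\in R$ then $y_{k+1}=J(F_\be(y_k))=J^{s_k+1}(z_{k+1})$ with $s_{k+1}=s_k+1$, both using the commutation. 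Along the way this shows that the $F_\be$-orbit $z_0,\dots,z_\ell$ avoids the gap $C$, so that $F_\be^{\ell+1}(x)$ is genuinely defined: the itinerary of $x$ carries no symbol $C$ in its first $\ell+1$ places, so $y_k\notin C$, and then $z_k=J^{s_k}(y_k)\notin C$ because $J(C)=C$.

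Finally I would read the prefix of $h(\e)$ off the defining rules. For $\e=1^\ell 0*$ with $\ell\ge1$, the rule $h(1^a0^b1*)=RL^{a-1}RL^{b-1}h(1*)$ (together with the two terminal rules) shows that $h(\e)$ begins with $RL^{\ell-1}R$; hence among its first $\ell+1$ symbols, which govern $y_0,\dots,y_\ell$, there are exactly two $R$'s, so $s_{\ell+1}=0$ and therefore $y_{\ell+1}=z_{\ell+1}$. For $\ell=0$ we have $\e=0*$, so $h(\e)$ begins with $L$, giving $s_1=0$ and $y_1=z_1$. In either case $T_\be^{\ell+1}(x)=F_\be^{\ell+1}(x)$, as required.

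I expect the genuine content to be the flip-parity bookkeeping of the induction $y_k=J^{s_k}(z_k)$ combined with the commutation $F_\be\circ J=J\circ F_\be$, which is precisely what makes the two reflections (occurring at steps $0$ and $\ell$ when $\ell\ge1$) cancel after exactly $\ell+1$ iterations. The remaining ingredients — that $J$ preserves $C$, so the $F_\be$-orbit is well defined, and that the prefix of $h(\e)$ is $RL^{\ell-1}R$ — are routine once the defining rules for $h$ are unwound.
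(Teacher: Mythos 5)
Your proof is correct, but it takes a genuinely different route from the paper's. The paper argues by direct computation: reading the prefix $RL^{\ell-1}R$ of $h(\e)$ off the definition of $h$, it composes the affine branches explicitly to get $T_\be^{\ell+1}(x)=\be^{\ell+1}x-\be^\ell-\cdots-\be^2-\be$, and then notes (leaving the verification to the reader) that the greedy expansion of $x$ begins with $1^\ell 0$, so that $F_\be^{\ell+1}(x)$ is the same affine expression; the case $\ell=0$ is immediate. You replace this computation by the symmetry $J(y)=\frac{1}{\be-1}-y$: the identities $T_\be=F_\be$ on $L$, $T_\be=J\circ F_\be$ on $R$, and $F_\be\circ J=J\circ F_\be$ on $L\cup R$ (all correct one-line checks), plus the flip-parity induction $T_\be^k(x)=J^{s_k}(F_\be^k(x))$ and the count of two $R$'s in the first $\ell+1$ itinerary symbols. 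This buys three things the paper's computation does not give directly: (i) you never need closed forms of the iterates; (ii) you obtain the orbit relation at every time $k$, so the Corollary that follows the lemma in the paper (for $\e=1^{a_1}0^{b_1}\dots 1^{a_s}0*$) drops out of the same parity count, each block $RL^{a_j-1}RL^{b_j-1}$ contributing an even number of $R$'s; (iii) you make explicit that the $F_\be$-orbit avoids the gap $C$ (via $J(C)=C$), i.e., why $F_\be^{\ell+1}(x)$ is defined at all and why the $F_\be$-itinerary of $x$ is $R^\ell L$ --- precisely the point the paper compresses into ``it is easy to verify that the greedy $\be$-expansion of $x$ begins with $1^\ell 0$''. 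The paper's proof is shorter; yours is more structural and scales to the general statement with no additional work.
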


\begin{proof}Assume first that $\ell\ge1$; we have $h(\e)=RL^{\ell-1}Rh(*)$,
whence by the definition of $T_\be$,
\begin{align*}
T_\be(x)&=\frac{\be}{\be-1}-\be x,\\
T_\be^\ell(x)&=\frac{\be^\ell}{\be-1}-\be^\ell x,\\
T_\be^{\ell+1}(x)&=\frac{\be}{\be-1}-\frac{\be^{\ell+1}}{\be-1} +\be^{\ell+1}x\\
&=\be^{\ell+1}x-\be^\ell-\dots-\be^2-\be.
\end{align*}
It is easy to verify that the greedy $\be$-expansion of $x$ begins
with $1^\ell 0$, whence by the definition of $F_\be$ (see
Section~\ref{sec:intro}), $F_\be^{\ell+1}(x) =
\be^{\ell+1}x-\be^\ell-\dots-\be^2-\be= T_\be^{\ell+1}(x)$.

If $\ell=0$, then $F_\be(x)=T_\be(x)=\be x$.
\end{proof}

\begin{cor} For $\e=1^{a_1}0^{b_1}\dots 1^{a_s}0*$ with $a_j\ge1, b_j\ge1$, we have
\[
F_\be^{\ell+1}(x)=T_\be^{\ell+1}(x),
\]
where $x=\rho_\be h(\e)$ and $\ell=\sum_{j=1}^{s-1}(a_j+b_j)+a_s$.
\end{cor}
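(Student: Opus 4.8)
The plan is to deduce the corollary from the preceding lemma by iteration. The finite prefix $1^{a_1}0^{b_1}\cdots 1^{a_s}0$ ends in a $0$, so it factors uniquely into \emph{chunks} of the form $1^\ell 0$ with $\ell\ge0$: reading from the left, the first chunk is $1^{a_1}0$, it is followed by the $b_1-1$ length-one chunks $0$ left over from the run $0^{b_1}$, then by $1^{a_2}0$, and so on, the final chunk being $1^{a_s}0$. Writing this factorisation as $1^{a_1}0^{b_1}\cdots 1^{a_s}0=1^{\ell_1}0\,1^{\ell_2}0\cdots 1^{\ell_r}0$, the number of chunks equals the number of $0$'s, namely $r=\sum_{j=1}^{s-1}b_j+1$, and $\sum_{i=1}^r(\ell_i+1)=\ell+1$. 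I would then prove, by induction on $r\ge1$, the following slightly more general claim: if $\e\in\Si$ has initial segment $1^{\ell_1}0\cdots 1^{\ell_r}0$ (with $\ell_i\ge0$) of length $m:=\sum_{i=1}^r(\ell_i+1)$, then $F_\be^{\,m}(x)=T_\be^{\,m}(x)$, where $x=\rho_\be h(\e)$.

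The base case $r=1$ is exactly the lemma, applied with $\ell=\ell_1$. For the inductive step I would strip off the leading chunk $1^{\ell_1}0$. The lemma applied to $\e=1^{\ell_1}0*$ gives $F_\be^{\ell_1+1}(x)=T_\be^{\ell_1+1}(x)$; write $x'$ for this common value. The key observation is that $x'$ is again an itinerary point: from the definition of $h$ one has $h(\e)=RL^{\ell_1-1}R\,h(\sigma^{\ell_1+1}\e)$ when $\ell_1\ge1$, and $h(\e)=L\,h(\sigma\e)$ when $\ell_1=0$, so the initial block $RL^{\ell_1-1}R$ of $h(\e)$ has length exactly $\ell_1+1$ and $\sigma^{\ell_1+1}h(\e)=h(\sigma^{\ell_1+1}\e)$. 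Since the $T_\be$-itinerary of $x$ is $h(\e)$, the itinerary of $T_\be^{\ell_1+1}(x)$ is $\sigma^{\ell_1+1}h(\e)=h(\sigma^{\ell_1+1}\e)$, whence $x'=\rho_\be h(\sigma^{\ell_1+1}\e)$. The sequence $\sigma^{\ell_1+1}\e$ now has initial segment $1^{\ell_2}0\cdots 1^{\ell_r}0$ ($r-1$ chunks), so the inductive hypothesis yields $F_\be^{\,m-\ell_1-1}(x')=T_\be^{\,m-\ell_1-1}(x')$. Composing, $F_\be^{\,m}(x)=F_\be^{\,m-\ell_1-1}(x')=T_\be^{\,m-\ell_1-1}(x')=T_\be^{\,m}(x)$, which completes the induction; taking $\e$ with the prefix from the statement and $m=\ell+1$ gives the corollary.

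The one genuine subtlety, and the step I would write out most carefully, is the assertion that the common image $x'=F_\be^{\ell_1+1}(x)=T_\be^{\ell_1+1}(x)$ is once more of the form $\rho_\be h(\cdot)$; this is precisely what allows the lemma to be reapplied and drives the induction. It rests on the identity $\sigma^{\ell_1+1}h(\e)=h(\sigma^{\ell_1+1}\e)$ noted above together with the fact that $\rho_\be$ inverts the itinerary map on orbits that never enter the central interval $C$ -- which is exactly the class of orbits on which $h$ takes its values. Everything else is routine bookkeeping: $F_\be$ and $T_\be$ agree on the left branch $L$ (both act as multiplication by $\be$), so the length-one chunks $0$ are handled trivially, while each maximal run of $1$'s terminated by a $0$ is absorbed by a single application of the lemma.
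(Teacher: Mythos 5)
Your proof is correct and is essentially the paper's (implicit) argument: the paper states the corollary with no separate proof, treating it as the obvious iteration of the preceding lemma, which is exactly what you carry out. Your chunk decomposition $1^{a_1}0^{b_1}\cdots 1^{a_s}0=1^{\ell_1}0\cdots 1^{\ell_r}0$, the shift-commutation $\sigma^{\ell_1+1}h(\e)=h(\sigma^{\ell_1+1}\e)$ at chunk boundaries (which is where $h$ does commute with the shift, by its blockwise definition), and the induction on the number of chunks are precisely the bookkeeping the authors leave to the reader.
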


This result allows us to link the periodic orbits of $F_\be$ to
those of $T_\be$. Notice that since $h$ acts blockwise and does not
alter the length of a block, a $p$-periodic orbit of $F_\be$ maps
into a $q$-periodic orbit of $F_\be$, where $q$ divides $p$. In
fact, we will show that there are only two possibilities: either
$q=p$ or $q=p/2$ -- see below.

\begin{example} If $\e=(1100)^\infty$, then $h(\e)=(RL)^\infty$. A more
complicated example: $\e=(1101011\,\, 0010100)^\infty$ and
$h(\e)=(RLRRRRL)^\infty$. Notice that in both cases $\e=(v\ov
v)^\infty$ for some $v$. We will see later that this is always the
case when $h$ cuts a period in half.
\end{example}

Now we are ready to present an alternative proof of the main theorem
of this paper. Since the case of powers of $2$ is considered in
\cite{GS}, we assume $m\rhd k$ in the Sharkovski\u{\i} ordering and
$m$ is not a power of $2$. Suppose $F_\be$ has an $m$-cycle; then
$T_\be$ has an $m$-cycle or an $m/2$-cycle. In either case, by the
classical Sharkovski\u{\i} theorem, $T_\be$ has a $k$-cycle. We need
to make sure however that such a cycle does not involve $C$. Let us
call a cycle with this property an {\em L-R cycle} and prove a
version of the Sharkovski\u{\i} theorem\footnote{The authors are
grateful to Sebastian van Strien for the idea of the proof.}.

\begin{prop}\label{prop:lr}
If $T_\be$ has an L-R $m$-cycle, then it has an L-R $k$-cycle for
any $k\lhd m$.
\end{prop}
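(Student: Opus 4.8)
The plan is to establish a version of Sharkovski\u{\i}'s theorem that stays within the class of L-R cycles, following the standard graph-theoretic (loop/covering) proof of the classical theorem but verifying at each step that the intervals and points produced avoid the central plateau $C$. The key observation is that $T_\be$ collapses all of $C$ to the single value $1$, so any periodic orbit meeting $C$ is eventually mapped to $1$ and cannot return periodically unless $1$ itself is periodic; thus an L-R cycle is precisely a genuine periodic orbit lying entirely in $L\cup R$, on which $T_\be$ coincides with one of its two affine (expanding) branches at each step. On $L\cup R$ the map is piecewise monotone with slope $\pm\be$, and the images $T_\be(L)=T_\be(R)=[0,1]$ cover the whole range, so the usual interval-covering combinatorics apply.

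First I would set up the combinatorial skeleton exactly as in the Straszewicz/Block-Guckenheimer-Misiurewicz-Young proof of Sharkovski\u{\i}'s theorem: given an L-R $m$-cycle $P=\{p_1<\dots<p_m\}$ (with $m$ not a power of $2$, odd part $>1$), partition $[p_1,p_m]$ into the $m-1$ consecutive closed intervals $I_j=[p_j,p_{j+1}]$ and form the directed \emph{Markov graph} whose vertices are the $I_j$ and whose edges $I_j\to I_{j'}$ record the covering relation $T_\be(I_j)\supseteq I_{j'}$. The classical argument extracts from this graph, for each $k\lhd m$, a loop of length $k$ that is not a repetition of a shorter loop, and then produces a point of least period $k$ by the standard fixed-point/itinerary argument (using that $T_\be$ maps each $I_j$ continuously and monotonically onto an interval, so a loop $I_{j_0}\to\dots\to I_{j_{k-1}}\to I_{j_0}$ yields a point $x$ with $T_\be^i(x)\in I_{j_i}$ and $T_\be^k(x)=x$).

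The new ingredient, and the step I expect to be the main obstacle, is to guarantee that the periodic point so produced is itself an \emph{L-R} point, i.e.\ that its entire orbit avoids the interior of $C$. The plan is to exploit two features of the trapezoidal map. First, since the cycle $P$ is an L-R cycle, none of its points lies in $C$; because $C$ is a single interval and the partition points $p_j$ are the only endpoints, at most one of the intervals $I_j$ can meet $C$, and that $I_j$ has $T_\be(I_j)=\{1\}\cup(\text{images of its $L,R$ parts})$. I would argue that any covering loop can be chosen to pass only through intervals contained in $L$ or in $R$: the interval straddling $C$ is a source of spurious ``parasite'' behaviour (cf.\ the parasite fixed points already noted for the extension $S_\be$), but since on $C$ the map is constant equal to $1=\rho_\be(RL^\infty)$, any orbit entering the interior of $C$ lands on the fixed itinerary tail and cannot be part of a least-period-$k$ cycle for $k\ge2$ other than through $1$ itself. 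Thus the loops realizing periods $k\ge2$ may be taken to avoid the central interval, and the resulting periodic point has an itinerary in $\{L,R\}^{\mathbb N}$, i.e.\ is an L-R point.

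Finally I would handle periods $k=1$ and the small even cases directly (the period-$2$ orbit $(1100)^\infty\mapsto(RL)^\infty$ of the Example is manifestly an L-R cycle and persists), and confirm that the least period of the extracted point is exactly $k$ by the usual ``non-repetitive loop'' bookkeeping, now carried out inside the sub-graph on the L- and R-intervals. The upshot is the desired relative Sharkovski\u{\i} statement: an L-R $m$-cycle forces an L-R $k$-cycle for every $k\lhd m$. Combined with the Corollary above — which transports periodic orbits of $F_\be$ to periodic orbits of $T_\be$ of the same period or half the period, all lying off $C$ — this yields the Sharkovski\u{\i} ordering for $(\Si_\be,\si_\be)$ and hence the second proof of Theorem~\ref{thm:main}.
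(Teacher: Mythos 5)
Your plan correctly isolates the crux --- making sure the periodic point produced by the covering argument has its whole orbit off $C$ --- but then asserts it rather than proving it, and the justification you offer is faulty. It is simply not true that an orbit entering the interior of $C$ ``cannot be part of a least-period-$k$ cycle'': the plateau value is $1$, which lies in $R$ for the relevant $\be>G$, and the forward orbit of $1$ can re-enter $C$; indeed the paper's final remarks exhibit exactly such an orbit, the genuine $2$-cycle $(RC)^\infty$ of $T_\be$ at $\be=\be_2$. So your escape clause ``other than through $1$ itself'' is precisely the case that must be excluded, and you give no mechanism for excluding it. Nor can you just delete the offending vertex from the Markov graph: the partition interval straddling $C$ is the one across which the cycle switches from the increasing to the decreasing branch, and nothing guarantees that the classical loop constructions (\v{S}tefan sequences and their relatives) can bypass it for every $k\lhd m$. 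Worse, even when a loop does traverse that interval, the usual nested-subinterval argument gives no control over \emph{where} in it the periodic point's visit lands: $T_\be$ is constant on $C$, so points of $C$ are indistinguishable, as far as covering relations go, from nearby points of $L$ and $R$ with the same images, and the ``least period $k$'' bookkeeping does nothing to push the orbit off the plateau. This is a genuine gap, not a routine verification.

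The paper closes it with a different, and much shorter, idea (credited in a footnote to van Strien): truncation. Let $x_1$ be the cycle point nearest $C$ and saw off the top of the graph of $T_\be$ at height $\be x_1$ (resp.\ $\frac{\be}{\be-1}-\be x_1$), obtaining a continuous trapezoidal map $\widetilde T_\be$ whose enlarged plateau contains $C$ and whose plateau value is the cycle point $T_\be(x_1)$. The original $m$-cycle survives as an $m$-cycle of $\widetilde T_\be$, so the classical Sharkovski\u{\i} theorem gives a $k$-cycle $\{y_1,\dots,y_k\}$; and no $y_j$ can lie on the plateau, for then $\widetilde T_\be(y_j)=\widetilde T_\be(x_1)$ would throw its orbit onto the $m$-cycle, which never returns to the plateau, contradicting periodicity of $y_j$. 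Hence the $k$-cycle lives where $\widetilde T_\be=T_\be$ and off $C$, i.e.\ it is an L-R cycle --- with no analysis of loops, no case split on the combinatorics of the cycle, and covering all $m$ at once (note the proposition is stated for every $m$, whereas your sketch assumes the odd part of $m$ exceeds $1$ and defers the powers of $2$). To rescue your graph-theoretic route you would have to replace your unsupported avoidance claim with an argument of exactly this truncation type.
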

\begin{proof}
Let $\{x_1,\dots,x_m\}$ be the cycle in question. Without loss of
generality assume that $x_1$ is the point of this cycle closest to
$C$. If $x_1<1/\be$, we put
\[
\widetilde T_{\beta}(x)=\begin{cases}
\be x, & 0\le x< x_1,\\
\be x_1, & x_1\le x\le \frac1{\be-1}-x_1,\\
\frac{\be}{\be-1}-\be x,& \frac1{\be-1}-x_1 < x \le \frac1{\be-1}.
\end{cases}
\]
If $x_1>\frac1{\be(\be-1)}$, put
\[
\widetilde T_{\beta}(x)=\begin{cases}
\be x, & 0\le x<\frac1{\be-1}- x_1,\\
\frac{\be}{\be-1}-\be x_1, &\frac1{\be-1}-x_1\le x\le x_1,\\
\frac{\be}{\be-1}-\be x,& x_1 < x \le \frac1{\be-1}.
\end{cases}
\]
In other words, $\widetilde T_{\beta}$ is a trapezoidal map whose
graph is made out of the graph of $T_\be$ by ``sawing off'' its top
at the level $y=\be x_1$ or $y=\frac{\be}{\be-1}-\be x_1$
respectively. Notice that $\{x_1,\dots,x_m\}$ is still an $m$-cycle
for $\widetilde T_{\beta}$, whence, by the classical
Sharkovski\u{\i} theorem, $\widetilde T_{\beta}$ has a $k$-cycle
$\{y_1,\dots,y_k\}$. It suffices to observe that $y_j\notin C$ for
all $j$, because otherwise $\widetilde T_{\beta}(y_j)=\widetilde
T_{\beta}(x_1)$, and consequently, $\widetilde
T_{\beta}^r(y_j)\notin C$ for any $r\ge1$.

Hence, by our construction, $\{y_1,\dots,y_k\}$ is a sought L-R
cycle for $T_\be$.
\end{proof}

Thus, $T_{\beta}$ has an L-R $k$-cycle, whence by applying $h^{-1}$,
we conclude that $F_{\beta}$ (or $\sigma_{\beta}$) has a $k$-cycle
or an $\ell k$-cycle for some $\ell\ge2$. It suffices to discard the
latter case.

Let $\e = u^\infty$, with
\[
u=1^{a_1}0^{b_1}\dots 1^{a_{s-1}}0^{b_{s-1}}1^{a_s}\,\mid\,
0^{b_s}1^{a_{s+1}}0^{b_{s+1}}\dots 1^{a_r}0^{b_r},
\]
where $\mid$ separates the two halves of $u$. Then
\begin{equation}
\label{eq:hu}
h(u)=RL^{a_1-1}RL^{b_1-1}\dots RL^{a_s-1}\, \mid\,
RL^{b_s-1}RL^{a_{s+1}-1}RL^{b_{s+1}-1}\dots RL^{a_r-1}RL^{b_r-1}.
\end{equation}
From (\ref{eq:hu}) it is clear that if a word $u$ is not a power of
another word itself and $h(u)$ is a power, then it must be a square,
i.e., $\ell=2$.

Suppose $h(u)=ww$ for some $w$. Then
\[
RL^{a_1-1}RL^{b_1-1}\dots RL^{a_s-1}
= RL^{b_s-1}RL^{a_{s+1}-1}RL^{b_{s+1}-1}\dots RL^{a_r-1}RL^{b_r-1},
\]
i.e., $a_1=b_s,b_1=a_{s+1},\dots, a_s=b_r$. In other words,
$u=v\ov v$ for $v=1^{a_1}0^{b_1}\dots 1^{a_{s-1}}0^{b_{s-1}}1^{a_s}$.

Thus, if a sequence $\e$ is of smallest period~$2k$ and $h(\e)$ has
smallest period~$k$, then $\e=(v\ov v)^\infty$ for some $v$. Our
goal is to show that for such an $\e$ one can find
$\e'\in\Sigma_\be$ of smallest period~$k$, which will conclude the
proof.

Analogously to the proof of Proposition~\ref{main}, we consider all
the shifts of $\e$ and $\ov\e$ (which are all in $\Sigma_\be$) and
choose the maximal one. Hence without loss of generality, we may
again assume $\e\in\Gamma$, where $\Gamma$ is given by
(\ref{eq:gamma}).

\begin{prop}
Assume $\e\in\Gamma$ is a sequence of smallest period $2k$, of the
form $\e=(v\ov v)^\infty$ for some $v$, where $|v|=k>1$. Then there
exists a sequence $\e'\in\Gamma$ such that $\varepsilon'$ has
smallest period $k$ and $\e'\prec\e$.
\end{prop}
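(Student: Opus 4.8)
The plan is to prove, by induction on $k$, the single clean statement that \emph{$a_k\prec\e$} for every anti-palindromic $\e\in\Gamma$ of half-period $k$, and then take $\e'=a_k$ (which has smallest period $k$ by Proposition~\ref{smallest}). The induction proceeds by a dichotomy according to whether $\e\preceq 1(10)^\infty$ or $\e\succ 1(10)^\infty$. Throughout I use that $\e\in\Gamma$ means exactly that $\e$ is the largest of its shifts and $\bar\e$ the smallest, and I write $v=v_1\cdots v_k$ so that $\e=(v\bar v)^\infty$ and $\bar\e=\sigma^k\e$.

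First I record two facts forced by the maximality of $\e$. Since $\e\in\Gamma$ has smallest period $2k\ge4$ it cannot begin with $10$, so $v_1=v_2=1$; and if $v_k=0$ then maximality forces $v_2=\cdots=v_{k-1}=1$, i.e.\ $v=1^{k-1}0$, but then $\sigma^{2k-1}\e$ (which begins $\bar v_k\,v_1\cdots$) starts with $1^{k}$ and exceeds $\e$ -- a contradiction. Hence $v_k=1$, and I set
\[
\e^{\flat}:=(v_1\cdots v_{k-1}0)^{\infty},
\]
the period-$k$ word obtained by flipping the last letter of $v$. As $\e^\flat$ and $\e$ agree on positions $1,\dots,k-1$ and differ at position $k$, we get $\e^\flat\prec\e$ for free. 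Moreover $\e^\flat$ has smallest period exactly $k$: if its period were a proper divisor $d\mid k$, then $v$ would be a power of a length-$d$ block with only its last letter altered, and $\sigma^{k-d}\e$ would begin with a larger letter than $\e$ at position $d$, again contradicting maximality.

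If $\e\preceq 1(10)^\infty$ (necessarily $\e\ne(10)^\infty$), Lemma~\ref{other-gamma}(c) gives $\e=\mu(\e_0)$ with $\e_0\in\Gamma$; by Lemma~\ref{elementary-gamma}(c) the sequence $\e_0$ has smallest period $k$, and a short parity computation on the identities $\e_{2n}=\e_{0,n}$, $\e_{2n+1}=\overline{\e_{0,n}}$ shows that the relation $\e_{i+k}=\bar\e_i$ forces $k$ to be even and $\e_0$ to be anti-palindromic of half-period $k/2$. The inductive hypothesis applied to $\e_0$ (half-period $k/2<k$) yields $a_{k/2}\prec\e_0$, and applying the order-preserving map $\mu$ (Lemma~\ref{elementary-gamma}(d)) together with $a_k=\mu(a_{k/2})$ (from Proposition~\ref{smallest}, valid since $k/2\ge2$) gives $a_k\prec\e$. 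The base case $k=2$ is immediate, as then $\e=(1100)^\infty$ and $a_2=(10)^\infty\prec\e$.

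In the complementary case $\e\succ 1(10)^\infty$ I would show that $\e^\flat$ itself lies in $\Gamma$, whence $a_k\preceq\e^\flat\prec\e$. The upper inequalities $\sigma^j\e^\flat\preceq\e^\flat$ transfer directly from $\sigma^j\e\preceq\e$: the first disagreement between $\sigma^j\e^\flat$ and $\e^\flat$ sits in the common initial block $v_{j+1}\cdots v_{k-1}$ exactly where that between $\sigma^j\e$ and $\e$ does, and the only potentially bad configuration (that suffix being a border of $v_1\cdots v_{k-1}$) is ruled out because it would make $\sigma^j\e\succ\e$. The main obstacle -- the step I expect to be genuinely delicate -- is the matching family of lower inequalities $\bar{\e^{\flat}}\preceq\sigma^j\e^\flat$. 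Here the naive transfer from $\bar\e\preceq\sigma^j\e$ fails in one degenerate ``border--complement'' configuration, and closing it requires using the maximality of $\e$ a second time (exhibiting a further shift of $\e$ that would otherwise exceed $\e$) together with the standing hypothesis $\e\succ 1(10)^\infty$, which is precisely what excludes that configuration; this is where the bulk of the work lies.
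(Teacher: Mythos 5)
Your reduction of the Proposition to the single claim $a_k\prec\e$ is legitimate, and your Case A (the $\mu$-descent when $\e\preceq 1(10)^\infty$, including the parity argument forcing $k$ even and the base case $k=2$) is correct and complete; it is also a genuinely different idea from the paper, which never inducts. But Case B is not a proof. You say you ``would show'' $\e^\flat\in\Gamma$ and concede that the lower inequalities $\ov{\e^\flat}\preceq\si^j\e^\flat$ are ``genuinely delicate'' and ``where the bulk of the work lies''. That conceded step is exactly the mathematical core of the paper's proof: writing $w=xy$, one must establish the left-hand inequality of (\ref{eq:ineq1}), namely $\ov{x}\,\ov{y}\,1\preceq y\,0\,x$, and the paper does this via a second decomposition $xy=YX$, a case split on whether $\ov{y}\prec Y$ or $\ov{y}=Y$, and two separate applications of the Lyndon--Sch\"utzenberger theorem to kill commuting-word degeneracies. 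Your sketch contains none of these ingredients, and your hope that the standing hypothesis $\e\succ 1(10)^\infty$ is ``precisely what excludes'' the bad configuration is unsupported: the paper proves $(w0)^\infty\in\Gamma$ for \emph{every} anti-palindromic $\e\in\Gamma$, making no reference to $1(10)^\infty$, so that hypothesis is not what the obstruction hinges on. (Your transfer of the upper inequalities $\si^j\e^\flat\preceq\e^\flat$ is essentially right, but that is the easy half.) Note also that Case B contains in particular every odd $k$ -- e.g.\ $k=3$, the case that drives the whole Sharkovski\u{\i} phenomenon -- so the gap is not peripheral.

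Ironically, your own framework admits a completion of Case B that is far cheaper than proving $\e^\flat\in\Gamma$, so the $\e^\flat$ machinery can be discarded entirely. If $k$ is even, then $a_k\preceq 1(10)^\infty$: indeed $a_2=(10)^\infty\prec 1(10)^\infty$, and for even $k\ge4$ Proposition~\ref{smallest} gives $a_k=\mu(a_{k/2})$, whence by monotonicity of $\mu$ (Lemma~\ref{elementary-gamma}(d)) $a_k\preceq\mu(1^\infty)=1(10)^\infty$; since in Case B one has $1(10)^\infty\prec\e$, this yields $a_k\prec\e$ with no further work. If $k=2m+1$ is odd, anti-palindromicity gives $\e_{k+1}=\ov{\e_1}=0$, whereas $1(10)^\infty$ carries a $1$ in the even position $k+1$; since $\e\succ 1(10)^\infty$, the first disagreement between $\e$ and $1(10)^\infty$ must therefore occur at an odd position $2q+3\le k$ where $\e$ carries the $1$, i.e.\ $\e$ begins with $1(10)^q11$ for some $q\le m-1$. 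But $a_k=(1(10)^m)^\infty$ begins with $1(10)^q10$, so $a_k\prec\e$ by direct comparison. Had you closed Case B this way, your induction would be complete and notably shorter than the paper's argument; as submitted, however, the proposal has a genuine gap at its hardest point.
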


\begin{proof}
This result can be deduced from the combination of two results of
\cite{A-etat} (namely Proposition~2 on p.~34 applied to the sequence
$\varepsilon$ and Proposition~1 on p. 32), but we give a direct
proof. Since the sequence $\varepsilon$ begins with $1$, and since
$\sigma^{2k - 1}\varepsilon \prec \varepsilon$, the word
$\overline{v}$ must end in $0$, hence the word $v$ must end in $1$.
Let $v := w1$, hence $v\overline{v} = w 1 \overline{w} 0$.

The sequence $\varepsilon' := (w0)^{\infty}$ has period $k$. This is
the smallest period of the sequence $\varepsilon'$: if this were not
the case, we would have $w0 = (z0)^{\ell}$ for some word $z$ and
some integer $\ell \geq 2$. Thus $w = (z0)^{\ell - 1} z$. This would
imply $\varepsilon = (w 1 \overline{w} 0)^{\infty} = ((z0)^{\ell -
1} z 1(\overline{z} 1)^{\ell - 1} \overline{z} 0)^{\infty}$. But
then $\sigma^{2\ell-2} \varepsilon$ begins with $z1$ and the
condition $\sigma^{2\ell-2} \varepsilon \preceq \varepsilon$ would
not be satisfied.

We clearly have $\varepsilon' \prec \varepsilon$. To prove that
$\varepsilon'$ belongs to $\Gamma$, it clearly suffices to prove
that if the word $w$ is written as $w := xy$ (thus $(w0)^{\infty} =
(xy0)^{\infty})$, with the condition $0 < |x| < |w| = k-1$, then
\begin{equation}\label{eq:ineq1}
\overline{x} \ \overline{y} \ 1 \prec y \ 0 \ x \prec x \ y \ 0,
\end{equation}
thus yielding $\overline{\varepsilon'} \prec \sigma^{|x|}
\varepsilon' \prec \varepsilon'$ or $\overline{x} \ \overline{y} \ 1
= y \ 0 \ x \prec x \ y \ 0$, thus yielding $\overline{\varepsilon'}
= \sigma^{|x|} \varepsilon' \prec \varepsilon'$.

\medskip

Let us prove (\ref{eq:ineq1}). Since $\varepsilon = (w \ 1 \
\overline{w} \ 0)^{\infty} = (x \ y \ 1 \ \overline{x} \
\overline{y} \ 0)^{\infty}$ belongs to $\Gamma$, we have
$\sigma^{|x|}(\varepsilon) \preceq \varepsilon$, hence $y \ 1 \
\overline{x} \preceq x \ y \ 1$. Notice that $y \ 1 \ \overline{x}$
cannot be equal to $x \ y \ 1$. Namely, if these two words were
equal, this would imply $y \ 1 \ \overline{x} \ \overline{y} \ 0 \ x
= x \ y \ 1 \ \overline{x} \ \overline{y} \ 0$. This shows that the
words $x$ and $y \ 1 \ \overline{x} \ \overline{y} \ 0$ would
commute: from a theorem of Lyndon and Sch\"utzenberger \cite{LS}
this would imply that there exist a word $z$ and two positive
integers $a$ and $b$ such that $x = z^a$ and $y \ 1 \ \overline{x} \
\overline{y} \ 0 = z^b$. Hence $w \ 1 \ \overline{w} \ 0 = z^{a+b}$
and $2k$ would not be the smallest period of the sequence
$\varepsilon$. We thus have $y \ 1 \ \overline{x} \prec x \ y \ 1$,
whence
\begin{equation}\label{eq:ineq2}
y \ 1 \ \overline{x} \preceq x \ y \ 0.
\end{equation}
Obviously, $y \ 0 \ x \prec y \ 1 \ \overline{x}$, whence $y \ 0 \ x
\prec x \ y \ 0$, which proves the RHS inequality in
(\ref{eq:ineq1}).

\medskip

Let us prove that $\overline{x} \ \overline{y} \ 1 \prec y \ 0 \ x$,
or, equivalently, that
\begin{equation}\label{eq:ineq3}
\overline{y} \ 1 \ \overline{x} \prec x \ y \ 0.
\end{equation}
We can write $xy=YX$, with $|X| = |x|$ and $|Y| = |y|$. Thus,
$\varepsilon = (Y \ X \ 1 \ \overline{x} \ \overline{y} \
0)^{\infty}$. Since $\sigma^{2|x|+1+|y|} \varepsilon \preceq
\varepsilon$, we have $\overline{y} \preceq Y$. Now, if
$\overline{y} \prec Y$, then $\overline{y} \ 1 \ \overline{x} \prec
Y \ X \ 0 = x \ y \ 0$, which is the sought
inequality~(\ref{eq:ineq3}), and we are done.

If $\overline{y} = Y$, then $\varepsilon = (Y \ X \ 1 \ \overline{x}
\ Y \ 0)^{\infty}$. We claim that $X \ 1 \ \overline{x}$ cannot
begin with $0$, because if this were the case, say $X \ 1 \
\overline{x} := 0 \ t$, then $\varepsilon = (Y \ 0 \ t \ Y \
0)^{\infty}$. The inequality $\sigma^{|y|+|t|+1} \varepsilon \preceq
\varepsilon$ would imply that $Y \ 0 \ Y \ 0 \ t \preceq Y \ 0 \ t \
Y \ 0$, hence $Y \ 0 \ t \preceq t \ Y \ 0$. On the other hand
$\sigma^{|y|+1} \varepsilon \preceq \varepsilon$ implies that $t \ Y
\ 0 \preceq Y \ 0 \ t$. We would thus have $t \ Y \ 0 = Y \ 0 \ t$.
In other words, $Y0$ and $t$ commute, whence, as above, there exist
a word $z$ and two positive integers $a$ and $b$, with $Y0 = z^a$
and $t = z^b$. Consequently,  $\varepsilon = (z^{2a+b})^{\infty}$,
which contradicts the minimality of the period of the sequence
$\varepsilon$.

Therefore, $X \ 1 \ \overline{x}$ must begin with $1$, say, $X \ 1 \
\overline{x} := 1 \ t$. We have $\varepsilon = (Y \ 1 \ t \ Y \
0)^{\infty}$. Now $\overline{\varepsilon} \preceq \sigma^{|y|+1}
\varepsilon$ implies that $\overline{Y} \ 0 \ \overline{t} \preceq t
\ Y \ 0 \prec t \ Y \ 1$. Hence, in view of $\ov y=Y$ and $\ov X\ 0\
x = 0\ \ov t$, we have
\begin{equation}\label{eq:ineq4}
\begin{aligned}
\overline{y} \ 1 \ \overline{x} \ \overline{y} \ 0 \ x &=
Y \ 1 \ \overline{Y} \ \overline{X} \ 0 \ x =
Y \ 1 \ \overline{Y} \ 0 \ \overline{t}\\
& \prec Y \ 1 \ t \ Y \ 1 =
Y \ X \ 1 \ \overline{x} \ Y \ 1 \\
&= x \ y \ 1 \ \overline{x} \ Y \ 1.
\end{aligned}
\end{equation}
This, in turn, implies $\overline{y} \ 1 \ \overline{x} \preceq x \
y \ 1$. Notice that if we had $\overline{y} \ 1 \ \overline{x} = x \
y \ 1$, then (\ref{eq:ineq4}) would imply $\overline{y} \ 0 \ x
\prec \overline{x} \ Y \ 1 = \overline{x} \ \overline{y} \ 1$, i.e.,
by barring everything, $x \ y \ 0 \prec y \ 1 \ \overline{x}$ --
which clearly contradicts (\ref{eq:ineq2}).

Hence $\overline{y} \ 1 \ \overline{x} \prec x \ y \ 1$, and thus,
by (\ref{eq:ineq4}), $\overline{y} \ 1 \ \overline{x} \preceq x \ y
\ 0$. Either $\overline{y} \ 1 \ \overline{x} \prec x \ y \ 0$,
which is precisely the required LHS inequality in (\ref{eq:ineq1}),
and we are done, or $\overline{y} \ 1 \ \overline{x} = x \ y \ 0$,
i.e., $y \ 0 \ x = \overline{x} \ \overline{y} \ 1$. This implies
$\sigma^{|x|} \varepsilon' = (y \ 0 \ x)^{\infty} = (\overline{x} \
\overline{y} \ 1)^{\infty} = \overline{\varepsilon'}$, which,
together with the proven RHS of the inequality (\ref{eq:ineq1})
yields $\ov{\e'}\in\Gamma$.
\end{proof}

Thus, we have constructed a periodic sequence $\e'\in\Gamma$ with
smallest period $k$, which implies
$\sigma^j\e'\preceq\e'\prec\e\prec d(\be)$, and similarly,
$\sigma^j\ov{\e'}\prec d(\be)$. Hence $\e'\in\Sigma_\be$, and this
concludes the second proof of Theorem~\ref{thm:main} in the case
when $m$ is not a power of 2.

\section{Final remarks and an open problem}

\begin{rmk}
The case $m=2^n$ is a bit more delicate: here $h$ can map a periodic
sequence with smallest period $m$ into a periodic sequence with
smallest period $m/2$, for instance, $h((1100)^\infty)=(RL)^\infty$.
A direct inspection shows that a $2$-cycle for $T_\be$ does appear
at $\be=\be_2$, but it is $(RC)^\infty$, not $(RL)^\infty$. The
latter cycle in fact appears at $\be=\be_4$, where the former one
disappears.

\medskip
We conjecture that the first $2^n$-cycle to arise for $T_\be$
appears at $\be=\be_{2^n}$ but always involves $C$. The proof is
left to the interested reader.
\end{rmk}

\begin{rmk}
H.~Bruin is his Ph.~D.~Thesis \cite{bruin} proved various results on
the Sharkovski\u{\i} theorem for unimodal maps.
\end{rmk}

\begin{rmk}
Let, as above, $\tau_\be:[0,1)\to[0,1)$ denote the (greedy)
$\be$-transformation, i.e., $\tau_\be x=\be x\bmod 1$. Our remark
consists in a simple observation that there is no Sharkovski\u{\i}
theorem for the family $([0,1),\tau_\be)_{\be\in(1,2)}$. Indeed, the
set of admissible sequences here is
\begin{equation}\label{eq:greedy}
\widetilde\Sigma_\be=\{\e\in\Sigma: \sigma^j\e\prec
d(\be),\quad\forall j\ge0\}
\end{equation}
(see \cite{Pa}), and it is obvious that the smallest periodic
sequence $\e$ with smallest period $n$ such that $\e_1=1$ and
$\sigma^j\e\preceq\e$ for all $j\ge0$, is $(10^{n-1})^\infty$. Hence
the analogue of $U_n$ is $\widetilde U_n=(q_n,2)$, where $q_n$ is
the appropriate root of $x^n=x^{n-1}+1$, i.e.,
$d'(q_n)=(10^{n-1})^\infty$.

Hence $\widetilde U_n\subset\widetilde U_k$ iff $n<k$, which is not
a particularly interesting result. Comparing (\ref{eq:uniq}) and
(\ref{eq:greedy}), we see that the extra condition
$\sigma^j\ov\e\prec d(\be)$ makes all the difference.
\end{rmk}

\begin{rmk}
Let $\prec_u$ denote the {\em unimodal order} on the itineraries of
$T_\be$, i.e., $L\prec_u C\prec_u R$ and $\e\prec_u\e'$ if
$\e_i\equiv\e'_i,\ 1\le i\le k$ and either
$\e_{k+1}\prec_u\e_{k+1}'$ with $\#\{i\in[1,k] : \e_i=R\}$ even or
$\e_{k+1}\succ_u\e_{k+1}'$ with $\#\{i\in[1,k] : \e_i=R\}$ odd (see,
e.g., \cite{MT}).

\begin{prop}We have for $\e,\e'\in\Si$,
\[
\e\prec \e' \iff h(\e)\prec_u h(\e').
\]
\end{prop}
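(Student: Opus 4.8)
The plan is to prove the equivalence $\e \prec \e' \iff h(\e) \prec_u h(\e')$ by reducing both orders to a comparison at the first position of disagreement. Since both $\prec$ and $\prec_u$ are total orders (on $\Sigma$ and on $\{L,R\}^{\mathbb N}$ respectively, restricted to the image of $h$), it suffices to prove just one implication, say $\e \prec \e' \Rightarrow h(\e) \prec_u h(\e')$; the reverse direction then follows because $h$ is a bijection onto its image and both are total orders. So I will focus on the forward implication.

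First I would set up the comparison. Suppose $\e \prec \e'$, so they agree on a common prefix $\e_1 \cdots \e_{k}$ and differ at position $k+1$, with $\e_{k+1} = 0 < 1 = \e'_{k+1}$. The key structural fact to exploit is that $h$ acts in a controlled, block-by-block manner governed by the runs of $1$'s and $0$'s: each maximal block $1^a$ contributes $RL^{a-1}$ (with the leading $R$ signalling the switch) and each run of $0$'s similarly produces a string in $L$ preceded by an $R$ coming from the following $1$. The crucial observation is that a digit $0$ in $\e$ always maps to an $L$ in the corresponding output position, while a $1$ maps to an $R$. So at the first place where $\e$ and $\e'$ diverge, the images $h(\e)$ and $h(\e')$ will first agree (since $h$ is ``local'' up to the point of divergence) and then diverge with an $L$ against an $R$ in $h(\e)$ versus $h(\e')$ respectively.

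The heart of the argument — and the main obstacle — is to reconcile the \emph{sign convention} of the unimodal order with the plain lexicographic order on $\Sigma$. In the unimodal order, whether $L \prec_u R$ or $L \succ_u R$ at the point of divergence depends on the parity of the number of $R$'s seen in the common prefix of the $h$-images. I must show that this parity is exactly arranged so that ``$0$ beats $1$ lexicographically'' translates into ``$L$ beats $R$ in the unimodal sense'' at that position. Concretely, I would track, for each prefix $\e_1\cdots\e_k$, the number of $R$'s appearing in $h(\e_1 \cdots \e_k \cdots)$ up to the divergence position, and show this count has the correct parity. This is where I expect the bookkeeping to be delicate: the map $h$ inserts an $R$ precisely at each $0 \to 1$ and $1 \to 0$ transition of $\e$, so the parity of $R$'s is tied to the number of sign changes in the prefix, which in turn governs the orientation-reversal in the unimodal order. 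The natural strategy is to argue by induction on the block structure, establishing a lemma of the form: the parity of $\#\{R\text{'s in }h(\e_1 \cdots \e_k)\}$ equals the parity of $\e_1 + \cdots + \e_k$ (or some such invariant), and then verify that the unimodal comparison at the divergence point always comes out matching the lexicographic one.

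Finally, I would handle the boundary and tail cases separately, since $h$ has special clauses for tails $1^a 0^\infty$ and $1^\infty$. These do not affect the generic divergence argument but must be checked to ensure $h$ is genuinely order-preserving even when one of the sequences ends in such a tail. Once the forward implication and these edge cases are established, the equivalence follows from totality of both orders and injectivity of $h$, completing the proof.
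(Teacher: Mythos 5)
Your high-level plan (reduce to the first position of disagreement, use totality of both orders to get the equivalence from one implication, and control the parity of $R$'s in the common prefix of the images) is the same as the paper's, but the proposal has a genuine gap: the two structural facts on which it rests are stated incorrectly, they contradict each other, and the decisive verification is deferred rather than carried out. Your ``crucial observation'' that a digit $0$ of $\e$ always maps to $L$ and a digit $1$ always maps to $R$ is false. From $h(1^a0^b1*)=RL^{a-1}RL^{b-1}h(1*)$ one reads off that $h$ places an $R$ at position $i$ exactly when $\e_i\neq\e_{i-1}$ (with the convention $\e_0:=0$): the first $0$ of every $0$-run maps to $R$, and every $1$ after the first one in a $1$-run maps to $L$. (Later you state this transition rule correctly, but never reconcile it with the earlier claim.) Hence your assertion that the images always diverge as ``$L$ in $h(\e)$ against $R$ in $h(\e')$'' fails whenever the common prefix ends in $1$: there $h(\e)$ receives an $R$ and $h(\e')$ an $L$. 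Your candidate invariant is wrong as well: the number of $R$'s in the image of a prefix $\e_1\cdots\e_k$ is the number of transitions, whose parity equals the value of $\e_k$, not the parity of $\e_1+\cdots+\e_k$ (for the prefix $11$ the image is $RL$, one $R$, while the digit sum is $2$). These errors do not cancel: for $\e=10\cdots\prec\e'=11\cdots$ your two rules yield divergence $L$ versus $R$ after a prefix of odd digit sum, i.e., $h(\e)\succ_u h(\e')$, which is the wrong conclusion (in fact $h(\e)=RR\cdots\prec_u RL\cdots=h(\e')$).

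The correct bookkeeping, which is exactly what the paper's displayed computation encodes in block form, is that the two effects compensate. If the common prefix ends in $0$ (the case the paper writes out, prefix $1^{a_1}0^{b_1}\cdots1^{a_s}0^{b_s}$), the image prefix contains an even number $2s$ of $R$'s and the divergence is $L$ versus $R$, so $h(\e)\prec_u h(\e')$; if the common prefix ends in $1$, the image prefix contains an odd number $2s-1$ of $R$'s and the divergence is $R$ versus $L$, so again $h(\e)\prec_u h(\e')$. Your reduction of the equivalence to a single implication and your remark that $h$ is local (position $i$ of the image depends only on $\e_{i-1}\e_i$, so the images agree along the common prefix) are both fine; but since the core case analysis is misstated and left unverified, the proposal does not constitute a proof.
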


\begin{proof}
Essentially this claim can be found in \cite{Cosnard}, but for the
reader's convenience we will give a sketch of the proof. Let
\begin{align*}
\e&=1^{a_1}0^{b_1}\dots 1^{a_s}0^{b_s}0\dots,\\
\e'&=1^{a_1}0^{b_1}\dots 1^{a_s}0^{b_s}1\dots.
\end{align*}
Then
\begin{align*}
h(\e)&=RL^{a_1-1}RL^{b_1-1}\dots RL^{a_s-1}RL^{b_s-1}L\dots \\
h(\e')&=RL^{a_1-1}RL^{b_1-1}\dots RL^{a_s-1}RL^{b_s-1}R\dots,
\end{align*}
whence $h(\e)\prec_u h(\e')$. The other cases are similar.
\end{proof}
\end{rmk}

\medskip\noindent
{\bf Open problem.} Let $\Omega=\{\p_1,\dots,\p_m\}$ be points in
$\mathbb R^d$ and let $S_\Omega(\be)$ denote the set of
``$\be$-expansions'', where the ``digits'' are taken from the set
$\Omega$. More precisely, put, for $\be>1$,
\[
S_\Omega(\be)=\left\{(\be-1)\sum_{n=1}^\infty \be^{-n}{\bm a}_n \mid
\bm a_n \in \Omega\right\}.
\]
Clearly, $S_\Omega(\be)$ is a subset of the convex hull of $\Omega$,
and each $\bm x\in S_\Omega(\be)$ has at least one {\em address}
$(\bm a_1,\bm a_2,\dots)\in\Omega^{\mathbb N}$. Similarly to our
setting, one can define the set of points which have a unique
address and enquire about possible periods for such points.

In \cite[Section~4]{Sid3} the third author studied the case $d=2,
m=3$ (with noncollinear points $\p_1,\p_2,\p_3$) and showed that the
first period to appear is period~$3$. It is also easy to show that
the last period to appear is period~$2$, so we have a reverse
Sharkovski\u{\i} order here -- at least at the endpoints. Other
periods are much harder to deal with though, because of the holes in
$S_\Omega$, and it is not even clear whether $U_n$ in this model is
an interval for each $n$.

Obtaining a direct analogue of the Sharkovski\u{\i} theorem for the
shift on the set of unique addresses would be intriguing.

\medskip\noindent
{\bf Acknowledgement.} The authors are indebted to Henk Bruin, Paul
Glendinning and Sebastian van Strien for fruitful discussions and
general insight into one-dimensional continuous dynamics.

\end{document}